\newcommand{\PP}{\mathbb{P}}
\renewcommand{\AA}{\mathbb{A}}
\newcommand{\C}{\mathbb{C}}
\newcommand{\QQ}{\mathbb{Q}}
\newcommand{\Z}{\mathbb{Z}}
\newcommand{\ZZ}{\mathbb{Z}}
\newcommand{\sA}{\mathcal{A}}
\newcommand{\sB}{\mathcal{B}}
\newcommand{\sC}{\mathcal{C}}
\newcommand{\sF}{\mathcal{F}}
\newcommand{\sI}{\mathcal{I}}
\newcommand{\sL}{\mathcal{L}}
\newcommand{\OO}{\mathcal{O}}
\newcommand{\sS}{\mathcal{S}}
\newcommand{\sT}{\mathcal{T}}
\newcommand{\sU}{\mathcal{U}}
\newcommand{\sV}{\mathcal{V}}
\newcommand{\sW}{\mathcal{W}}
\newcommand{\sX}{\mathcal{X}}
\newcommand{\sY}{\mathcal{Y}}
\newcommand{\p}{\mathfrak{p}}
\newcommand{\Cor}{\operatorname{\mathbf{Cor}}}
\newcommand{\hA}{\ol{A}}
\newcommand{\hB}{\ol{B}}
\newcommand{\hC}{\ol{C}}
\newcommand{\hS}{\ol{S}}
\newcommand{\hT}{\ol{T}}
\newcommand{\hU}{\ol{U}}
\newcommand{\hV}{\ol{V}}
\newcommand{\hW}{\ol{W}}
\newcommand{\hX}{\ol{X}}
\newcommand{\hY}{\ol{Y}}
\newcommand{\iS}{S^\circ}
\newcommand{\iT}{T^\circ}
\newcommand{\iW}{W^\circ}
\newcommand{\iX}{X^\circ}
\newcommand{\iY}{Y^\circ}
\newcommand{\mA}{A^\infty}
\newcommand{\mB}{B^\infty}
\newcommand{\mC}{C^\infty}
\newcommand{\mS}{S^\infty}
\newcommand{\mT}{T^\infty}
\newcommand{\mU}{U^\infty}
\newcommand{\mV}{V^\infty}
\newcommand{\mW}{W^\infty}
\newcommand{\mX}{X^\infty}
\newcommand{\mY}{Y^\infty}
\newcommand{\ul}[1]{{\underline{#1}}}
\newcommand{\PSh}{{\operatorname{\mathbf{PSh}}}}
\newcommand{\Shv}{{\operatorname{\mathbf{Shv}}}}
\newcommand{\DM}{\operatorname{\mathbf{DM}}}
\newcommand{\ulMDM}{\operatorname{\mathbf{\underline{M}DM}}}
\newcommand{\ulMDA}{\operatorname{\mathbf{\underline{M}DA}}}
\newcommand{\Frac}{\operatorname{Frac}}
\newcommand{\Spec}{\operatorname{Spec}}
\newcommand{\uSpec}{\operatorname{\underline{Spec}}}
\newcommand{\Sm}{\operatorname{\mathbf{Sm}}}
\newcommand{\Ab}{\operatorname{\mathbf{Ab}}}
\newcommand{\tr}{{\operatorname{tr}}}
\newcommand{\eff}{{\operatorname{eff}}}
\newcommand{\fin}{{\operatorname{fin}}}
\newcommand{\red}{{\operatorname{red}}}
\newcommand{\Zar}{{\operatorname{Zar}}}
\newcommand{\Nis}{{\operatorname{Nis}}}
\newcommand{\et}{{\operatorname{\acute{e}t}}}
\newcommand{\fppf}{{\operatorname{fppf}}}
\newcommand{\nc}{{\operatorname{nc}}}
\newcommand{\sk}{{\operatorname{sk}}}
\newcommand{\cosk}{{\operatorname{cosk}}}
\renewcommand{\lim}{\operatornamewithlimits{\varprojlim}}
\newcommand{\colim}{\operatornamewithlimits{\varinjlim}}
\newcommand{\ol}{\overline}
\newcommand{\car}{\operatorname{char}}
\renewcommand{\epsilon}{\varepsilon}
\newcommand{\Bl}{{\mathbf{Bl}}}
\newcommand{\ulM}{\operatorname{\mathrm{\underline{M}}}}
\newcommand{\ulMZar}{\operatorname{\mathrm{\underline{M}Zar}}}
\newcommand{\ulMNis}{\operatorname{\mathrm{\underline{M}Nis}}}
\newcommand{\ulMet}{\operatorname{\mathrm{\underline{M}\acute{e}t}}}
\newcommand{\ulMfppf}{\operatorname{\mathrm{\underline{M}fppf}}}
\newcommand{\bcube}{{\ol{\square}}}
\newcommand{\ulPSm}{\operatorname{\mathbf{\underline{P}Sm}}}
\newcommand{\ulPSch}{\operatorname{\mathbf{\underline{P}Sch}}}
\newcommand{\ulPSCH}{\operatorname{\mathbf{\underline{P}SCH}}}
\newcommand{\ulMSm}{\operatorname{\mathbf{\underline{M}Sm}}}
\newcommand{\ulMSch}{\operatorname{\mathbf{\underline{M}Sch}}}
\newcommand{\ulMSCH}{\operatorname{\mathbf{\underline{M}SCH}}}
\newcommand{\ulMCor}{\operatorname{\mathbf{\underline{M}Cor}}}
\newcommand{\ulMO}{\ul{M}\OO}
\newcommand{\RMO}{\mathbf{\ul{M}\OO}}
\newcommand{\Comp}{\operatorname{\mathbf{Comp}}}
\newcounter{spec}
{\end{list}}%
\newtheorem{theorem}{Theorem}
\newtheorem{corollary}[theorem]{Corollary}
\newtheorem{lemma}{Lemma}[section]
\newtheorem{lemm}[lemma]{Lemma}
\newtheorem{theo}[lemma]{Theorem}
\newtheorem*{thm*}{Theorem}
\newtheorem{prop}[lemma]{Proposition}
\newtheorem{cor}[lemma]{Corollary}
\theoremstyle{definition}
\newtheorem{defi}[lemma]{Definition}
\newtheorem{nota}[lemma]{Notation}
\newtheorem{reco}[lemma]{Recollection}
\newtheorem{rema}[lemma]{Remark}
\newtheorem{exam}[lemma]{Example}
\theoremstyle{remark}
\numberwithin{equation}{section}
\def\Comp{\Comp^{\fin}}
\def\ulMSm{\operatorname{\mathbf{\ul{M}Sm}}}
\def\Comp{\operatorname{\mathbf{Comp}}}
\begin{document}

\title{Hodge cohomology with a ramification filtration, I}
\date{\today}
\author[S. Kelly]{Shane Kelly}
\address{
Graduate School of Mathematical Sciences
University of Tokyo
3-8-1 Komaba Meguro-ku
Tokyo 153-8914, Japan
}
\email{shanekelly@g.ecc.u-tokyo.ac.jp}
\author[H. Miyazaki]{Hiroyasu Miyazaki}
\address{NTT Corporation, NTT Institute for Fundamental Mathematics, 3-1 Morinosato-Wakamiya,Atsugi,Kanagawa 243-0198, Japan}
\email{hiroyasu.miyazaki.ah@hco.ntt.co.jp}
\thanks{
The first author was supported by JSPS KAKENHI Grant (19K14498). %
The second author is supported by JSPS KAKENHI Grant (21K13783).
}

\begin{abstract}
We consider a filtration on the cohomology of the structure sheaf indexed by (not necessarily reduced) divisors ``at infinity''. We show that the filtered pieces have transfers morphisms, fpqc descent, and are so called cube invariant. 

In the presence of resolution of singularities and weak factorisation they are invariant under blowup ``at infinity''. As such, they lead to a realisation functor from Kahn, Miyazaki, Saito and Yamazaki's category of motives with modulus over a characteristic zero base field.
\end{abstract}


\maketitle

\begin{center}
\today
\end{center}

\vspace{1cm}


\section{Introduction} \label{sec:intro}

In his celebrated work \cite{VoeTri}, Voevodsky constructed the triangulated category of mixed motives $\DM^{\eff}_k$ over a field $k$.
In the series of papers \cite{kmsy1}, \cite{kmsy2}, \cite{kmsy3}, Kahn, Miyazaki, Saito and Yamazaki define and study a triangulated category $\ulMDM^{\eff}_k$ which contains Voevodsky's category $\DM^{\eff}_k$ as a full subcategory.
One of their motivations is to obtain a motivic framework where one can study various non-$\AA^1$-invariant cohomology. An example of such a cohomology is the coherent cohomology $H^i_{\Zar}(-, \OO)$ of the structure sheaf $\OO$.
Indeed, $\OO$ is represented by $\AA^1$ which is contractible in $\DM^{\eff}_k$ by definition.

Somewhat surprisingly, it has be unknown for a long time whether the most obvious non-$\AA^1$-invariant cohomology theory $H^i_{\Zar}(-, \OO)$ is representable in $\ulMDM_k^\eff$ or not. In this paper we show that it is, at least over any field of characteristic $0$ (see Cor.~\ref{cor:main} below). 
As a consequence, we observe another fact, also surprisingly unknown for a long time, that $\ulMDM_k^\eff$ is strictly larger than $\DM_k^\eff$.

In fact, we will represent a suitable \emph{filtration} of $H^i_{\Zar}(-, \OO)$ graded by divisors ``at infinity'': 
for any choice of open immersion $X \subseteq \hX$ and invertible sheaf of ideals $\sI \subseteq \OO_{\hX}$ whose vanishing locus $\mX = \uSpec(\OO_{\hX}/\sI)$ satisfies $X = \hX \setminus \mX$, 
we can consider the image of the morphism\footnote{The strange use of $\sqrt{\sI}$ is motivated by the indexing of certain filtrations appearing in class field theory.} 
\[ H^q ((\hX,\mX),\ulMO):=H^q_{\Zar}(\hX, \sqrt{\sI} \otimes \sI^{\otimes -1}) \to H^q_{\Zar}(X, \OO_X). \]
In this way, we obtain a filtration on the cohomology of $X$ indexed by the multiplicity of effective Cartier divisors.
Moreover, one can prove that this filtration is exhaustive\footnote{All Nisnevich sheaves with transfers are canonically equipped with such so called ``motivic'' filtrations, and in the case this filtration is exhaustive the sheaf is called a \emph{reciprocity sheaf}, \cite{KSY16}, \cite{BKS22}, \cite{RS18}, \cite{SerreAGCF}.}:
\[
\colim_{n \geq 1} H^q ((\hX,n\cdot \mX),\ulMO) \xrightarrow{\sim} H^q_{\Zar}(X, \OO_X).
\]

As such, instead of the smooth varieties which generate $\DM^{\eff}_k$, the category $\ulMDM^{\eff}_k$ is generated by \emph{modulus pairs}. A modulus pair can be defined as a pair $\sX = (\hX, \mX)$ such that $\hX$ is a variety, $\mX \subseteq \hX$ is a closed subscheme, and $\iX := \hX \setminus \mX$ is smooth,\footnote{It is traditional to ask that the total space $\hX$ be normal, and $\mX$ be an effective Cartier divisor, however this can always be achieved by blowup and normalisation, two operations which we formally invert anyway.} %
In the same way that $M(X) \in \DM^{\eff}_k$ represents the cohomology of $X$, the object $M(\sX) \in \ulMDM^{\eff}_k$ represents a filtered piece of the cohomology of $\iX$, namely the cohomology with ramification bounded by $\mX$.

\ 

Our main theorem is the following.

\begin{theorem}[{Theorem~\ref{theo:MORealisation}}] \label{theo:main}
Let $k$ be a field
of characteristic zero. 
there exists an object $\RMO \in \ulMDM^\eff_k$ 
such that for any smooth variety $\hX$, any effective Cartier divisor $\mX$ with normal crossings support, and any $n \in \ZZ$ we have
\[
\hom_{\ulMDM^{\eff}_{k}} (M(\sX), \RMO[n]) \cong H_{\Zar}^{n}\left (\hX, \OO(\mX {-} |\mX|)\right ).
\]
where $\OO(\mX {-} |\mX|)$ is the line bundle associated to the divisor $\mX {-} |\mX|$.
\end{theorem}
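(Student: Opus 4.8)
The plan is to produce $\RMO$ as the object of $\ulMDM^\eff_k$ attached --- via the representability and realisation machinery of Kahn--Miyazaki--Saito--Yamazaki \cite{kmsy2,kmsy3} --- to the complex of presheaves on modulus pairs
\[
\ulMO\colon\ \sX=(\hX,\mX)\ \longmapsto\ R\Gamma_{\Zar}\bigl(\hX,\ \sqrt{\sI}\otimes\sI^{\otimes-1}\bigr),
\]
where $\sI\subseteq\OO_{\hX}$ is the invertible ideal with $\mX=\uSpec(\OO_{\hX}/\sI)$; when $\hX$ is smooth and $\mX$ has normal crossings support this coincides with $R\Gamma_{\Zar}(\hX,\OO(\mX-|\mX|))$. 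By the general criterion (in the form enhanced by resolution of singularities and weak factorisation, under which the associated object depends only on the admissible-blowup class of $\sX$), it suffices to equip $\ulMO$ with the following structures and prove the listed properties: (i) an action of finite modulus correspondences, i.e.\ a lift to $\ulMPST$; (ii) Nisnevich --- indeed fpqc --- descent; (iii) cube invariance, that $p_1^{*}\colon\ulMO(\sX)\to\ulMO(\sX\otimes\bcube)$ is a quasi-isomorphism for $\bcube=(\PP^1,\infty)$; (iv) semipurity; and (v) invariance under admissible blowups. Granting (i)--(v), the resulting object $\RMO\in\ulMDM^\eff_k$ satisfies $\hom_{\ulMDM^\eff_k}(M(\sX),\RMO[n])\cong H^n_{\Nis}(\sX,\ulMO)$, and as $\sqrt{\sI}\otimes\sI^{\otimes-1}$ is quasi-coherent its Nisnevich and Zariski cohomologies agree, which gives the asserted formula.

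The transfers (i) are the heart of the proof. For a finite modulus correspondence $\alpha$ from $\sX$ to $\sY$ --- a closed integral $Z\subseteq\hX\times\hY$, finite and surjective over a component of $\hX$, satisfying the modulus admissibility inequality on the normalisation $\overline{Z}^{N}$ --- I would define the transfer $\ulMO(\sY)\to\ulMO(\sX)$ as the composite of quasi-coherent pullback along $\overline{Z}^{N}\to\hY$, an explicit comparison of twisting sheaves, and the coherent trace map attached to the finite morphism $\overline{Z}^{N}\to\hX$ of normal schemes. The modulus admissibility condition relating $\mX$ and $\mY$ on $\overline{Z}^{N}$ is exactly what forces the trace to carry $\OO(\mY-|\mY|)$ into $\OO(\mX-|\mX|)$ rather than into a coarser filtered piece; this is where the reduced-support twist $\sqrt{\sI}$ enters decisively. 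Functoriality --- associativity of these transfers under composition of correspondences --- should reduce, via the projection formula for traces, to associativity of intersection products, but the bookkeeping over singular $Z$ (systematically passing to normalisations and exploiting that the twisting sheaves are reflexive) is where I expect the bulk of the technical effort, and it is the main obstacle.

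Properties (ii)--(iv) are comparatively soft. For (ii), fpqc descent for quasi-coherent cohomology is classical faithfully flat descent, and its compatibility with the modulus structure is formal. For (iii), note that $\sX\otimes\bcube=(\hX\times\PP^1,\ \mX\times\PP^1+\hX\times\{\infty\})$, and that the summand $\hX\times\{\infty\}$ enters with multiplicity one, hence cancels in the difference ``divisor minus its support''; the relevant twisting sheaf is therefore $p_1^{*}\OO(\mX-|\mX|)$, and the projection formula together with $Rp_{1*}\OO_{\hX\times\PP^1}=\OO_{\hX}$ gives cube invariance at once. This clean identity is precisely the structural reason the reduced-support correction $-|\mX|$ is the correct normalisation. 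For (iv), semipurity --- the absence of sections supported at infinity --- follows from the inclusion $\OO(\mX-|\mX|)\hookrightarrow j_{*}\OO_{\iX}$ (with $j\colon\iX\hookrightarrow\hX$), which holds because $\mX-|\mX|\ge0$.

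It remains to treat (v), the only ingredient genuinely using characteristic zero. By weak factorisation, every admissible blowup between smooth modulus pairs is dominated by a chain of blowups along smooth centres contained in the support at infinity, so it is enough to consider $f\colon\hX'\to\hX$, the blowup along a smooth $Z\subseteq|\mX|$ of codimension $r$, with exceptional divisor $E$. Writing $\mX=\sum_i n_i D_i$ and chasing definitions, $f^{*}\OO(\mX-|\mX|)$ differs from $\OO(f^{*}\mX-|f^{*}\mX|)$ by a twist $\OO(cE)$ with $c=\bigl(\sum_i\operatorname{mult}_Z D_i\bigr)-1$; since $|\mX|$ has normal crossings, $Z$ lies in the intersection of at most $r$ of its branches, so $0\le c\le r-1$, and in exactly this range $Rf_{*}\OO_{\hX'}(cE)=\OO_{\hX}$ by a local computation on the projective bundle $E\to Z$. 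The projection formula then yields $Rf_{*}\OO(f^{*}\mX-|f^{*}\mX|)\simeq\OO(\mX-|\mX|)$, i.e.\ blowup invariance; the sharpness of $c\le r-1$ again reflects that the $\sqrt{\cdot}$-normalisation is forced upon us. Assembling (i)--(v) and feeding them into the representability theorem produces $\RMO$ together with the stated isomorphism.
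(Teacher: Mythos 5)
Your outline of descent, cube invariance, and blowup invariance is essentially sound: your computation for (v) ($f^*|\mX| = |f^*\mX| + (i{-}1)E$ with $i$ the number of branches through the centre, followed by the projection formula and the vanishing of $Rf_*\OO(1{-}i)$ in that range) is exactly the paper's argument, and your slicker argument for (iii) (that $\hX\times\{\infty\}$ cancels in $D - |D|$, so the twisting sheaf is $p_1^*\OO(\mX-|\mX|)$) is valid for normal crossings pairs, which is all that is needed; the paper instead proves cube invariance by an explicit ring-theoretic Mayer--Vietoris argument valid for any reduced total space. One caveat on (ii): $\ulMO$ is \emph{not} quasi-coherent for the flat topology (cf.\ Example~\ref{exam:notFpqcCoh}: for $(\C[[t^2]],t^4)\to(\C[[t]],t^4)$ the base-change map is $t^{-2}\C[[t]]\to t^{-3}\C[[t]]$), so you cannot simply invoke faithfully flat descent of quasi-coherent modules; a direct argument as in Proposition~\ref{prop:fpqc} is required, though quasi-coherence as an \emph{\'etale} sheaf does hold and suffices for the Zariski/Nisnevich comparison you need.

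The genuine gap is (i), and you have in effect flagged it yourself: you propose to build transfers from scratch via coherent trace maps along $\overline{Z}^N \to \hX$ and then admit that associativity under composition of correspondences is "the main obstacle" and is left unresolved. As written, this is not a proof --- without functoriality you do not have an object of $\PSh(\ulMCor_k)$, and verifying associativity of ad hoc trace maps over singular correspondences is precisely the kind of computation that Suslin--Voevodsky's machinery exists to avoid. The paper takes a different and much shorter route: $\OO$ already carries transfers as a presheaf on $\Cor_k$ (Recollection~\ref{reco:Cor}), so one only needs to check that for an admissible $\alpha \in \hom_{\ulMCor_k}(\sX,\sY)$ the existing map $\alpha^*:\OO(\iY)\to\OO(\iX)$ carries the subgroup $\ulMO(\sY)$ into $\ulMO(\sX)$. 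This is done by choosing a proper surjective $\hW\to\hX$, finite on interiors, over which $\alpha$ becomes a finite sum of genuine morphisms of modulus pairs $\sW\to\sY$ (so preservation of $\ulMO$ is clear there), and then descending via the Cartesian square $\ulMO(\sX) = \ulMO(\sW)\times_{\OO(\iW)}\OO(\iX)$ of Proposition~\ref{prop:hdescent}, which rests on the valuative characterisation of $\ulMO$ over normal total spaces (Lemmas~\ref{lemm:memCritMO} and~\ref{lemm:intersectHeightOne}). Functoriality is then inherited for free. I would also note that your final assembly quietly needs the identification of $\ulMNis$-cohomology with the colimit of Nisnevich cohomology over abstract admissible blowups (Proposition~\ref{prop:MZarIscolimZar}), which is a nontrivial hypercovering argument rather than part of an off-the-shelf "representability criterion".
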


As an immediate corollary of Theorem~\ref{theo:main}, we obtain 

\begin{corollary} \label{cor:main}
For any $X \in \Sm_k$, by taking $\sX = (X,\varnothing)$ in Theorem~\ref{theo:main}, we have a representation of the Hodge cohomology groups
\[
\hom_{\ulMDM^{\eff}_{k}} (M(X,\varnothing), \RMO[n]) \cong H_{\Zar}^{n} (X, \OO ).
\]
\end{corollary}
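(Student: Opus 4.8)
The plan is to simply specialise Theorem~\ref{theo:main} to the empty (equivalently, zero) divisor. First I would verify that $\sX = (X, \varnothing)$ lies in the scope of that theorem: since $X \in \Sm_k$ it is in particular a smooth variety, the zero divisor $\varnothing$ is an effective Cartier divisor (locally it is cut out by the unit section of $\OO_X$), and its support is empty, hence vacuously of normal crossings type. Thus $(X, \varnothing)$ is a modulus pair of the required form, and $M(X, \varnothing) \in \ulMDM^\eff_k$ is a legitimate object on which to evaluate $\RMO$.

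Next I would identify the line bundle appearing on the right-hand side. Taking $\mX = \varnothing$, the reduced support $|\mX|$ is again the zero divisor, so $\mX - |\mX| = 0$ and $\OO(\mX - |\mX|) = \OO(0) \cong \OO_X$. Substituting $\hX = X$ and $\mX = \varnothing$ into the isomorphism of Theorem~\ref{theo:main} then gives, for every $n \in \ZZ$,
\[
\hom_{\ulMDM^{\eff}_{k}} (M(X, \varnothing), \RMO[n]) \;\cong\; H^n_{\Zar}(X, \OO(0)) \;=\; H^n_{\Zar}(X, \OO_X),
\]
which is exactly the assertion of Corollary~\ref{cor:main}.

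There is essentially no obstacle here: the only points to check are the entirely formal compatibilities above, namely that the definition of modulus pair and the twisting convention $\OO(\mX - |\mX|)$ specialise correctly on the zero divisor. I would also remark that no compactification of $X$ is needed, since Theorem~\ref{theo:main} already permits an arbitrary — not necessarily proper — smooth variety $\hX$; hence the single object $\RMO$ represents the Hodge cohomology of smooth $k$-varieties directly, which is the content the corollary is designed to highlight.
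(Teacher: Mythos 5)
Your proposal is correct and matches the paper, which treats the corollary as an immediate specialisation of Theorem~\ref{theo:main} to $\mX = \varnothing$ (so that $\OO(\mX - |\mX|) = \OO_X$ and the normal crossings hypothesis holds vacuously). The checks you spell out are exactly the formal compatibilities the paper leaves implicit.
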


And of course:

\begin{corollary}
The canonical fully faithful inclusion 
\[ \DM^{\eff}_k \subseteq \ulMDM^{\eff}_k \]
 is not essentially surjective. 
\end{corollary}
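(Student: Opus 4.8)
The plan is to use the object $\RMO \in \ulMDM^\eff_k$ produced by Theorem~\ref{theo:main} as an explicit witness lying outside the essential image of the inclusion $\iota\colon \DM^\eff_k \hookrightarrow \ulMDM^\eff_k$. Suppose, towards a contradiction, that there is an object $N \in \DM^\eff_k$ with $\iota(N) \cong \RMO$. Recall that $\iota$ is the canonical inclusion of \cite{kmsy3}, which is fully faithful and normalised so that $\iota(M(X)) \cong M(X,\varnothing)$ for $X \in \Sm_k$. Combining this with Corollary~\ref{cor:main} taken in degree $n = 0$, we obtain for every $X \in \Sm_k$ an isomorphism
\[
H^0_{\Zar}(X, \OO) \;\cong\; \hom_{\ulMDM^\eff_k}(M(X, \varnothing), \RMO) \;\cong\; \hom_{\ulMDM^\eff_k}(\iota M(X), \iota N) \;\cong\; \hom_{\DM^\eff_k}(M(X), N),
\]
and this composite is natural in $X$, the first isomorphism by naturality of the representation in Theorem~\ref{theo:main}, the second by construction of $\iota$, and the third by full faithfulness of $\iota$.

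The next step is to evaluate both sides on the structure projection $p\colon \AA^1_k \to \Spec k$, viewed as a morphism in $\Sm_k$. On the right-hand side, $p$ induces the isomorphism $M(\AA^1_k) \iso M(\Spec k)$ in $\DM^\eff_k$ (a defining feature of $\DM^\eff_k$), so $\hom_{\DM^\eff_k}(M(p), N)$ is an isomorphism; in other words the functor $X \mapsto \hom_{\DM^\eff_k}(M(X), N)$ is $\AA^1$-invariant. On the left-hand side, however, $p$ induces the restriction map $k = \OO(\Spec k) \to \OO(\AA^1_k) = k[t]$, i.e.\ the inclusion of constant functions, which is not surjective and hence not an isomorphism. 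This contradicts the naturality of the displayed isomorphism. Therefore no such $N$ exists, $\RMO$ is not in the essential image of $\iota$, and $\iota$ is not essentially surjective.

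The argument is a formal consequence of Theorem~\ref{theo:main} once one grants two entirely standard inputs: that the inclusion of \cite{kmsy3} carries $M(X)$ to $M(X,\varnothing)$, and that every representable cohomology theory on $\DM^\eff_k$ is $\AA^1$-invariant. Accordingly I do not expect a genuine obstacle; the only point requiring a little care is to confirm that the representation isomorphism of Theorem~\ref{theo:main} is natural in the modulus pair, so that it is compatible with pullback along $p$ — but this is automatic from the way $\RMO$ represents the functor. (If one wished to avoid degree $0$, where the ``cohomology'' is merely global sections, the same argument runs in degree $1$ with a smooth projective curve of positive genus in place of $\Spec k$; but the degree-$0$ version is both sufficient and cleanest.)
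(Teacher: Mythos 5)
Your overall strategy --- exhibiting $\RMO$ as a witness, using that every object in the image of $\DM^\eff_k$ represents an $\AA^1$-invariant theory while $\RMO$ represents $H^\bullet_{\Zar}(-,\OO)$, which is not $\AA^1$-invariant --- is exactly the argument the paper intends (the paper offers no proof beyond ``And of course'', and the introduction's remark that $\OO$ is represented by the $\DM$-contractible $\AA^1$ is precisely this point). However, one step in your chain of isomorphisms is false as stated: the canonical inclusion $\iota$ is \emph{not} normalised so that $\iota(M(X)) \cong M(X,\varnothing)$ naturally in $X \in \Sm_k$. If it were, then $X \mapsto M(X,\varnothing)$ would factor through $\DM^\eff_k$ and hence send $\AA^1_k \to \Spec k$ to an isomorphism, giving $M(\AA^1_k,\varnothing) \cong \ZZ$ in $\ulMDM^\eff_k$; combined with Corollary~\ref{cor:main} this would yield $k[t] \cong k$. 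So the normalisation you invoke is refuted by the very non-$\AA^1$-invariance you are exploiting, and the identification $\hom_{\ulMDM^{\eff}_k}(M(X,\varnothing),\RMO) \cong \hom_{\ulMDM^{\eff}_k}(\iota M(X), \iota N)$ breaks down already for $X = \AA^1_k$. (Indeed no functor sending $M(X)$ to $M(X,\varnothing)$ can even be well defined on $\DM^\eff_k$, which is obtained by killing the complexes $M(X\times \AA^1) \to M(X)$.)

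The repair is immediate and does not change the shape of your argument: the inclusion of \cite{kmsy3} is the \emph{right adjoint} $\ulomega^{\eff}$ of the functor $\ulomega_{\eff}\colon \ulMDM^\eff_k \to \DM^\eff_k$ determined by $M(\hX,\mX) \mapsto M(\iX)$. Hence for any $N \in \DM^\eff_k$ and any modulus pair $\sX$, adjunction alone gives
\[
\hom_{\ulMDM^{\eff}_k}(M(\sX), \ulomega^{\eff}N) \;\cong\; \hom_{\DM^{\eff}_k}(M(\iX), N),
\]
which depends only on the interior and is therefore $\AA^1$-invariant in $X$ when $\sX = (X,\varnothing)$. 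Comparing with the non-$\AA^1$-invariant functor $X \mapsto H^0_{\Zar}(X,\OO)$ via Corollary~\ref{cor:main}, evaluated on $\AA^1_k \to \Spec k$ exactly as you do (the pullback $k \hookrightarrow k[t]$ is not surjective), gives the contradiction. With this one substitution --- adjunction in place of the false normalisation --- your proof is correct and is the paper's intended argument.
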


The strategy can be summarised as follows. We define $\ulMO$ on $\ulPSm_k$ (recalled in \S \ref{sec:basic-def}) and show that it is a quasi-coherent {\'e}tale sheaf, \S \ref{sec:ulMO}, then show that its Nisnevich fibrant replacement is blowup invariant, \S \ref{sec:MObu}, cube invariant, \S \ref{sec:MOcube}, and has transfers, \S \ref{sec:MOtransfers}. 

We begin in \S \ref{sec:basic-def} with a recollection of the general theory, and in particular the construction of $\ulMDM_k^{\eff}$. 
In Appendix \ref{appendix-a}, we collect some definitions and facts about resolution of singularities and weak factorisations. 
In Appendix \ref{sec:MtauCoh}, we give a self-contained proof that $\ulMZar$ (resp. $\ulMet$) cohomology can be calculated as the colimit of Zariski cohomology (resp. \'etale cohomology) over abstract admissible blowups. In Appendix \ref{sec:cohCohCalc}, we make some basic computation of cohomologies on projective spaces generalising classical computations in SGA6 \cite{SGA6}.

In future work, this paper's results and techniques will be used to develop the analogue of Corollary~\ref{cor:main} for $H^p_{\Zar}(X, \Omega^q_X)$, as well as Hochschild homology with modulus satisfying an HKR isomorphism.

\emph{Related work.} In \cite{BPO22} a Hodge-type realization with log poles is constructed which should compare to the realisation constructed in this paper in case of reduced divisor or in case there is no divisor at infinity (that is, the case of tame ramification or the case of arbitrary ramification with no pole restriction). 

There is also work in progress by Marco D'Addezio, \cite{Dad23}, who studies the $\ulMZar$-sheafification of $(\hX, \mX) \mapsto \Gamma(\hX, \OO_{\hX})$ (although he writes ``simply marked schemes'' for modulus pairs and ``$v$-Zariski'' for $\ulMZar$). 

\subsection*{Acknowledgements}
We thank Dan Abramovich for clarifications about Definition~\ref{defi:NC}. 

We thank Ofer Gabber for bringing our attention to Example~\ref{exam:counterGabber}. 
Also, we thank Shuji Saito pointing out that the transfers version of our main theorem was possible; we were originally proving the result only for $\ulMDA_k^{\eff}$.

\section{Review of the general theory} \label{sec:basic-def}

In this section, we recall basic definitions concerning the category of modulus pairs, Rec.\ref{reco:1}, modulus topologies, Rec.\ref{reco2}, finite correspondences in the modulus setting, Rec.\ref{reco:Cor}, and the construction of $\ulMDM^{\eff}_k$, Rec.\ref{reco:MDM}.  One can find more details in many places: \cite{kmsy1, kmsy2, kmsy3}, \cite[\S 1]{nistopmod}, \cite[Chap.5, Chap.6]{KM21}. 

We fix a perfect base field $k$ with the case of interest being $\car(k) = 0$. We restrict our attention to modulus pairs over $k$ with smooth interior so as not to frighten the reader, but a large part of what we write holds over general bases, cf. \cite{KM21}.

\begin{reco}[Modulus pairs]\ \label{reco:1}
\begin{enumerate}
 \item A \emph{modulus pair}\footnote{We follow the terminology from  \cite[Def.1.1.1]{kmsy1}. There is a more general version studied in \cite{KM21} where $\hX$ is qc separated, and $\iX = \hX \setminus \mX$ is Noetherian.} over $k$ is a pair $$\sX = (\hX, \mX)$$ such that 
 \begin{enumerate}
  \item $\hX$ (called the \emph{total space}) is a separated $k$-scheme of finite type, 
  \item $\mX \subseteq \hX$ (called the \emph{modulus})  is an effective Cartier divisor, and 
  \item $\iX := \hX \setminus \mX$ (called the \emph{interior}) is smooth.
 \end{enumerate}
 
 \item An \emph{ambient $k$-morphism} $(\hX, \mX) \to (\hY, \mY)$ of modulus pairs is a $k$-morphism $f: \hX \to \hY$ of the underlying schemes, such that $\mX \geq f^*\mY$.\footnote{So for example, there is a tower of morphisms 
 \[ 
 \dots \to (\AA^1, (t^{n+1})) \to (\AA^1, (t^n)) \to \dots. 
 \]
 reflecting the fact that $k[t, t^{-1}] = \colim_n t^{1-n}k[t]$, cf.Example~\ref{exam:ulMOsncd}.}

 \item $\ulPSm_k$ is the category formed by modulus pairs over $k$, together with ambient $k$-morphisms.
 
 \item  $\ulMSm_k$ is the \emph{category of modulus pairs over $k$}. It is constructed by formally inverting the class $
\Sigma$ of \emph{abstract admissible blowups}:\footnote{So it is something like a ``global'' version of Raynaud's approach to rigid analytic spaces.} i.e., those ambient morphisms $f: (\hX, \mX) \to (\hY, \mY)$ such that 
\begin{enumerate}
 \item $\hX \to \hY$ is proper, 
 \item $\mX = f^*\mY$, and 
 \item $\iX \to \iY$ is an isomorphism. 
 \end{enumerate}
In symbols, 
\[ \ulMSm_k := \ulPSm_k[\Sigma^{-1}]. \]
It is shown in \cite[Prop.1.7.2]{kmsy1} (cf. also \cite[Prop.1.21]{KM21}) that $\Sigma$ admits a right calculus of fractions, so 
\[ \hom_{\ulMSm_k}(\sY, \sX) = \varinjlim_{\sY' \to \sY \in \Sigma} \hom_{\ulPSm_k}(\sY', \sX) \]
and the colimit is filtered; in fact its indexing category is a filtered poset. In particular every morphism in $\ulMSm_k$ can be written in the form $f \circ s^{-1}$ where $s \in \Sigma$ and $f$ is ambient. 

 \item \label{enum:PSmFib} The category $\ulPSm_k$\footnote{Of course if we insist on working in $\ulPSm_k$ we also need  $\iT \times_{\iS} \iX$ to be smooth over $k$, but in general the pullback along a minimal morphism basically always exists in the larger category of modulus pairs.} has categorical fibre products $\sY = \sT \times_{\sS} \sX$ in the case $f: \sT \to \sS$ is \emph{minimal}, i.e., in the case $\mT = j^*\mS$, \cite[Lem.1.32, Prop.1.33]{KM21}.
 \begin{enumerate}
  \item If $f$ is flat, $\hY = \hT \times_{\hS} \hX$ and $\mY = \mX|_{\hY}$. 
  \item If $f$ is an abstract admissible blowup, $\hY$ is the strict transform of $\hX$.
  \item For a general minimal $f$, the total space $\hY$ is the scheme theoretic closure of $\iT \times_{\iS} \iX$ in $\hT \times_{\hS} \hX$. 
 \end{enumerate}
 
 \item The category $\ulMSm_k$ admits all\footnote{Again, if we insist on working in $\ulMSm_k$ then we also need $\iT \times_{\iS} \iX$ to be smooth over $k$, but in general the large category of modulus pairs admits all fibre products.} categorical fibre products $\sY = \sT \times_{\sS} \sX$, \cite[Thm.1.40]{KM21}. The canonical functor $\ulPSm_k \to \ulMSm_k$ preserves the fibre products in item \eqref{enum:PSmFib}.
\end{enumerate}
\end{reco}

\begin{rema} \label{rema:sigmaSheaves}
By the universal property of localisation, the category $\PSh(\ulMSm_k)$ is canonically identified with the full subcategory of $\PSh(\ulPSm_k)$ consisting of those presheaves which send abstract admissible blowups to isomorphisms. Since abstract admissible blowups are categorical monomorphisms in $\ulPSm_k$, this is precisely the category of sheaves for the topology on $\ulPSm_k$ generated by abstract admissible blowups. In symbols, $\PSh(\ulMSm_k) = \Shv_\Sigma(\ulPSm_k)$, {cf.\cite[\S A.1]{KM21}}.
\end{rema}

\begin{reco}[Modulus topologies] \label{reco2}\ 
\begin{enumerate}
 \item The \emph{Zariski topology} on $\ulPSm_k$ is generated by families 
\begin{equation} \label{equa:zarcov}
\{f_i: (\hU_i, \mU_i) \to (\hX, \mX)\}_{i \in I}
\end{equation}
of minimal morphisms such that 
$\{\hU_i \to \hX_i\}_{i \in I}$ is a Zariski covering in the classical sense. Zariski coverings on $\ulPSm_k$ form a pretopology in the sense of \cite[Expos\'e II]{SGA41}. 

 \item The \emph{\underline{M}Zariski topology} or \emph{$\ulMZar$-topology} on $\ulMSm_k$ is generated by images of Zariski coverings under the localisation functor $\ulPSm_k \to \ulMSm_k$. Zariski coverings do not form a pretopology on $\ulMSm_k$. In general the coverings of the pretopology they generate consists of various iterated compositions of abstract admissible blowups, inverses of abstract admissible blowups, and Zariski coverings. However, such families can always be refined by one of the form 
\begin{equation} \label{equa:Mzarcov}
\{\sU_i \to \sY \to \sX\}_{i \in I}
\end{equation}
where $\{\sU_i \to \sY\}$ is a Zariski covering and $\sY \to \sX$ is an abstract admissible blowup, \cite[Cor.4.21]{KM21}.

 \item The $\ulMNis$, $\ulMet$, and \emph{$\ulMfppf$} topologies are defined in the analogous way to $\ulMZar$.
 There are also more exotic topologies considered in \cite{KM21} but we do not use them here.
\end{enumerate}
\end{reco}

\begin{rema} \label{rema:sigmaZarSheaves}
By Eq.\eqref{equa:Mzarcov} and Remark~\ref{rema:sigmaSheaves} the category $\Shv_{\ulMZar}(\ulMSm_k)$ is canonically identified with the full subcategory of $\Shv_{\Zar}(\ulPSm_k)$ consisting of those sheaves which send abstract admissible blowups to isomorphisms. In symbols, one could write $\Shv_{\ulMZar}(\ulMSm_k) = \Shv_{\Sigma,\Zar}(\ulPSm_k)$. 
\end{rema}

\begin{reco}[Finite correspondences] \label{reco:Cor} \ 
\begin{enumerate}
 \item Write $\Cor_k$ for Voevodsky's category of finite correspondences, \cite{VoeTri}. Objects are smooth $k$-schemes and $\hom_{\Cor_k}(X, Y)$ is the free abelian group 
\[ \ZZ \left \{ Z\ \middle |\  \begin{array}{c} Z \subseteq X \times Y \textrm{ is an integral closed subscheme, and } \\
Z \to X \textrm{ is finite and dominates an irreducible component of } X \end{array} \right \}. \] 
There is a canonical functor $\Sm_k \to \Cor_k$ which sends a morphism $f: X \to Y$ to the graph $[f] := \sum \delta(X_i)$ where $X_i \subseteq X$ are the irreducible components of $X$ and $\delta: X \to X \times Y$ is the graph morphism $x \mapsto (x, f(x))$. %
If $f: X' \to X$ is \'etale and $\sum n_i Z_i \in \hom_{\Cor_k}(X, Y)$ any correspondence, then $\alpha \circ [f] = \sum n_i \sum Z'_{ij}$ where $Z'_{ij}$ are the irreducible components of $X' \times_X Z_i$.

 \item \label{reco:Cor:Omega} The structure presheaf $\OO$ on $\Sm_k$ which send $X$ to $\Gamma(X,\OO_X)$ has a structure of transfers in the sense that there exists $\OO_{\tr}: \Cor_k^{op} \to \Ab$ such that $\OO_{\tr}|_{\Sm_k} = \OO$, \cite{SVSing}.

 \item We write $\ulMCor_k$ for the category of modulus correspondences.\footnote{Cf.\cite{kmsy1, kmsy2, kmsy3} when the base is a field, or \cite{KM21} for general bases, where more general means modulus pairs $\sS$ such that $\hS$ is quasi-compact separated and such that $\iS$ is Noetherian.} %
Objects are the same as $\ulMSm_k$ and morphism groups can be defined as the intersections\footnote{If the reader is scared of correspondences for non-smooth schemes, we point out that the formula still if we replace $\colim_{\sW \to \sX} \hom_{\Cor_k}(\iW,\iY)$ with $\colim_{\sW \to \sX} \colim_{U \subseteq \iW} \hom_{\Cor_k}(U,\iY)$ where the $U \subseteq \iW$ are regular dense open subschemes.}
\[ \xymatrix@R=6pt{
\hom_{\ulMCor_k}(\sX,\sY) \ar@{}[r]|-\subseteq \ar@{}[dd]|-{\cap|} &  \hom_{\Cor_k}(\iX,\iY) \ar@{}[d]|-{\cap|} \\
& \colim_{\sW \to \sX} \hom_{\Cor_k}(\iW,\iY) \ar@{=}[d] \\
\colim_{\sW \to \sX} \ZZ \hom_{{\ulMSm_k}}(\sW,\sY) \ar@{}[r]|-\subseteq & \colim_{\sW \to \sX} \ZZ \hom_{{\Sm_k}}(\iW,\iY)
} \]
where the colimit is over ambient minimal morphisms $\sW \to \sX$ such that $\hW \to \hX$ is proper surjective, $\iW \to \iX$ is finite, and $\hW$ is integral, cf.\cite[Cor.4.21, Prop.4.37, Cor.5.31]{KM21}.\footnote{In \cite[Cor.5.31]{KM21}, qfh-sheafifications are used instead of the colimits, but one sees that this is the same using arguments such as \cite[Prop.3.3.1]{Voe96}.} 
More explicitly, for $\iX$ integral, a correspondence $\alpha: \iX \to \iY$ belongs to $\ulMCor_k (\sX,\sY)$ if and only if there exists a proper surjective morphism $\hW \to \hX$ with $\iW \to \iX$ finite and $\hW$ integral, and a finite sum $\sum n_i f_i$ of morphisms\footnote{The  category $\ulMSch_k$ of not necessarily smooth modulus pairs is defined in the analogous way: objects are pairs $(\hX, \mX)$ with $\hX$ separated and finite type over $k$, and the modulus $\mX$ is an effective Cartier divisor. Ambient morphisms $(\hX, \mX) \to (\hY, \mY)$ are those $k$-morphisms $\hX \to \hY$ such that $\mX \geq \mY|_{\hX}$. Admissible blowups are morphisms $(\hX, \mX) \to (\hY, \mY)$ such that $\hX \to \hY$ is proper, $\mX = \mY|_{\hX}$ and $\iX = \iY$. The category $\ulMSch_k$ is obtained from $\ulPSch_k$ by inverting all admissible blowups.} $f_i: \sW \to \sY$ such that $\alpha = \sum  n_i f_i^\circ: \iW \to \iY$, where $\sW = (\hW, \hW \times_{\hX} \mX)$.

\[ \xymatrix{
\iW \ar[r] \ar[d] \ar@/^12pt/[rr]^{\sum n_i f_i^\circ} & \iX \ar[r]^\alpha \ar[d] & \iY \ar[d] \\
\sW \ar[r] \ar@/_12pt/[rr]_{\sum n_i f_i} & \sX \ar@{-->}[r] & \sY
} \]

Taking graphs induces a covariant functor $\ulMSm_k \to \ulMCor_k$. 
\end{enumerate}
\end{reco}

\begin{reco} \label{reco:MDM}\ 
\begin{enumerate}
 \item An additive presheaf on $\ulMCor_k$ is called a \emph{modulus presheaf with transfers}. 
Let $\tau \in \{\Zar, \Nis, \et\}$. 
A modulus presheaf with transfers is called a \emph{$\tau$-sheaf with transfers} if for any modulus pair $\sX$, the presheaf $(F|_{\ulMSm_k})_{\sX}$ on the small site $\hX_{\tau}$ is a $\tau$-sheaf, where   $F|_{\ulMSm_k}$ denotes the restriction via the graph functor $\ulMSm_k \to \ulMCor_k$.

One can prove that $\Shv_{\ulMNis}(\ulMCor_k)$ is a Grothendieck abelian category using the usual methods, \cite[Cor.6.8]{KM21}, i.e., by showing that the forgetful functor admits a left adjoint
\[ a_{\tr}: \PSh(\ulMCor_k) \to \Shv_{\ulMNis}(\ulMCor_k) \]
which becomes sheafification when restricted to $\ulMSm_k$. 
Moreover, by classical arguments with input from \cite{KM21},%
\footnote{Use the proof of \cite[Prop.3.1.7]{VoeTri} with \cite[Prop.6.2, Cor.6.8]{KM21} inserted at the appropriate places.} %
for any $F \in \Shv_{\ulMNis}(\ulMCor_k)$ we have
\begin{equation} \label{equa:ExtCoh}
Ext^\bullet(\ZZ_{\tr}(\sX), F) \cong H_{\ulMNis}^\bullet(\sX, F)
\end{equation}
where $\ZZ_{\tr}(\sX) = \hom_{\ulMCor_k}(-, \sX)$.

 \item In analogy with $\DM_k^{\eff}$ from \cite{VoeTri} the category $\ulMDM_k^{\eff}$ is defined to be the Verdier quotient 
\[ \ulMDM_k^{\eff} = 
\frac{D(\Shv_{\ulMNis}(\ulMCor_k))}{\biggl \langle \ZZ_{\tr}(\sX\otimes \bcube) \to \ZZ_{\tr}(\sX) \biggm | \sX \in \ulMCor_k \biggr \rangle}
 \]
by the two term complexes $[\ZZ_{\tr}(\sX \otimes \bcube) \to \ZZ_{\tr}(\sX)]$ where $\sX \otimes \bcube = (\hX \times \PP^1, \mX \times \PP^1 + \hX \times \{\infty\})$. 
Since the generators $\ZZ_{\tr}(\sX)$ are compact, \cite[Thm.4.47]{KM21}, the localisation functor admits a right adjoint, and $\ulMDM_k^{\eff}$ can be identified with the full subcategory 
\begin{align}
&\ulMDM_k^{\eff} \cong \label{eq:MDMsub} \\
\biggl \{ K \in &D(\Shv_{\ulMNis}(\ulMCor_k))\ |\ \mathbb{H}_{\ulMNis}^\bullet(\sX, K) \cong \mathbb{H}_{\ulMNis}^\bullet(\sX \otimes \bcube, K) \biggr \} \nonumber
\end{align}
of objects with cube invariant hypercohomology, cf Eq.\eqref{equa:ExtCoh}.
\end{enumerate}
\end{reco}

\section{The presheaf $\protect\ulMO$} \label{sec:ulMO}

\begin{defi} \label{defi:ulMORing}
If $A$ is any ring and $f \in A$ a nonzero divisor we write 
\[ \ulMO (A, f) := \{ a/f \in A[f^{-1}] : a \in \sqrt{(f)} \subseteq A \}. \]
\end{defi}

Of course, $A \subseteq \ulMO (A, f)$ with equality if $f$ is invertible, and on the other side, $\bigcup_{n \geq 0} \ulMO(A, f^n) = A[f^{-1}]$.

\begin{exam}[Cf.the proof of Lemma~\ref{lemm:flatMO}] \label{exam:ulMOsncd}
If $A = \QQ[x_1, \dots, x_n]$ and $f = x_1^{r_1}\dots x_i^{r_i}$ with $r_1, \dots, r_i > 0$ then 
\[ \ulMO (A, f) = \tfrac{1}{x_1^{r_1{-}1}\dots x_i^{r_i{-}1}} \QQ[x_1, \dots, x_n]. \]
More generally, if $A$ is a UFD, 
$f_1, \dots, f_i \in A$ are pair-wise distinct irreducible elements, and $f = f_1^{r_1} \dots f_i^{r_i}$ with $r_1, \dots, r_i > 0$ then 
\[ \ulMO (A, f) = \tfrac{1}{f_1^{r_1{-}1}\dots f_i^{r_i{-}1}}A. \]
In particular, $\ulMO (A, f)$ is free of rank one in this case.
\end{exam}

\begin{lemm} \label{lemm:affineEtQC}
Suppose $\phi: A \to B$ is a homomorphism of rings equipped with nonzero divisors $f, g$ respectively, such that $\phi(f)$ divides $g$. Then $A[f^{-1}] \to B[g^{-1}]$ induces a morphism of submodules
\[ \ulMO(A, f) \to \ulMO(B, g). \]
\end{lemm}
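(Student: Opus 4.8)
The statement is the affine incarnation of the functoriality of $\ulMO$ along ambient morphisms, and I would prove it by an elementary computation with radicals. First I would check that the map $A[f^{-1}] \to B[g^{-1}]$ in the statement is actually well defined: since $\phi(f)$ divides $g$, write $g = \phi(f)h$ with $h \in B$; then $\phi(f)$ becomes invertible in $B[g^{-1}]$, with inverse $h/g$, so the composite $A \xrightarrow{\phi} B \to B[g^{-1}]$ inverts $f$ and factors uniquely through $A[f^{-1}]$ by the universal property of localisation. This map is $A$-linear (through $\phi$), and $\ulMO(A,f) \subseteq A[f^{-1}]$, $\ulMO(B,g) \subseteq B[g^{-1}]$ are submodules, so it suffices to show that the image of $\ulMO(A,f)$ lands inside $\ulMO(B,g)$ and then restrict.

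For the main step, let $x \in \ulMO(A,f)$ and write $x = a/f$ with $a \in \sqrt{(f)}$; pick $n \geq 1$ and $c \in A$ with $a^{n} = cf$. The image of $x$ in $B[g^{-1}]$ is $\phi(a)/\phi(f) = \phi(a)h/g$, so it is enough to check that $b := \phi(a)h$ lies in $\sqrt{(g)}$. This follows from the one-line computation
\[ b^{n} = \phi(a)^{n}h^{n} = \phi(c)\,\phi(f)\,h^{n} = \phi(c)\,h^{n-1}\cdot g \in (g), \]
so that $b \in \sqrt{(g)}$ and hence $x \mapsto b/g \in \ulMO(B,g)$, as desired. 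The restriction of $A[f^{-1}] \to B[g^{-1}]$ is then the required $A$-module homomorphism $\ulMO(A,f) \to \ulMO(B,g)$.

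I do not expect a genuine obstacle here; the only two points that deserve a sentence are the well-definedness of $A[f^{-1}] \to B[g^{-1}]$ (which is exactly where the hypothesis $\phi(f)\mid g$ enters) and the fact that $\sqrt{(g)}$ is an ideal — the latter ensures both that the assignment $a/f \mapsto b/g$ is independent of the chosen representative $a/f$ and that it is additive, so that we really do get a module map and not just a set map. Everything else is the displayed computation.
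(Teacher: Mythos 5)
Your proof is correct and is essentially the paper's argument: the paper factors the map as $(A,f)\to(B,\phi(f))\to(B,g)$ and verifies $\phi(\sqrt{fA})\subseteq\sqrt{\phi(f)B}$ and $h\sqrt{fA}\subseteq\sqrt{fhA}$ separately, whereas your single computation $(\phi(a)h)^n=\phi(c)h^{n-1}g$ is exactly the composite of those two steps. The only cosmetic remark is that well-definedness and additivity are automatic once you observe (as you do) that the map is the restriction of the localisation map $A[f^{-1}]\to B[g^{-1}]$, so the closing paragraph about independence of representatives is not needed.
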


\begin{proof}
If $g = \phi(f)$, the question is whether 
$\frac{1}{\phi(f)}\phi(\sqrt{fA}) 
\subseteq 
\frac{1}{\phi(f)}\sqrt{\phi(f)B}$ 
or equivalently, whether 
$\phi(\sqrt{fA}) 
\subseteq 
\sqrt{\phi(f)B}$, which is clear since $a^n = fh$ for some $h$ implies $\phi(a)^n =\phi(f)h'$ for some $h'$. 
If $A = B$, so $g = fh$ for some $h$, the question is whether 
$\frac{1}{f}\sqrt{fA} 
\subseteq 
\frac{1}{fh}\sqrt{fhA}$ 
or equivalently, whether
$h\sqrt{fA} 
\subseteq 
\sqrt{fhA}$ which is also clear since if $a^n = fj$ for some $j$ and $n \geq 1$ then $(ha)^n = fhj'$ for some $j'$.
We get the general case by factoring the given morphism as $(A, f) \to (B, \phi(f)) \to (B, g)$.
\end{proof}

\begin{prop} \label{prop:fpqc}
Suppose that $A$ is a ring, $f \in A$ is a nonzero divisor and $A \to B$ a faithfully flat morphism (in particular, the images of $f$ in $B$ and $B \otimes_A B$ are again nonzero divisors). Then
\[ 0 \to \ulMO(A, f) \to \ulMO(B, f) \to \ulMO(B \otimes_A B, f) \]
is exact, where we write $f$ also for the images in $B$ and $B\otimes_A B$ to lighten the notation.
\end{prop}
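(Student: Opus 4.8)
The plan is to compare the three-term complex in the statement with the faithfully flat descent complex of the $A$-module $A[f^{-1}]$, and then to check that the element produced by descent actually lies in the submodule $\ulMO(A,f)$.

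First I would note that all three maps make sense by Lemma~\ref{lemm:affineEtQC} (each of $\phi\colon A\to B$ and the two coprojections $B\to B\otimes_A B$ sends $f$ to a divisor of $f$), and that $f$ remains a nonzerodivisor in $B$ and in $B\otimes_A B$ by flatness. Since localisation is flat, $B[f^{-1}]=A[f^{-1}]\otimes_A B$ and $(B\otimes_A B)[f^{-1}]=A[f^{-1}]\otimes_A B\otimes_A B$, so ordinary faithfully flat descent for the $A$-module $A[f^{-1}]$ gives an exact sequence
\[ 0\to A[f^{-1}]\to B[f^{-1}]\to (B\otimes_A B)[f^{-1}] \]
(the last arrow being the difference of the two coprojections). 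There is an evident map of complexes from the sequence in the statement into this one, with injective vertical inclusions $\ulMO(-,f)\hookrightarrow(-)[f^{-1}]$, and injectivity of $\ulMO(A,f)\to\ulMO(B,f)$ follows immediately.

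For exactness at $\ulMO(B,f)$ I would take $x\in\ulMO(B,f)$ mapping to $0$ in $\ulMO(B\otimes_A B,f)$; then $x$ maps to $0$ in $(B\otimes_A B)[f^{-1}]$, so by the exact sequence above there is a unique $y\in A[f^{-1}]$ with image $x$, and it remains to show $y\in\ulMO(A,f)$. Writing $x=b/f$ with $b\in\sqrt{fB}$, the element $fy\in A[f^{-1}]$ has image $b\in B$. The one input needed beyond formal descent is that for faithfully flat $\phi\colon A\to B$ and every $n\ge 0$ one has $\phi^{-1}(f^nB)=f^nA$, since $A/f^nA\to B/f^nB=(A/f^nA)\otimes_A B$ is injective (faithfully flat modules are pure). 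From this I would deduce: (a) any $z\in A[f^{-1}]$ whose image lies in $B$ already lies in $A$ --- clear denominators, $z=c/f^m$ with $m$ minimal, and apply the case of $f^m$; applying this to $z=fy$ shows $fy\in A$; and (b) any $a\in A$ with $\phi(a)\in\sqrt{fB}$ lies in $\sqrt{fA}$ --- from $\phi(a^n)=\phi(a)^n\in fB$ one gets $a^n\in\phi^{-1}(fB)=fA$; applying this to $a=fy$, whose image is $b$, shows $fy\in\sqrt{fA}$. Hence $y=(fy)/f\in\ulMO(A,f)$, which finishes the argument.

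The step I expect to be the main obstacle is precisely (a) and (b): descent in the ambient localised rings is not by itself enough, and one has to notice that controlling the denominator and verifying the radical condition defining $\ulMO$ both require a second, slightly different, use of faithful flatness, namely the purity statement $f^nA=\phi^{-1}(f^nB)$. The rest is diagram chasing.
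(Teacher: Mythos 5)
Your proof is correct and follows essentially the same route as the paper's: both arguments first use faithfully flat descent for the localisation $A[f^{-1}]$ to place the cocycle in $A[f^{-1}]$, and then invoke faithful flatness a second time to check membership in the submodule $\ulMO(A,f)$. The only difference is in that second step --- the paper reformulates membership via Lemma~\ref{lemm:memCritMO} as $fa,(fa)^na\in A$ and deduces it from exactness of $0\to A\to B\to B\otimes_A B$ applied to these elements (which are cocycles because $B\otimes_AB$ injects into its localisation), whereas you verify the definition directly using purity, $\phi^{-1}(f^nB)=f^nA$; these are interchangeable consequences of faithful flatness, and your identification of this as the non-formal step matches where the paper's proof does its real work.
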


\begin{proof}
We are studying the diagram:
\[ \xymatrix@R=12pt{
0 \ar[r] & A[\frac{1}{f}] \ar[r] & B[\frac{1}{f}] \ar[r] & (B {\otimes}_A B)[\frac{1}{f}] \\
0 \ar[r] & \ulMO(A, f) \ar[u]_{\cup|} \ar[r] & \ulMO(B, f) \ar[u]_{\cup|} \ar[r] & \ulMO(B {\otimes}_A B, f) \ar[u]_{\cup|}  \\
0 \ar[r] & A \ar[u]_{\cup|} \ar[r] & B \ar[u]_{\cup|} \ar[r] & B {\otimes}_A B \ar[u]_{\cup|} 
} \]
The map $\ulMO(A,f) \to \ulMO(B,f)$ is injective because it is induced by the monomorphism $A[\frac{1}{f}] \to B[\frac{1}{f}]$. Suppose that $a \in \ker(\ulMO(B,f) \to \ulMO(B{\otimes}_A B,f))$. By exactness of the top row,  
the element $a$ is in the subgroup $A[\frac{1}{f}] \subseteq B[\frac{1}{f}]$. We want to show that $a \in \ulMO(A,f)$. That is, we want to show that there is $n \geq 0$ such that $fa, (fa)^na \in A$. Since $a \in \ulMO(B, f)$, there is $n\geq0$ such that $fa, (fa)^na \in B$. But $B{\otimes}_AB \to (B{\otimes}_AB)[\frac{1}{f}]$ is injective, so the cocycle condition for $a$ implies that $fa$ and $(fa)^na$ are cocycles, so we see that $fa$ and $(fa)^na$ are in the subgroup $A \subseteq B$ as desired.
\end{proof}

\begin{lemm} \label{lemm:etaleAffineCoh}
Let $\phi: A \to B$ be an {\'e}tale homomorphism and $f \in A$ a nonzero divisor. Then the canonical isomorphism $A[f^{-1}] \otimes_A B \stackrel{\sim}{\to} B[\phi(f)^{-1}]$ induces an isomorphism of submodules
\[ \ulMO(A, f) \otimes_A B \stackrel{\sim}{\to} \ulMO(B, \phi(f)). \]
\end{lemm}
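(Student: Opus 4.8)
The plan is to reduce the statement to the purely ideal-theoretic identity $\phi(\sqrt{(f)})B=\sqrt{\phi(f)B}$ inside $B$, and to obtain that identity from the fact that an {\'e}tale algebra over a reduced ring is reduced.

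First I would observe that, $\phi$ being flat, $\phi(f)$ is again a nonzero divisor, so $\ulMO(B,\phi(f))$ is defined, and that both modules in question can be described uniformly as ideals. Since $f$ is invertible in $A[f^{-1}]$, multiplication by $f$ is an automorphism of $A[f^{-1}]$ carrying the submodule $\ulMO(A,f)=\tfrac1f\sqrt{(f)}$ onto the ideal $\sqrt{(f)}\subseteq A$, and likewise multiplication by $\phi(f)$ carries $\ulMO(B,\phi(f))$ onto $\sqrt{\phi(f)B}\subseteq B$. Because the canonical ring isomorphism $A[f^{-1}]\otimes_A B\xrightarrow{\sim}B[\phi(f)^{-1}]$ sends $f\otimes 1$ to $\phi(f)$, it intertwines these two automorphisms; hence the map of the lemma is an isomorphism if and only if that ring isomorphism carries the submodule $\sqrt{(f)}\otimes_A B$ of $A[f^{-1}]\otimes_A B$ onto the submodule $\sqrt{\phi(f)B}$ of $B[\phi(f)^{-1}]$.

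Next, flatness of $\phi$ shows that $\sqrt{(f)}\otimes_A B\hookrightarrow B$ is injective with image the extended ideal $\phi(\sqrt{(f)})B$ (restricting the displayed ring isomorphism to this submodule already shows the map of the lemma is injective). So the only thing left is to prove $\phi(\sqrt{(f)})B=\sqrt{\phi(f)B}$. The inclusion $\subseteq$ is formal, since $a^n\in(f)$ forces $\phi(a)^n\in\phi(f)B$. For $\supseteq$ I would argue that $B/\phi(\sqrt{(f)})B=B\otimes_A(A/\sqrt{(f)})$ is {\'e}tale over the reduced ring $A/\sqrt{(f)}$, hence is reduced; therefore $\phi(\sqrt{(f)})B$ is a radical ideal of $B$ containing $\phi(f)$, so it contains $\sqrt{\phi(f)B}$.

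The single non-formal ingredient, and the only place where the {\'e}tale (as opposed to merely flat) hypothesis is used, is the statement that an {\'e}tale algebra over a reduced ring is reduced --- equivalently, that radical ideals stay radical under {\'e}tale base change. This is standard: {\'e}tale morphisms are flat with geometrically reduced (indeed finite separable) fibres, so e.g. [EGA IV, 3.3.5] or the Stacks project applies. Granting it, everything else is a compatibility check: one verifies that under the two ``multiply by the unit'' identifications the morphism produced by Lemma~\ref{lemm:affineEtQC} is precisely the inclusion $\sqrt{(f)}\otimes_A B=\phi(\sqrt{(f)})B\hookrightarrow\sqrt{\phi(f)B}$, which we have just shown to be an equality.
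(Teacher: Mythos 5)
Your proposal is correct and follows essentially the same route as the paper: both reduce to the ideal identity $\phi(\sqrt{fA})B=\sqrt{\phi(f)B}$, establish it via the fact that an \'etale algebra over a reduced ring is reduced (Stacks 033B), and then use flatness plus multiplication by the unit $f$ to transfer the identity to the $\ulMO$ submodules. Your two-inclusion phrasing of the key identity is a slight repackaging of the paper's argument that $(A/fA)_{\red}\otimes_A B\to(B/\phi(f)B)_{\red}$ is an isomorphism, but the mathematical content is the same.
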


\begin{exam} \label{exam:notFpqcCoh}
Even though $\ulMO$ is an fpqc sheaf, Prop.~\ref{prop:fpqc}, the statement of Lemma~\ref{lemm:etaleAffineCoh} does not generalise to flat morphisms. For $(\C[[t^2]], t^4) \to (\C[[t]], t^4)$ the morphism $B \otimes_A \ulMO(A, f) \to \ulMO(B, f)$ is $t^{-2}\C[[t]] \to t^{-3}\C[[t]]$. Of course, this does not preclude the possibility that the comparisons $H_{\Zar}^n(\hX, \ulMO) \to H_{\fppf}^n(\hX, \ulMO)$ be isomorphisms.
\end{exam}

\begin{proof}
First note that we have $\phi(\sqrt{fA})B = \sqrt{\phi(f)}B$ because $\phi$ is {\'e}tale: Indeed, to prove this claim it suffices to show that $\psi: (A/fA)_{\red} \otimes_A B \to (B /\phi(f)B)_{\red}$ is an isomorphism. By \cite[033B]{stacks-project} the ring $(A/fA)_{\red} \otimes_A B$ is already reduced because it is {\'e}tale over the reduced ring $(A/fA)_{\red}$, so it suffices that $\Spec(\psi)$ be a surjective closed immersion. This happens because $(A/fA) \otimes_A B \to B /\phi(f)B$ is an isomorphism, and $\Spec((A/fA)_{\red}) \to \Spec(A/fA)$ and $\Spec(B /\phi(f)B)_{\red} \to \Spec(B/\phi(f)B$ are surjective closed immersions.

Now since $A \to B$ is flat, $\sqrt{fA} \otimes_A B \stackrel{\sim}{\to} \phi(\sqrt{fA})B$ so we have $\sqrt{fA} \otimes_A B  \stackrel{\sim}{\to} \sqrt{\phi(f)}B$, and therefore $\frac{1}{f} \sqrt{fA} \otimes_A B \stackrel{\sim}{\to} \frac{1}{\phi(f)}\sqrt{\phi(f)}B$, considered as submodules of $A[f^{-1}] \otimes_A B \stackrel{\sim}{\to} B[\phi(f)^{-1}]$.
\end{proof}

\begin{theo} \label{theo:MOdefi}
There is a unique $\fppf$-sheaf $\ulMO$ on $\ulPSm_k$ such that for affine modulus pairs with principal modulus $(\Spec(A), (f))$ we have $\ulMO(\Spec(A), (f)) = \ulMO(A, f)$. Furthermore, this is quasi-coherent as an {\'e}tale sheaf. In particular, its Zariski, Nisnevich, and {\'e}tale cohomologies agree, and vanish for affines.
\end{theo}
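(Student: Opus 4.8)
The plan is to construct $\ulMO$ by descent from the prescribed values $\ulMO(A,f)$: first organise these into a sheaf on the full subcategory $\sB\subseteq\ulPSm_k$ of affine modulus pairs with principal modulus, and then extend to all of $\ulPSm_k$ via the comparison lemma for sites. (Here ``fppf topology on $\ulPSm_k$'' means the one defined, like the Zariski topology of Recollection~\ref{reco2}, by families of minimal morphisms which are fppf coverings on total spaces.) The first observation is that $\ulMO$ is already a presheaf on $\sB$: an ambient morphism $(\Spec B,(g))\to(\Spec A,(f))$ amounts to a ring homomorphism $\phi\colon A\to B$ with $(g)\subseteq(\phi(f))$, i.e.\ with $\phi(f)$ dividing $g$, which is exactly the hypothesis of Lemma~\ref{lemm:affineEtQC}; the resulting maps $\ulMO(A,f)\to\ulMO(B,g)$ are induced by $A[f^{-1}]\to B[g^{-1}]$ and hence are functorial. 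Next, this presheaf is a sheaf for the induced fppf topology on $\sB$: since the total spaces here are noetherian, any fppf covering family in $\sB$ is refined by a single faithfully flat morphism of affines (pass to a finite affine subcover and take the disjoint union), and for such a covering the sheaf condition is exactly the left-exactness of $0\to\ulMO(A,f)\to\ulMO(B,f)\to\ulMO(B\otimes_AB,f)$ proved in Proposition~\ref{prop:fpqc}.

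To extend: every $(\hX,\mX)\in\ulPSm_k$ admits a Zariski --- a fortiori fppf --- covering by objects of $\sB$, obtained by covering $\hX$ with affine opens on which the Cartier divisor $\mX$ becomes principal. Thus $\sB$ is a topologically generating full subcategory of $(\ulPSm_k,\mathrm{fppf})$, and by the comparison lemma \cite[Exp.~III]{SGA41} the sheaf on $\sB$ extends uniquely to an fppf sheaf $\ulMO$ on $\ulPSm_k$; since a sheaf is determined by its values on a topologically generating subcategory, this also yields the uniqueness asserted in the theorem. Unravelling the gluing, one finds $\ulMO(\hX,\mX)=\Gamma\big(\hX,\sqrt{\sI}\otimes_{\OO_{\hX}}\sI^{\otimes-1}\big)$ with $\sI\subseteq\OO_{\hX}$ the invertible ideal sheaf of $\mX$ (for $\sI=(f)$ on $\Spec A$ this is $\tfrac1f\sqrt{fA}=\ulMO(A,f)$, as it must be); equivalently, $(\hX,\mX)\mapsto\Gamma(\hX,\sqrt{\sI}\otimes\sI^{\otimes-1})$ is visibly an fppf sheaf restricting to $\ulMO$ on $\sB$, hence is this extension.

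For the remaining statements, fix $\hX$. Lemma~\ref{lemm:etaleAffineCoh} says precisely that on the small \'etale site of $\hX$ the sheaf $\ulMO$ is the one associated to the quasi-coherent $\OO_{\hX}$-module $\sqrt{\sI}\otimes_{\OO_{\hX}}\sI^{\otimes-1}$, so $\ulMO$ is quasi-coherent as an \'etale sheaf. The classical agreement of Zariski, Nisnevich and \'etale cohomology for quasi-coherent sheaves then gives $H^\bullet_{\Zar}(\hX,\ulMO)\cong H^\bullet_{\Nis}(\hX,\ulMO)\cong H^\bullet_{\et}(\hX,\ulMO)$, and Serre's vanishing theorem for quasi-coherent sheaves kills the higher cohomology when $\hX$ is affine.

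The main obstacle is the site-theoretic bookkeeping in the middle: confirming that it suffices to verify the sheaf axiom on the base $\sB$, that $\sB$ is topologically generating for the fppf topology on $\ulPSm_k$ (so the comparison lemma applies), and that an arbitrary fppf covering of an affine object reduces --- using quasi-compactness, which is where the finite-type hypothesis enters --- to the single faithfully flat cover handled by Proposition~\ref{prop:fpqc}. The rest is either formal (the functoriality on $\sB$) or classical (the quasi-coherent cohomology comparison and Serre vanishing).
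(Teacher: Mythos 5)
Your proposal is correct and follows essentially the same route as the paper: the paper defines $\ulMO$ as the Zariski sheafification of the right Kan extension from affine pairs with principal modulus, which is exactly the extension your comparison-lemma argument produces, and both arguments rest on Proposition~\ref{prop:fpqc} for descent and on Lemma~\ref{lemm:etaleAffineCoh} for \'etale quasi-coherence (the paper's citation of Lemma~\ref{lemm:affineEtQC} at that point appears to be a slip). Your extra details --- refining an fppf cover of an affine to a single faithfully flat affine morphism, and the explicit identification $\ulMO(\hX,\mX)=\Gamma(\hX,\sqrt{\sI}\otimes\sI^{\otimes-1})$ --- are correct elaborations of steps the paper leaves implicit.
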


\begin{proof}
Define $\ulMO$ to be the Zariski-sheafification of the right Kan extension from affines to all of $\ulPSm_k$. On affines, the Zariski sheafification is the same as the right Kan extension because of Proposition~\ref{prop:fpqc}. This Zariski sheaf is automatically an {fpqc}-sheaf by Proposition~\ref{prop:fpqc} and \cite[03O1]{stacks-project}.

Lemma~\ref{lemm:affineEtQC} says that this \'etale sheaf is quasi-coherent in the sense of \cite[Exa.II.1.2(d), Cor.II.1.6]{Mil80}, so the cohomologies agree, \cite[Rem.III.3.8]{Mil80}, and vanish for affines by \cite[01XB]{stacks-project}.
\end{proof}

\section{Blow-up invariance of $R\Gamma(-,\protect\ulMO)$} \label{sec:MObu}

In the previous section, we have constructed a Zariski (in fact fpqc) sheaf $\ulMO$ of abelian groups on $\ulPSm_k$ which is quasi-coherent as an \'etale sheaf. 
Our next goal is to prove that $\ulMO$ and its cohomology presheaves are invariant under suitable blow-ups. For global sections $\ulMO$ we need a normality assumption, Prop.\ref{prop:hdescent}. For the cohomology we assume normal crossings, Prop.\ref{prop:MOBUinv}.

To begin with we characterise of elements of $\ulMO(A,f)$ in Lemma~\ref{lemm:memCritMO}, and show that if $A$ is normal, $\ulMO(A,f)$ is normal in the sense of Barth,\footnote{Cf.\cite[pg.128]{Bar77}.} Lem.\ref{lemm:intersectHeightOne}.  

\begin{lemm} \label{lemm:memCritMO}
Let $A$ be a ring, $f \in A$ a nonzero divisor, and $a \in A[f^{-1}]$. Then $a \in \ulMO(A,f)$ if and only if for some $n \geq 0$ both $fa, (fa)^na \in A$.
\end{lemm}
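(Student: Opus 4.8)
The plan is to unwind the definition of $\ulMO(A,f)$ and reduce both implications to an elementary manipulation of the element $fa$. First I would record the basic reformulation that, since $f$ is a nonzero divisor, an element $a \in A[f^{-1}]$ lies in $\ulMO(A,f)$ precisely when $fa \in A$ and $fa \in \sqrt{(f)}$: indeed $\ulMO(A,f)$ consists by definition of the fractions $b/f$ with $b \in \sqrt{(f)}$, and $b = f\cdot(b/f) = fa$ is recovered unambiguously inside $A$ because $f$ is a nonzero divisor.

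For the forward implication, suppose $a \in \ulMO(A,f)$, so that $fa \in A$ and $(fa)^m \in (f)$ for some $m \geq 1$, say $(fa)^m = fh$ with $h \in A$. Then $(fa)^m a = (fh)a = h\cdot(fa) \in A$, so the two required membership conditions hold with $n = m$.

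For the converse, suppose $fa \in A$ and $(fa)^n a \in A$ for some $n \geq 0$. Multiplying the second relation by $f$ and using $fa \in A$ gives $(fa)^{n+1} = f\cdot\bigl((fa)^n a\bigr) \in (f)$, whence $fa \in \sqrt{(f)}$; therefore $a = (fa)/f$ exhibits $a$ as an element of $\ulMO(A,f)$ by the reformulation above.

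I do not expect any real obstacle here; the only points requiring a moment's care are the bookkeeping with exponents (passing from $m$ to $n=m$ in one direction and from $n$ to $n+1$ in the other) and the use of the nonzero-divisor hypothesis to pin down $fa$ as a well-defined element of $A$.
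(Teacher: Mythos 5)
Your proof is correct and follows essentially the same route as the paper's: both directions come down to the observation that $a \in \ulMO(A,f)$ is equivalent to $fa \in A$ together with $fa \in \sqrt{(f)}$, and the latter is converted to the condition $(fa)^na \in A$ by multiplying or dividing one copy of $f$ across the relation $(fa)^{m}\in(f)$. Your write-up is in fact slightly more explicit than the paper's about why $fa$ is a well-defined element of $A$ (using that $f$ is a nonzero divisor), but the content is identical.
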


\begin{proof}
Certainly, if $a \in \ulMO(A,f)$, then $af \in \sqrt{(f)} \subseteq A$ so $(af)^{n+1} = bf$ for some $n \geq 0, b \in A$ so $(af)^na \in A$ for some $n \geq 0$. On the other hand, if $af, (af)^na \in A$ for some $n \geq 0$, then $af \in \sqrt{(f)}$ so $a \in \ulMO(A, f)$.
\end{proof}

\begin{lemm} \label{lemm:intersectHeightOne}
Let $A$ be a Noetherian normal domain and $f \in A$ a nonzero divisor. Then there is an equality
\[ \ulMO(A, f) = \bigcap_{\textrm{height } \p = 1} \ulMO(A_\p, f) \]
of sub-$A$-modules of $\Frac(A)$.
\end{lemm}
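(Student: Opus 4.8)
The plan is to reduce to the standard description of a Noetherian normal domain as the intersection of its localisations at height one primes, $A = \bigcap_{\height \p = 1} A_\p$ inside $\Frac(A)$, each such $A_\p$ being a discrete valuation ring. I would combine this with Krull's principal ideal theorem, which guarantees that every minimal prime over the principal ideal $(f)$ (there are finitely many, $A$ being Noetherian) has height one, since $f$ is a nonzero divisor.

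The inclusion $\ulMO(A,f) \subseteq \bigcap_{\height \p = 1}\ulMO(A_\p, f)$ is purely formal: it is functoriality of $\ulMO(-,-)$ along the localisation maps $A \to A_\p$, which is provided by Lemma~\ref{lemm:affineEtQC} (applied to $A \to A_\p$ with $g$ the image of $f$).

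For the reverse inclusion I would take $x$ in the right-hand intersection and argue in two steps. First, \emph{clearing denominators}: for each height one $\p$, writing $x = a_\p/f$ with $a_\p \in \sqrt{fA_\p}$ shows $fx = a_\p \in A_\p$, hence $b := fx$ lies in $\bigcap_{\height\p=1} A_\p = A$; moreover $b \in \sqrt{fA_\p}$ for every height one $\p$. Second, \emph{globalising the radical membership}: letting $\p_1,\dots,\p_r$ be the minimal primes over $(f)$, each of height one, and $v_i$ the valuation on the DVR $A_{\p_i}$, the condition $b \in \sqrt{fA_{\p_i}} = \p_i A_{\p_i}$ gives $v_i(b) \geq 1$, so with $n := \max_i v_i(f)$ one checks $b^n/f \in A_\q$ for every height one prime $\q$ --- immediate when $f \notin \q$, and from $n\,v_i(b) \geq n \geq v_i(f)$ when $\q = \p_i$ (a height one prime containing $f$ is necessarily one of the $\p_i$). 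Hence $b^n/f \in A$, i.e.\ $b \in \sqrt{fA}$, i.e.\ $x = b/f \in \ulMO(A,f)$.

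The only step with any content is the second one: a priori the exponents realising ``$b \in \sqrt{fA_\p}$'' depend on $\p$ and could be unbounded, so the uniform exponent $n$ has to be manufactured. The point making this work is that only the finitely many height one primes containing $f$ actually matter, and over those the DVR structure pins down such an $n$; everything else is routine normal-domain bookkeeping. If $(f) = A$ the statement is trivial (both sides equal $A$), so there is no genuine edge case to worry about.
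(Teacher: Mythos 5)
Your proof is correct, and the forward inclusion is handled the same way as in the paper (functoriality/the membership criterion along $A \to A_\p$). Where you diverge is in how you manufacture the uniform exponent for the reverse inclusion. The paper first reduces, via Lemma~\ref{lemm:memCritMO}, to showing $fa$ and $(fa)^n a$ lie in $A$ for a single $n$, and obtains uniformity topologically: the locus where these rational functions are regular is open, it contains every height one point, and quasi-compactness of $\Spec A$ extracts a finite bound $n = \max(n_{\p_1},\dots,n_{\p_m})$ before concluding with $A = \bigcap_{\height \p = 1} A_\p$. You instead argue valuation-theoretically: only the finitely many minimal primes $\p_1,\dots,\p_r$ of $(f)$ (height one by Krull) impose a genuine condition, and over the corresponding DVRs the bound $n = \max_i v_i(f)$ is explicit, after which normality ($A = \bigcap_{\height \q = 1} A_\q$) is invoked once at the end. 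Your route is arguably tighter: it produces a concrete exponent depending only on $f$, and it sidesteps the slightly delicate point in the paper's argument that the open sets $U_{\p}$ attached to height one primes actually cover all of $\Spec A$ (which is really another appeal to normality in disguise, since their complement only avoids the height one points). The paper's version, on the other hand, does not need to identify the minimal primes of $(f)$ and transports more directly to the generalisation mentioned in Remark~\ref{rema:baby}, where the height one localisations are replaced by local rings of a Riemann--Zariski space. Both proofs are complete and use the same two essential inputs (the DVR/normality description of $A$ and finiteness of the relevant primes, whether via quasi-compactness or via Noetherianity of $(f)$).
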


\begin{proof}
The inclusion 
$\ulMO(A, f) \subseteq \bigcap_{\textrm{height } \p = 1} \ulMO(A_\p, f)$
comes from Lemma~\ref{lemm:memCritMO}. 
Suppose we have an element $a$ on the right. Then for all height one primes $\p$, we have $a \in A_\p[f^{-1}] = A[f^{-1}]_\p$ and  there exists $n_{\p}$ such that $fa, (fa)^{n_{\p}} a \in A_\p$ by Lemma~\ref{lemm:memCritMO}.
Since $\in A$ is an open condition, for each $\p$, there exists an open neighborhood $U_{\p}$ of $\p$ on which $fa$ and $(fa)^{n_{\p}}a$ are still regular functions.
Since $\Spec A$ is quasi-compact, there exists a finite family $\p_1,\dots,\p_m$ such that  $\Spec A = \cup_{i=1}^m U_{\p_i}$. 
Set $n:=\max (n_{\p_1},\dots,n_{\p_m})$.
Then we have $fa \in A$ and $(fa)^n \in A$, and hence belongs to the left hand side, as desired.
\end{proof}

\begin{prop} \label{prop:hdescent}
Suppose that $\sY \to \sX$ is a morphism in $\ulPSm_k$ such that $\mY = \hY \times_{\hX} \mX$ and $\hY \to \hX$ is a proper, surjective morphism with normal target. 
Then the square 
\[ \xymatrix{
\ulMO(\sY) \ar[r] & \OO(\iY) \\
\ulMO(\sX) \ar[r]  \ar[u] & \OO(\iX) \ar[u]
} \] 
is Cartesian. In particular, there is a unique presheaf on $\ulMSm_k$ whose restriction to $\ulPSm_k$ agrees with $\ulMO$ on integrally closed modulus pairs.
\end{prop}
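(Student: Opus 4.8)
The plan is to reduce the Cartesian assertion to a statement about a single Noetherian normal domain, and then feed the properness of $\hY \to \hX$ into the valuative criterion of properness. First I would note that $\iY \to \iX$ is proper and surjective, being the base change of $\hY \to \hX$, so, $\iX$ being reduced, the restriction $\OO(\iX) \to \OO(\iY)$ is injective. Hence the square is Cartesian if and only if $\ulMO(\sX)$ coincides with $\ulMO(\sY) \cap \OO(\iX)$ as subgroups of $\OO(\iY)$. The inclusion $\ulMO(\sX) \subseteq \ulMO(\sY) \cap \OO(\iX)$ is formal, as $\sY \to \sX$ is an ambient morphism and $\ulMO \hookrightarrow \OO(-^\circ)$ is a map of presheaves on $\ulPSm_k$. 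Both $\ulMO$ and $\sX \mapsto \OO(\iX)$ are Zariski sheaves on $\ulPSm_k$ and the claim is local on $\hX$, so I would reduce to the case $\hX = \Spec A$ with $A$ a Noetherian normal domain (a connected normal scheme being integral), $\mX = (f)$ for a nonzero $f \in A$, and $\OO(\iX) = A[f^{-1}] \subseteq K := \Frac(A)$. The remaining task is: if $a \in A[f^{-1}]$ has image $a_Y \in \OO(\iY)$ lying in $\ulMO(\sY)$, then $a \in \ulMO(A, f)$.

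By Lemma~\ref{lemm:intersectHeightOne} it suffices to prove $a \in \ulMO(A_\p, f)$ for every height one prime $\p$. As $A_\p$ is a DVR with valuation $v_\p$, and $\sqrt{(f)A_\p}$ equals $A_\p$ or $\m_\p$ according as $v_\p(f)$ is $0$ or positive, this amounts to $v_\p(a) \ge 0$ when $v_\p(f) = 0$ and $v_\p(fa) > 0$ when $v_\p(f) > 0$. Fix such a $\p$. Choose a generic point $\eta_Y$ of $\hY$ over the generic point of $\hX$ (possible by surjectivity; note $\eta_Y \in \iY$, as the effective Cartier divisor $\mY$ is nowhere dense), so $K \subseteq \kappa(\eta_Y)$, and by Chevalley's extension theorem a valuation ring $\mathcal{O}$ of $\kappa(\eta_Y)$ dominating $A_\p$. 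The square with $\Spec\kappa(\eta_Y) \to \hY \to \hX$ on one side and $\Spec\mathcal{O} \to \Spec A_\p \hookrightarrow \hX$ on the other commutes, so the valuative criterion of properness for $\hY \to \hX$ gives a lift $\Spec\mathcal{O} \to \hY$; let $y \in \hY$ be the image of the closed point. This produces local homomorphisms $A_\p = \OO_{\hX,\p} \to \OO_{\hY,y} \to \mathcal{O}$ whose composite is the inclusion $A_\p \hookrightarrow \mathcal{O}$, and a compatible $B \to \mathcal{O}$ for any affine open $\Spec B \ni y$ of $\hY$ (such an open automatically contains the generization $\eta_Y$ of $y$). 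Since $\mathcal{O}$ dominates the DVR $A_\p$ one has $\mathcal{O} \cap K = A_\p$, whence $v_{\mathcal{O}}(x) > 0 \Leftrightarrow v_\p(x) > 0$ and $v_{\mathcal{O}}(x) \ge 0 \Leftrightarrow v_\p(x) \ge 0$ for $x \in K^\times$.

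Now I would extract the two inequalities. On $\Spec B$ the divisor $\mY$ is cut out by $f_B$, the image of $f$ under $A \to B$, and $a_Y|_{\Spec B} \in \ulMO(B, f_B)$ by hypothesis; by Lemma~\ref{lemm:memCritMO} there is an $n$ with $c := f_B \cdot a_Y|_{\Spec B} \in B$ and $(f_B \cdot a_Y|_{\Spec B})^n a_Y|_{\Spec B} \in B$. In particular $c \in \sqrt{(f_B)B}$, and $c$ is the image of $fa$. Pushing into $\kappa(\eta_Y)$ through $B \to \mathcal{O} \hookrightarrow \kappa(\eta_Y)$: the relation $c \in B$ gives $v_{\mathcal{O}}(fa) \ge 0$, and, when $v_\p(f) > 0$ so that $v_{\mathcal{O}}(f_B) = v_{\mathcal{O}}(f) > 0$, a relation $c^m \in (f_B)B$ forces $v_{\mathcal{O}}(fa) = v_{\mathcal{O}}(c) > 0$. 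These are exactly $v_\p(a) \ge 0$ when $v_\p(f) = 0$ and $v_\p(fa) > 0$ when $v_\p(f) > 0$, so $a \in \ulMO(A_\p, f)$; ranging over $\p$ and invoking Lemma~\ref{lemm:intersectHeightOne} yields $a \in \ulMO(A, f)$, which proves that the square is Cartesian. For the last sentence, every object of $\ulMSm_k$ is isomorphic in $\ulMSm_k$ to an integrally closed one (by blowup and normalisation), and applying the Cartesian square to an abstract admissible blowup between integrally closed pairs — where $\iY \iso \iX$ makes the right vertical an isomorphism, hence the left one too — shows that $\ulMO$ restricted to the full subcategory of integrally closed pairs inverts the abstract admissible blowups among them. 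Since that subcategory meets every isomorphism class of $\ulMSm_k$, a cofinality argument with the right calculus of fractions identifies its localization with $\ulMSm_k$ and thereby produces the desired unique presheaf on $\ulMSm_k$.

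The step I expect to be the main obstacle is the bookkeeping around the valuative criterion: arranging a valuation ring $\mathcal{O}$ that simultaneously dominates $A_\p$ and has fraction field $\kappa(\eta_Y)$, ensuring the lift $\Spec\mathcal{O} \to \hY$ carries its closed point into an affine open on which $\mY$ is principal, and checking that the composite $A[f^{-1}] \to B[f_B^{-1}] \to \mathcal{O} \hookrightarrow \kappa(\eta_Y)$ agrees with $A[f^{-1}] \subseteq K \hookrightarrow \kappa(\eta_Y)$, so that ``$v_{\mathcal{O}}(fa)$'' is unambiguous. None of this is deep, but it is where care is required.
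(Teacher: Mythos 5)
Your proposal is correct and follows essentially the same route as the paper: reduce Zariski-locally to an affine normal total space with principal modulus, use the intersection over height-one primes (Lemma~\ref{lemm:intersectHeightOne} / normality) to localize, and invoke the valuative criterion of properness for $\hY \to \hX$ to pull the required positivity of valuations back to $\OO_{\hX,x}$. The only cosmetic difference is order of operations — the paper first globalizes $fa,(fa)^na$ over $\hY$ via quasi-compactness and then checks they lie in each $\OO_{\hX,x}$, whereas you check membership in $\ulMO(A_\p,f)$ prime by prime, outsourcing the uniformity of $n$ to Lemma~\ref{lemm:intersectHeightOne} — which changes nothing of substance.
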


\begin{proof}
By definition $\ulMO$ is a Zariski sheaf, so we can assume $\hX$ is affine and $\mX$ has a global generator, say $\mX = (f)$. Suppose $a \in \OO(\iX)$ is a section whose image in $\OO(\iY)$ lies in $\ulMO(\sY)$. For all points $y \in \hY$, by Lemma~\ref{lemm:memCritMO} there is some $n_y \geq 0$ for which $fa$ and $(fa)^{n_y}a$ are in $\OO_{\hY, y}$. Since $\hY$ is quasi-compact, there is some $n$ which works for all $y$. So in fact, $fa$ and $(fa)^{n}a$ are in $\OO(\hY)$. Applying Lemma~\ref{lemm:memCritMO} again, it suffices to show that $fa$ and $(fa)^{n}a$ are in $\OO(\hX)$. Since $\hX$ is normal, it suffices to show that they are in the dvrs $\OO_{\hX,x}$ for points $x \in \hX$ of codimension one, Lem.\ref{lemm:intersectHeightOne}. Chose any extension of the valuation of $\OO_{\hX,x}$ to $L$ and let $\OO_L$ be the corresponding valuation ring. Since $\hY \to \hX$ is proper, the morphism $\Spec(\OO_L) \to \Spec(\OO_{\hX,x}) \to \hX$ factors as $\Spec(\OO_L) \to \hY$, so we find that the images of $fa$ and $(fa)^{n}a$ in $L$ are in fact in $\OO_L$. That is, they have value $\geq 0$. Hence, they are in $\OO_{\hX,x}$.
\end{proof}

\begin{rema}
Since $\OO = \hom(-, \AA^1)$ is an $h$-sheaf on the category of normal schemes, \cite[Prop.3.2.10]{Voe96}, it follows from Prop.\ref{prop:fpqc} and Prop.\ref{prop:hdescent} that $\ulMO$ is an $h$-sheaf on normal modulus pairs, but we will not need this.
\end{rema}

\begin{nota} \label{nota:smallsiteMO}
In the following proposition and proof we use $\ulMO_{\hX} = \ulMO|_{\hX_{\Zar}}$ or $\ulMO_{\hX_{\et}} = \ulMO|_{\hX_{\et}}$ for the restrictions to the small Zariski, resp. \'etale sites.
\end{nota}

\begin{prop}[Blow-up invariance of $\ulMO$ and its cohomologies] \label{prop:MOBUinv}
Take $\sX \in \ulPSm_k$ normal crossings and suppose that $Z \subseteq \hX$ is a closed subscheme that has normal crossings with $\mX$ (see Def.\ref{defi:NC} for the terminology). 
Let $f: \hY \to \hX$ be the blowup with centre $Z$, and $\mY = f^* \mX$. Then the canonical morphism in the derived category of quasi-coherent $\OO_{\hX}$-modules
\[ \ulMO_{\hX} \to Rf_*\ulMO_{\hY}\]
is an isomorphism. 
\end{prop}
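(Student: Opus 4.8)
The plan is to work locally on $\hX$, so assume $\hX = \Spec A$ is affine with $A$ Noetherian normal, $\mX = (f)$ for a nonzero divisor $f$, and $Z = V(I)$ a closed subscheme having normal crossings with $\mX$. We must show $\ulMO_{\hX} \to Rf_*\ulMO_{\hY}$ is an isomorphism in the derived category of quasi-coherent $\OO_{\hX}$-modules, i.e. that $H^0(\hX,\ulMO_{\hX}) \to H^0(\hY,\ulMO_{\hY})$ is an isomorphism and $H^i(\hY, \ulMO_{\hY}) = 0$ for $i > 0$ (using that $\hX$ is affine and $\ulMO$ is quasi-coherent, Theorem~\ref{theo:MOdefi}, so $Rf_*$ can be computed via $R\Gamma$).

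First I would reduce to an étale-local computation. Since $\ulMO$ is an fpqc sheaf (Prop.~\ref{prop:fpqc}) which is quasi-coherent as an étale sheaf (Lemma~\ref{lemm:etaleAffineCoh}, Theorem~\ref{theo:MOdefi}) and cohomology can be checked étale-locally, and since the normal crossings condition on $(Z, \mX)$ means that étale-locally on $\hX$ we are in the model situation $A = k[x_1,\dots,x_d]$ (or an étale extension thereof), $f = x_1^{r_1}\cdots x_i^{r_i}$, and $I = (x_{j_1},\dots,x_{j_\ell})$ a coordinate ideal, it suffices to treat that case. By Example~\ref{exam:ulMOsncd}, in this model $\ulMO_{\hX}$ is the \emph{free} rank-one module $\tfrac{1}{x_1^{r_1-1}\cdots x_i^{r_i-1}}A$, i.e. $\ulMO_{\hX} \cong \OO_{\hX}(\mX - |\mX|)$ as line bundles, where $|\mX|$ is the reduced support. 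The heart of the matter is then the statement that $\OO_{\hX}(\mX - |\mX|) \to Rf_*\OO_{\hY}(f^*\mX - |f^*\mX|)$ is an isomorphism --- but I first need to check that $\ulMO_{\hY}$ really is this line bundle on $\hY$, which follows from the same local model computation (Example~\ref{exam:ulMOsncd}) once one knows that $\mY = f^*\mX$ still has normal crossings support on $\hY$: this is exactly why the normal-crossings-with-$\mX$ hypothesis on $Z$ is imposed, since blowing up a coordinate subspace of a normal crossings divisor keeps the total transform a normal crossings divisor (Appendix~\ref{appendix-a}).

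Granting the identifications $\ulMO_{\hX} = \OO_{\hX}(\mX - |\mX|)$ and $\ulMO_{\hY} = \OO_{\hY}(\mY - |\mY|)$, the proposition becomes a concrete coherent-cohomology computation which I would handle by projection formula and the cohomology of blowups of (products with) projective space. Write $D = \mX - |\mX|$, an effective divisor whose support avoids the generic points of $|\mX|$; the key point is that $f^*\mX - |f^*\mX| = f^*(\mX - |\mX|) + (\text{correction})$ and one computes that the correction term is nonnegative and supported on the exceptional divisor in such a way that $Rf_*\OO_{\hY}(f^*D + E') = \OO_{\hX}(D)$. Concretely, after the further étale localisation the blowup is a product of $\hX$ with the blowup of affine space along a coordinate subspace, so it is a $\PP^c$-bundle situation; pushing forward via the projection formula reduces everything to the cohomology of $\OO_{\PP^c}(m)$-type sheaves on projective space, which is precisely the content of Appendix~\ref{sec:cohCohCalc} (the $m = 0$ vanishing $H^i(\PP^c, \OO) = 0$ for $i>0$ and $H^0 = k$, plus its twisted/blown-up variants generalising SGA6). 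The $H^0$ statement then says $\Gamma(\hY, \ulMO_{\hY}) = \Gamma(\hX, \ulMO_{\hX})$, which one can alternatively see directly from Prop.~\ref{prop:hdescent} (normality of $\hX$) together with the fact that $f$ is an isomorphism over $\iX$.

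The step I expect to be the main obstacle is the exact bookkeeping of divisors: verifying that on $\hY$ one genuinely has $\ulMO_{\hY} \cong \OO_{\hY}(f^*\mX - |f^*\mX|)$ (not merely that $\ulMO_{\hY}$ is \emph{some} line bundle), and then identifying the discrepancy $(f^*\mX - |f^*\mX|) - f^*(\mX - |\mX|)$ as an effective exceptional divisor with the precise multiplicities needed so that $Rf_*$ of the associated line bundle is $\OO_{\hX}(\mX - |\mX|)$ with no higher cohomology and no overcounting in degree $0$. This is where the normal crossings hypothesis on both $\mX$ and $Z$ is used in an essential way, and where one must be careful that the étale-local model computation (Example~\ref{exam:ulMOsncd}) glues correctly; the projective-space cohomology inputs from Appendix~\ref{sec:cohCohCalc} then finish the argument mechanically.
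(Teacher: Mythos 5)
Your proposal is correct and follows essentially the same route as the paper's proof: étale-localise to the standard model $(\AA^n, \{t_1^{r_1}\cdots t_i^{r_i}=0\})$ with a coordinate centre (using quasi-coherence and flat base change), identify $\ulMO$ with the line bundle $\OO(\mX-|\mX|)$ via Example~\ref{exam:ulMOsncd}, compute the discrepancy $f^*|\mX| - |f^*\mX| = (i-1)E$, and conclude by the projection formula together with the computation of $Rf_*\OO(1-i)$ in Proposition~\ref{prop:buCohLine} (the paper additionally inserts Lemma~\ref{lem:MOt} to reduce the coordinate-subspace blowup to the blowup at the origin). The "bookkeeping" you flag as the main obstacle is exactly the content of Proposition~\ref{prop:BasicBlowupInvariant}, and resolves as you predict.
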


\begin{proof}
Since the \'etale and Zariski cohomologies of $\ulMO$ agree, Thm.~\ref{theo:MOdefi}, it suffices to show that the morphism $\alpha : \ulMO_{\hX_{\et}} \to Rf_*\ulMO_{\hY_{\et}}$ is an isomorphism, where now $Rf_*$ is the direct image between the small \'etale sites. Since this question is {\'e}tale local, it suffices to find for each point $x \in \hX$ an {\'e}tale morphism $\hU \to \hX$ whose image contains $x$, and such that $\ulMO_{\hX_{\et}}|_{\hU} \to Rf_*\ulMO_{\hY_{\et}}|_{\hU}$ is an isomorphism.

Fix a point $x \in \hX$. By the definition of normal crossings with $\mX$ (Def.~\ref{defi:NC}), there exists a diagram \[\hX \xleftarrow{p} \hU \xrightarrow{q} \AA^n = \Spec k[t_1,\dots,t_n]\] of \'etale morphisms such that $x \in p(\hU)$, $p^*\mX = q^*H$ and $Z \times_{\hX} \hU = Z_0 \times_{\AA^n} \hU$, where $H=\{\prod_{a \in A} t_a^{r_a}=0\}$ and $Z_0=\{t_b=0, \forall b \in B\}$ for some $r_a > 0$ and $A,B\subset \{1,\dots,n\}$. Therefore, replacing $f$ by $f|_{\hU}$, we may assume that there exists an \'etale morphism $q : \hX \to \AA^n$ such that $X^\infty = q^* H$ and $Z=Z_0 \times_{\AA^n} \hX$ with $H = \{t_1^{m_1}\cdots t_r^{m_r}=0\}$ and $Z_0=\{t_b=0, \forall b \in B\}$ as above.

Since $\hX \to \AA^n$ is \'etale (hence flat), we obtain a cartesian diagram
\[\xymatrix{
\hY \ar[r]^{q'} \ar[d]_f \ar@{}[rd]|\square & \hY_0 \ar[d]^{f_0} \\
\hX \ar[r]_q & \AA^n
}\]
where $f_0$ is the blow-up of $\AA^n$ at $Z_0$ and $q'$ is the morphism induced by the universal property of blow-up.
Suppose that we know that the assertion of Prop.~\ref{prop:MOBUinv} holds for $f_0$. That is, suppose that,
setting $\sA:=(\AA^n,H)$ and $\sY_0:=(\hY_0,f_0^*H)$, we have an isomorphism $\ulMO_{\hA} \cong Rf_{0*} \ulMO_{\hY_0}$. 
By applying $q^* = Rq^*$ to this isomorphism and by using the flat base change $q^* Rf_{0*} \cong Rf_* q^{\prime*}$ \cite[02KH]{stacks-project}, we obtain 
\[
q^* \ulMO_{\hA} \cong q^* Rf_{0*} \ulMO_{\hY_0} \cong 
Rf_* q^{\prime*} \ulMO_{\hY_0}.
\]
On the other hand, we have $q^* \ulMO_{\hA} = \ulMO_{\hX}$ and $q^{\prime*} \ulMO_{\hY_0} = \ulMO_{\hY}$ by quasi-coherence since $q$ and $q'$ are \'etale by construction. Assembling all these isomorphisms leads to the isomorphism
$\ulMO_{\hX} \cong Rf_*\ulMO_{\hY}$, and applying Thm.~\ref{theo:MOdefi} again gets us to the desired isomorphism $\ulMO_{\hX_{\et}} \cong Rf_*\ulMO_{\hY_{\et}}$.

So it now suffices to prove Proposition~\ref{prop:MOBUinv} in the special case $f_0: \Bl_{\AA^n} Z_0 \to \AA^n$. This is done by direct calculation in Proposition~\ref{prop:BasicBlowupInvariant} below, using Lemma~\ref{lem:MOt} 
to reduce to the case $Z_0 = \{0\}$.
\end{proof}

\begin{lemma}\label{lem:MOt}
For any ring $A$ and for any non-zero divisor $f \in A$, we have 
\[
\ulMO (A[t],f) \cong \ulMO (A,f)[t].
\]
\end{lemma}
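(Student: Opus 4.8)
The plan is to realise both sides as sub-$A[t]$-modules of the localisation $A[t][f^{-1}]$, which I identify with $A[f^{-1}][t]$ since localisation commutes with adjoining a polynomial variable, and to check that they literally coincide. First note that $f$ is a nonzero divisor of $A[t]$ (if $fh=0$ then $f$ kills every coefficient of $h$, so $h=0$), hence $\ulMO(A[t],f)$ is defined and, by Definition~\ref{defi:ulMORing}, equals the set of fractions $g/f$ with $g \in \sqrt{fA[t]}$. On the other side, $\ulMO(A,f)[t] \subseteq A[f^{-1}][t]$ is exactly the set of polynomials $\sum_i (a_i/f)t^i$ with $a_i \in \sqrt{(f)}$, that is, the set of fractions $g/f$ with $g=\sum_i a_i t^i \in A[t]$ and every $a_i \in \sqrt{fA}$. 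So the lemma reduces to the identity
\[ \sqrt{fA[t]} \;=\; \Bigl\{\, \textstyle\sum_i a_i t^i \in A[t] \ \Bigm|\ a_i \in \sqrt{fA}\ \text{for all } i \,\Bigr\}. \]

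This is the special case $I=(f)$ of the classical fact that $\sqrt{I\cdot A[t]}=\sqrt{I}\cdot A[t]$ for any ideal $I\subseteq A$, which modulo $I$ says that a polynomial over $A/I$ is nilpotent iff all its coefficients are nilpotent. I would include the short argument. The inclusion ``$\supseteq$'' is immediate: $\sqrt{fA[t]}$ is an ideal of $A[t]$ containing every $a\in\sqrt{fA}$ (as $a^m\in fA$ implies $a^m\in fA[t]$), hence containing every $a_i t^i$, hence every such sum. For ``$\subseteq$'', if $g^N\in fA[t]$ then the image $\bar g$ of $g$ in $(A/fA)[t]$ satisfies $\bar g^N=0$; assuming $\bar g\neq 0$ with degree $d$, the coefficient of $t^{dN}$ in $\bar g^N$ is the $N$-th power of the leading coefficient of $\bar g$, so that leading coefficient is nilpotent, whence $\bar g$ minus its leading term is nilpotent of strictly smaller degree, and descending induction on $d$ shows every coefficient of $\bar g$ is nilpotent, i.e.\ every coefficient of $g$ lies in $\sqrt{fA}$.

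Granting the displayed identity, every element of $\ulMO(A[t],f)$ is of the form $g/f$ with $g=\sum_i a_i t^i$ and $a_i\in\sqrt{fA}$, hence equals $\sum_i(a_i/f)t^i\in\ulMO(A,f)[t]$, and conversely every element of $\ulMO(A,f)[t]$ arises this way; so the two sub-$A[t]$-modules of $A[t][f^{-1}]$ agree, giving the asserted isomorphism (in fact an equality) $\ulMO(A[t],f)=\ulMO(A,f)[t]$. I do not expect a genuine obstacle here: the only non-bookkeeping ingredient is the standard description of the radical of an extended ideal in a polynomial ring, and the only point to keep track of is the harmless identification $A[t][f^{-1}]=A[f^{-1}][t]$.
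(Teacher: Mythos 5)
Your proof is correct and follows essentially the same route as the paper's: both reduce the lemma to the identity $\sqrt{fA[t]}=\sqrt{fA}[t]$ and verify the two inclusions by standard commutative algebra. The only (cosmetic) difference is which direction gets the slick argument: the paper proves $\sqrt{fA[t]}\subseteq\sqrt{fA}[t]$ by observing that $A[t]/\sqrt{fA}[t]\cong(A/\sqrt{fA})[t]$ is reduced and the reverse by an explicit choice of exponent, whereas you prove the forward inclusion by the coefficient-by-coefficient nilpotence argument and the reverse by noting that $\sqrt{fA[t]}$ is an ideal containing each $a_it^i$.
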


\begin{proof}
Let $I=(f)$. By unpacking the definition of $\ulMO$, we are immediately reduced to showing the equality $\sqrt{I[t]} = \sqrt{I}[t]$ of ideals of $A[t]$.
We first prove $\sqrt{I[t]} \subset \sqrt{I}[t]$. Since $\sqrt{I[t]}$ is the smallest radical ideal containing $I[t]$, it suffices to show that $\sqrt{I}[t]$ is a radical ideal. This means by definition that the quotient ring $A[t]/\sqrt{I}[t]$ is reduced. But this follows from $A[t]/\sqrt{I}[t] \cong A/\sqrt{I} [t]$.
To see the opposite inclusion $\sqrt{I[t]} \supset \sqrt{I}[t]$, take any polynomial $\sum_i a_i t^i$ of degree $d$ with $a_i \in \sqrt{I}$. Then for each $i$, there exists $n_i >0$ such that $a_i^{n_i} \in I$. Take an integer $N > (d+1)\max \{n_i\}$. Then one checks that $(\sum_i a_i t^i)^N \in I[t]$, and hence $\sum_i a_i t^i \in \sqrt{I[t]}$.
\end{proof}

\begin{prop} \label{prop:BasicBlowupInvariant}
Consider the blowup $f: \hB = \Bl_{\AA^n}\AA^d \to \hA$ of affine space $\hA = \Spec(k[t_1, \dots, t_n])$ along a sub-affine space $\AA^d \subset \AA^n$, equip $\hA$ with the divisor $\mA = t_1^{r_1} \dots t_i^{r_i}$ with $r_j, i \geq 1$ and equip $\hB$ with the pullback $\mB$ to obtain an abstract admissible blowup $\sB = (\hB, \mB) \to (\hA, \mA) = \sA$. 
Then we have
\[ \ulMO_{\hA} \cong Rf_*\ulMO_{\hB} \]
where $\ulMO_{\hA} = \ulMO|_{\hA_{\Zar}}$ means restriction to the small Zariski site (and similar for $\ulMO_{\hB}$).
\end{prop}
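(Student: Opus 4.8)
The plan is to reduce the assertion to standard computations of coherent cohomology along a blow-up. Throughout write $|D| := D_{\mathrm{red}}$ for the reduction of an effective divisor $D$, let $E \subseteq \hB$ be the exceptional divisor, and set $c := \codim_{\hA}(\AA^d) \geq 1$. By Theorem~\ref{theo:MOdefi} both $\ulMO_{\hA}$ and $\ulMO_{\hB}$ are quasi-coherent; the modulus $\mA$ on $\hA = \AA^n$ is a monomial, and on each standard chart $U_j \subseteq \hB$ of the blow-up the pullback $\mB|_{U_j} = f^*\mA|_{U_j}$ is again a product of powers of the coordinate functions, so (a polynomial ring over $k$ being a UFD) Example~\ref{exam:ulMOsncd} applies on $\hA$ and on every $U_j$ and, after gluing, gives the identifications of quasi-coherent sheaves
\[ \ulMO_{\hA} = \OO_{\hA}(\mA - |\mA|), \qquad \ulMO_{\hB} = \OO_{\hB}(\mB - |\mB|); \]
in particular $\ulMO_{\hA}$ is the line bundle of Theorem~\ref{theo:main} and $\ulMO_{\hB}$ is invertible.

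Next I would compare the two divisors. Since $\AA^d$ is smooth and regularly embedded, $\operatorname{ord}_E(f^*t_j) \in \{0,1\}$, equal to $1$ exactly when $t_j$ vanishes on $\AA^d$; writing $p$ for the number of $j \leq i$ with $\operatorname{ord}_E(f^*t_j) = 1$, a direct multiplicity count — whose only subtlety is that $f^*|\mA|$ and $|\mB| = |f^*\mA|$ differ along $E$ — yields
\[ \mB - |\mB| = f^*(\mA - |\mA|) + m E, \qquad m := \max\{p-1,\,0\}, \]
with $0 \leq m \leq c-1$. Hence $\ulMO_{\hB} \cong f^*\ulMO_{\hA} \otimes \OO_{\hB}(mE)$, and by the projection formula (with $\ulMO_{\hA}$ invertible, hence perfect)
\[ Rf_*\ulMO_{\hB} \cong Rf_*\bigl(f^*\ulMO_{\hA} \otimes^{\mathbf{L}} \OO_{\hB}(mE)\bigr) \cong \ulMO_{\hA} \otimes Rf_*\OO_{\hB}(mE). \]
So it remains to prove $Rf_*\OO_{\hB}(mE) \cong \OO_{\hA}$ for $0 \leq m \leq c-1$ and that the resulting isomorphism is the canonical map (a local check on $\hA$).

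The vanishing I would obtain by dévissage in $m$. For $k \geq 1$ the short exact sequence $0 \to \OO_{\hB}((k-1)E) \to \OO_{\hB}(kE) \to \OO_E(kE) \to 0$ reduces us to computing $Rf_*\OO_E(kE)$; as $\AA^d$ is linear its normal bundle is trivial, so $E \to \AA^d$ is the trivial $\mathbb{P}^{c-1}$-bundle with $\OO_E(-E)$ restricting to $\OO(1)$ fibrewise, whence $Rf_*\OO_E(kE) \cong \OO_{\AA^d} \otimes_k R\Gamma(\mathbb{P}^{c-1}, \OO(-k))$ (supported on $\AA^d \subseteq \AA^n = \hA$), which vanishes in all degrees for $1 \leq k \leq c-1$ by the projective-space computations in Appendix~\ref{sec:cohCohCalc} (cf.\ \cite{SGA6}). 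Therefore $Rf_*\OO_{\hB}((k-1)E) \xrightarrow{\sim} Rf_*\OO_{\hB}(kE)$ for $1 \leq k \leq c-1$, and together with the base case $Rf_*\OO_{\hB} = \OO_{\hA}$ (blow-up of a regular scheme along a regularly embedded centre; Appendix~\ref{sec:cohCohCalc}, \cite{SGA6}) this gives $Rf_*\OO_{\hB}(mE) = \OO_{\hA}$ for $0 \leq m \leq c-1$, closing the argument. Lemma~\ref{lem:MOt} enters here to strip off coordinates not appearing in $\mA$ along which $f$ is a product with the identity, which reduces the essential case to $\AA^d = \{0\}$.

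The step I expect to be the main obstacle is the multiplicity identity $\mB - |\mB| = f^*(\mA - |\mA|) + mE$: one has to control the failure of $|\cdot|$ to commute with $f^*$ along the exceptional divisor and, crucially, verify $m \leq c-1$ so that the cohomology of $\OO(-k)$ on $\mathbb{P}^{c-1}$ stays inside its vanishing range. Everything else is formal (projection formula, dévissage) or already in hand (Example~\ref{exam:ulMOsncd}, the line-bundle cohomology of Appendix~\ref{sec:cohCohCalc}).
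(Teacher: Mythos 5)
Your proposal is correct and follows essentially the same route as the paper: identify $\ulMO_{\hA}$ and $\ulMO_{\hB}$ with the line bundles $\OO(\mA-|\mA|)$ and $\OO(\mB-|\mB|)$, compute the discrepancy along $E$, apply the projection formula, and invoke the blowup cohomology computation of Appendix~\ref{sec:cohCohCalc}. The only (harmless) difference is organisational: the paper first reduces to $\AA^d=\{0\}$ via the product decomposition and Lemma~\ref{lem:MOt}, where your multiplicity $m=\max\{p-1,0\}$ becomes the paper's $i-1$, whereas you treat general $d$ directly using the trivial $\PP^{c-1}$-bundle structure of $E\to\AA^d$.
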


\begin{proof}
First we reduce the assertion to the case that $d=0$. 
Note that $\AA^n \cong \AA^{n-d} \times \AA^d$ and $\AA^d \cong \{0\} \times \AA^d$. These identifications induce an isomorphism 
\[
\Bl_{\AA^n} \AA^d \cong (\Bl_{\AA^{n-d}} \{0\}) \times \AA^d
\]
since strict transform along a flat morphism is a pullback \cite[0805]{stacks-project}. Combining this with Lemma~\ref{lem:MOt}, we are reduced to the case $\AA^d=\{0\}$. 

Now assume $\AA^d=\{0\}$.
Observe that the pullback $f^*|\mA|$ of the support $|\mA|$ is $f^*|\mA| = |\mB| + (i{-}1)E$, where $E$ is the exceptional divisor of the blowup. So 
\begin{align*}
f^*\OO(\mA {-} |\mA|) 
= \OO(f^*\mA {-} f^*|\mA|) 
&= \OO(\mB {-} |\mB|) \otimes \OO((1{-}i)E) 
\\&= \OO(\mB {-} |\mB|)(i{-}1).
\end{align*}
Since we are dealing with vector bundles, we can apply the projection formula \cite[01E8]{stacks-project} to find 
\[ Rf_*(\ulMO_{\hB}) 
= Rf_*(f^*(\ulMO_{\hA})(1-i)) 
= \ulMO_{\hA} \otimes_{\OO_{\hA}} Rf_*\OO_{\hB}(1-i). \]
So the result follows from the calculation Proposition~\ref{prop:buCohLine} of $Rf_*\OO_{\hB}(1-i)$ since we have $1 \leq i \leq n$ and therefore $-1-n < 1-n \leq 1-i \leq 0$.
\end{proof}

\section{Cube invariance of $R\Gamma(-, \protect\ulMO)$} \label{sec:MOcube}

The goal of this subsection is to prove that the cohomology presheaves of modulus global sections satisfy cube invariance. 
First we prepare a general criterion for cohomological cube invariance. 
We will use it in Proposition~\ref{prop:ulMOcube} 
to show cube invariance on nice modulus pairs for $\ulMO$.

\begin{lemma}\label{lem:ci-criterion}
Let $F$ be an additive presheaf on $\ulPSch_k$, and let $\tau \in \{\Zar,\Nis,\et\}$.
Let $\sX = (\hX,\mX)$ be a modulus pair such that $\hX$ is quasi-compact and $F_{\sX}$ is a quasi-coherent sheaf of $\OO$-modules on the small site $\hX_{\tau}$.
Suppose moreover that for any affine open subscheme $\hU = \Spec A \subset \hX$ with $\mU := \mX \cap \hU = \Spec A/(f)$ a principal Cartier divisor, the sequence of $A$-modules
\begin{equation}\label{eq:affine-ci}
0 \to F(A,f) \to F(A[t],f) \oplus F(A[\tfrac{1}{t}],f/t) \to F(A[t,\tfrac{1}{t}],f) \to 0
\end{equation}
is exact. Then, for any $i \in \Z$, the first projection $\sX \boxtimes \bcube \to \sX$ induces an isomorphism of abelian groups
\[
H_\tau^i (\hX , F_{\sX}) \xrightarrow{\sim} H_\tau^i (\hX \times \PP^1, F_{\sX \boxtimes \bcube}).
\]
\end{lemma}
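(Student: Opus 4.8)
The strategy is to compute both sides using a \v{C}ech-type / local-cohomology argument on $\hX \times \PP^1$ adapted to the modulus. Cover $\PP^1$ by the two standard affine opens $\PP^1 = \Spec k[t] \cup \Spec k[1/t]$, with intersection $\Spec k[t, 1/t]$. Pulling this back along the projection $\hX \times \PP^1 \to \PP^1$ gives a covering of $\hX \times \PP^1$ by $\hX \times \Spec k[t]$, $\hX \times \Spec k[1/t]$, with intersection $\hX \times \Spec k[t,1/t]$. Crucially, on the modulus pair $\sX \boxtimes \bcube = (\hX \times \PP^1, \mX \times \PP^1 + \hX \times \{\infty\})$, the restriction of the modulus to these three opens is exactly $(f)$, $(f/t)$, and $(f)$ respectively (in the local coordinate where $\mU = \Spec A/(f)$), which is precisely the data appearing in the exact sequence \eqref{eq:affine-ci}. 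So the first step is: since $F_{\sX}$ is quasi-coherent on $\hX_\tau$ with $\hX$ quasi-compact, one reduces (by a standard argument with the Mayer--Vietoris/\v{C}ech spectral sequence for the two-term cover, using that $\tau$-cohomology of quasi-coherent sheaves on affines vanishes and commutes with the affine maps $\hX \times \mathbb{A}^1 \to \hX$ etc.) to checking the statement after restricting to each affine open $\hU = \Spec A \subseteq \hX$ with $\mU$ principal.

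Next, on such an affine $\hU = \Spec A$ one must identify $R\Gamma_\tau(\hU \times \PP^1, F_{\sX \boxtimes \bcube})$ with the two-term \v{C}ech complex
\[
F(A[t], f) \oplus F(A[\tfrac1t], f/t) \longrightarrow F(A[t,\tfrac1t], f).
\]
Here one uses that each of $\hU \times \Spec k[t]$, $\hU \times \Spec k[1/t]$, $\hU \times \Spec k[t,1/t]$ is affine, so higher $\tau$-cohomology of the quasi-coherent sheaf $F_{\sX \boxtimes \bcube}$ vanishes on them, and global sections on these pieces are $F(A[t],f)$, $F(A[1/t], f/t)$, $F(A[t,1/t],f)$ by the hypothesis that $F_{(-)}$ is the quasi-coherent sheaf attached to these modules (one should be slightly careful that $F$ being a presheaf on $\ulPSch_k$ with $F_{\sX}$ quasi-coherent forces these identifications — this uses that the localisation maps $A \to A[t]$ etc.\ are flat, indeed the relevant restriction is $\mathrm{(open)} \times \mathbb{A}^1$ inside $\hX \times \PP^1$). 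Then the cohomology of this \v{C}ech complex in degree $0$ is $F(A,f)$ and in degree $\geq 1$ vanishes, by the hypothesised exactness of \eqref{eq:affine-ci} — exactly giving $H^i_\tau(\hU \times \PP^1, F_{\sX \boxtimes \bcube}) \cong H^0$-part only, matching $F(A, f) = \Gamma(\hU, F_{\sX})$ and $H^{\geq 1}_\tau(\hU, F_{\sX}) = 0$.

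Finally one assembles the local statements globally. Having shown that, on affine opens of $\hX$, the derived pushforward $Rp_* F_{\sX \boxtimes \bcube}$ along $p\colon \hX \times \PP^1 \to \hX$ is concentrated in degree $0$ and equals $F_{\sX}$, one concludes $Rp_* F_{\sX\boxtimes\bcube} \cong F_{\sX}$ in the derived category of quasi-coherent sheaves on $\hX_\tau$, and then applies $R\Gamma_\tau(\hX, -)$ and the Leray/composition-of-derived-functors spectral sequence to get $H^i_\tau(\hX \times \PP^1, F_{\sX \boxtimes \bcube}) \cong H^i_\tau(\hX, F_{\sX})$, compatibly with the projection. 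The main obstacle — and the only place the specific hypothesis \eqref{eq:affine-ci} is used — is the second step: correctly identifying the restriction of the modulus sheaf on the three affine opens of $\hU \times \PP^1$ with the modules $F(A[t],f)$, $F(A[1/t], f/t)$, $F(A[t,1/t],f)$, and verifying that the \v{C}ech differential is the one in \eqref{eq:affine-ci}; once that bookkeeping is pinned down, the rest is formal descent/cohomological nonsense. One should also double-check the corner case of how $\hX$ quasi-compact but possibly non-separated interacts with the \v{C}ech-to-derived comparison, but for the two-element affine-morphism cover this is harmless.
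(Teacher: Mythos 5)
Your proposal is correct and follows essentially the same route as the paper: reduce to affine $\hX$ using quasi-coherence, cover $\PP^1$ by the two standard affines so that the Mayer--Vietoris/\v{C}ech complex for $\hX\times\PP^1$ becomes precisely the sequence \eqref{eq:affine-ci} (with the modulus restricting to $(f)$, $(f/t)$, $(f)$ on the three pieces), and conclude from its exactness. The paper phrases the globalisation as induction on the size of a finite affine cover of $\hX$ rather than via $Rp_*$ and Leray, and first reduces $\tau\in\{\Nis,\et\}$ to $\Zar$ by the agreement of cohomologies for quasi-coherent sheaves, but these are only cosmetic differences.
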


\begin{nota}
The operation $\boxtimes$ is the restriction of $\otimes: \ulMCor_k \times \ulMCor_k \to \ulMCor_k$ to $\ulMSm_k$. That is, for modulus pairs $\sX, \sY$, the total space of $\sX \boxtimes \sY$ is $\hX \times \hY$ and the divisor is $\mX \times \hY + \hX \times \mY$. Note that this almost never represents the Cartesian product in $\ulMSm_k$. 
\end{nota}

\begin{proof}
It suffices to treat the case $\tau=\Zar$ since the \'etale and Zariski cohomology agree if $F$ is a quasi-coherent \'etale sheaf, \cite[Rem.III.3.8]{Mil80}.
By Mayer-Vietoris and induction on the minimal size of a finite affine covering of $\hX$, we are reduced to the case when $\hX$ is affine.
Let $\PP^1 = U_0 \cup U_1$ be the standard open covering. 
Then, for any $i=0,1$ and $j>1$, we have $H_{\Zar}^j (\hX \times U_{i},F_{\sX}) = 0$ since $\hX \times U_i$ is affine and $F_{\sX}$ is quasi-coherent by assumption.
Therefore, the Mayer-Vietoris long exact sequence is simplified as 
\begin{align*}
0 &\to H_{\Zar}^0 (\PP^1_{\hX},F) \to H_{\Zar}^0 (U_{0,\hX},F) \oplus H_{\Zar}^0 (U_{1,\hX},F) \to H_{\Zar}^0 (U_{01,\sX},F) \\
&\to H_{\Zar}^1 (\PP^1_{\hX},F) \to 0,
\end{align*}
where $U_{01} = U_0 \cap U_1$, and $(-)_{\hX} = (-) \times \hX$ (we omit the subscripts of $F$ for the simplicity of notation). 
Therefore, the right exactness of Eq.\eqref{eq:affine-ci} shows \[ H_{\Zar}^1 (\PP^1_{\hX},F_{\PP^1_{\hX}}) = 0 = H_{\Zar}^1 (\hX,F_{\sX})\] and the left exactness of  Eq.\eqref{eq:affine-ci} shows $H_{\Zar}^0 (\PP^1_{\hX},F_{\sX \boxtimes \bcube}) = H_{\Zar}^0 (\hX,F_{\sX})$.
\end{proof}

Now, we move on to the proof of the cube invariance of the cohomology of $\ulMO$.
We start with the following lemma.

\begin{lemm} \label{lemm:redNotOO}
Suppose that $A$ is reduced and $f$ is a nonzero divisor. 
Then we have 
\[ \ulMO(A[t], f) = \ulMO(A[t], ft). \]
\end{lemm}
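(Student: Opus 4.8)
The plan is to unpack Definition~\ref{defi:ulMORing} and reduce the statement to an ideal-theoretic identity in $A[t]$. Note first that $t$ is a nonzero divisor in $A[t]$, and that $ft$ is a nonzero divisor in $A[t]$ since $f$ is one in $A$; hence $A[t][f^{-1}]$ embeds into $A[t][(ft)^{-1}] = A[t][f^{-1}][t^{-1}]$, and inside the latter ring we may write
\[
\ulMO(A[t],f) = \tfrac1f\,\sqrt{(f)}, \qquad \ulMO(A[t],ft) = \tfrac{1}{ft}\,\sqrt{(ft)},
\]
where $\sqrt{(f)}$ and $\sqrt{(ft)}$ denote radicals taken inside $A[t]$. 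So the claim amounts to the equality of submodules $\tfrac1f\sqrt{(f)} = \tfrac{1}{ft}\sqrt{(ft)}$, equivalently $\sqrt{(ft)} = t\cdot\sqrt{(f)}$.

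First I would compute $\sqrt{(ft)}$. By the standard identity $\sqrt{IJ} = \sqrt I\cap\sqrt J$, applied to $I=(f)$ and $J=(t)$ in $A[t]$, we get $\sqrt{(ft)} = \sqrt{(f)}\cap\sqrt{(t)}$. Here the hypothesis that $A$ is reduced enters: since $A[t]/(t)\cong A$ is reduced, the ideal $(t)\subseteq A[t]$ is already radical, so $\sqrt{(t)} = (t)$ and thus $\sqrt{(ft)} = \sqrt{(f)}\cap(t)$.

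Next I would prove the cancellation statement: for $d\in A[t]$, one has $td\in\sqrt{(f)}$ if and only if $d\in\sqrt{(f)}$. The "if" direction is trivial since $\sqrt{(f)}$ is an ideal. For "only if", suppose $(td)^n = t^n d^n \in (f)A[t]$ for some $n$; passing to the quotient $A[t]/(f)A[t]\cong (A/fA)[t]$ (an isomorphism because $f\in A$), this says $t^n d^n = 0$ in $(A/fA)[t]$, and since $t$ is a nonzero divisor in the polynomial ring $(A/fA)[t]$ we conclude $d^n = 0$ there, i.e.\ $d^n\in(f)A[t]$, i.e.\ $d\in\sqrt{(f)}$. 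Combining this with the previous step, the condition $c\in\sqrt{(f)}\cap(t)$ is equivalent to $c = td$ with $d\in\sqrt{(f)}$; that is, $\sqrt{(ft)} = \sqrt{(f)}\cap(t) = t\cdot\sqrt{(f)}$, which is exactly what was needed.

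Everything here is elementary commutative algebra; the only place one must be a little careful is the cancellation of $t$, which rests on $t$ being a nonzero divisor modulo $f$ — and that is simply the identification $A[t]/(f)A[t]\cong (A/fA)[t]$, valid because $f$ lies in the coefficient ring $A$, not on any regularity property of $A/fA$. Reducedness of $A$ itself is used only to make $(t)$ a radical ideal of $A[t]$.
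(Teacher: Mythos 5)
Your proof is correct, but it takes a genuinely different route from the paper's. You reduce the statement to the ideal identity $\sqrt{(ft)}=t\cdot\sqrt{(f)}$ in $A[t]$ and prove it by combining $\sqrt{(ft)}=\sqrt{(f)}\cap\sqrt{(t)}$, the radicality of $(t)$ (which is exactly where reducedness of $A$ enters, via $A[t]/(t)\cong A$), and cancellation of $t$ modulo $(f)$ (via $A[t]/(f)A[t]\cong (A/fA)[t]$, in which $t$ is always a nonzero divisor). The paper instead argues element-by-element through the membership criterion of Lemma~\ref{lemm:memCritMO}: given $a\in\ulMO(A[t],ft)$ it writes $a=t^mb$ with $b\in A[t,\tfrac1f]$ of nonzero constant term, uses reducedness of $A$ (hence of $A[t,\tfrac1f]$) to see that $b^{n+1}$ still has nonzero constant term, deduces $m\geq 0$ from a degree count, and concludes with $t^{-n}A[t]\cap A[t,\tfrac1f]=A[t]$. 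Your version isolates the commutative algebra more cleanly --- the identity $\sqrt{(ft)}=t\sqrt{(f)}$ is reusable and makes the role of the reducedness hypothesis transparent --- while the paper's version stays closer to the membership criterion that drives the surrounding arguments in \S5. Both are complete.
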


\begin{proof}
By functoriality, there exists a canonical inclusion 
\[ \ulMO(A[t], f) \subseteq \ulMO(A[t], ft) \]
compatible with the inclusion 
$A[t, f^{-1}] \subseteq A[t, t^{-1}, f^{-1}]$.
We wish to show that this inclusion is sujective. 

Choose $a \in \ulMO(A[t], ft) \subseteq A[t, t^{-1}, f^{-1}]$. To show that $a \in \ulMO(A[t], f)$, by Lemma~\ref{lemm:memCritMO} it suffices to show that
\begin{equation} \label{equa:aftft}
af, (af)^na \in A[t]
\end{equation}
for some $n \geq 0$ (note this will imply that $a \in A[t, f^{-1}]$). We will show that for some $n$ these two elements are in both $A[t, \frac{1}{f}]$ and $t^{-n}A[t]$. Then since $t^{-n}A[t] \cap A[t, \frac{1}{f}] = A[t]$ we obtain Eq.\eqref{equa:aftft}.

Write $a = t^mb$ where $b \in A[t, \frac{1}{f}]$ has non-zero constant term and $m \in \ZZ$. Then we have 
\begin{equation} \label{equa:aftbt}
a(ft) = t^mb(ft) \textrm{ and } (aft)^na = (t^mbft)^n t^mb = t^{{mn+m+n}}b^{n+1}f^n \in A[t]
\end{equation}
for some $n \geq 0$ by Lemma~\ref{lemm:memCritMO}.
Since $b \in A[t, \frac{1}{f}]$ has non-zero constant term and $A$ is a reduced ring, $b^{n+1}$ also has non-zero constant term. So we have {$(m+1)(n+1) -1= mn+m+n\geq 0$, which implies $m+1 \geq 1$} and hence $m \geq 0$, which means $a = t^mb \in A[t, \frac{1}{f}]$. So our first goal, $af, (af)^na \in A[t, \frac{1}{f}]$ is achieved. The second goal, $af, (af)^na \in t^{-n}A[t, \frac{1}{f}]$ follows from Eq.\eqref{equa:aftbt}.
\end{proof}

\begin{prop} \label{prop:ulMOcube}
Suppose that $(A,f)$ is a modulus pair with $A$ reduced. Then 
\begin{equation} \label{equa:ulMOcube}
 0 \to \ulMO(A,f) \to \ulMO(A[t],f) \oplus \ulMO(A[\tfrac{1}{t}],f/t) \to \ulMO(A[t, \tfrac{1}{t}],f) \to 0
\end{equation}
is a short exact sequence. 
Consequently, for any $\sX \in \ulPSm_k$ with $\hX$ reduced and for $\tau \in \{\Zar,\Nis,\et\}$, we have
\[ H^n_{\tau}(\sX, \ulMO) = H^n_{\tau}(\sX {\boxtimes} \bcube, \ulMO). \]
\end{prop}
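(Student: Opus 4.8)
The plan is to deduce the exactness of Eq.\eqref{equa:ulMOcube} from the classical split short exact sequence
\[ 0 \to M \to M[t] \oplus M[\tfrac1t] \to M[t,\tfrac1t] \to 0 , \]
valid for any module (indeed any abelian group) $M$, applied to $M := \ulMO(A,f)$; the cohomological ``Consequently'' then follows formally. For that last step: by Theorem~\ref{theo:MOdefi} the presheaf $\ulMO$ is a quasi-coherent \'etale sheaf, and with $F=\ulMO$ the hypothesis Eq.\eqref{eq:affine-ci} of Lemma~\ref{lem:ci-criterion} is precisely Eq.\eqref{equa:ulMOcube} applied to the affine opens $\Spec A\subseteq\hX$ with principal modulus --- and these $A$ are reduced because $\hX$ is --- so Lemma~\ref{lem:ci-criterion} immediately yields $H^n_\tau(\sX,\ulMO)=H^n_\tau(\sX\boxtimes\bcube,\ulMO)$.

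For the short exact sequence itself I would first identify all three non-initial terms with the evident modules over $M$. One has $\ulMO(A[t],f)=M[t]$ by Lemma~\ref{lem:MOt}. Next, in $\ulMO(A[\tfrac1t],f/t)$ the symbol $\tfrac1t$ is a polynomial variable and $f/t$ is its product with $f$, so Lemma~\ref{lemm:redNotOO} --- this is exactly where the hypothesis that $A$ is reduced enters --- lets us drop that extra variable, $\ulMO(A[\tfrac1t],f/t)=\ulMO(A[\tfrac1t],f)$, and then Lemma~\ref{lem:MOt} gives $\ulMO(A[\tfrac1t],f)=M[\tfrac1t]$. Finally $A[t]\to A[t,\tfrac1t]$ is \'etale, being a localisation, so Lemma~\ref{lemm:etaleAffineCoh} gives $\ulMO(A[t,\tfrac1t],f)\cong\ulMO(A[t],f)\otimes_{A[t]}A[t,\tfrac1t]=M[t,\tfrac1t]$. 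Using the explicit description of the functoriality of $\ulMO$ from Lemma~\ref{lemm:affineEtQC}, one checks that under these identifications the two arrows of Eq.\eqref{equa:ulMOcube} become the diagonal inclusion and the difference map, so exactness reduces to the classical fact above.

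There is also a direct, element-wise argument (closer in spirit to Lemma~\ref{lemm:redNotOO}). Injectivity of the first arrow is clear since it is induced by $A[\tfrac1f]\hookrightarrow A[t,\tfrac1f]$. For exactness in the middle one observes that elements of $\ulMO(A[t],f)$ involve only non-negative powers of $t$, while --- since $A$ is reduced, so that $(\tfrac1t)$ is a radical ideal of $A[\tfrac1t]$ and hence $\ulMO(A[\tfrac1t],f/t)\subseteq\tfrac1f A[\tfrac1t]$ --- elements of $\ulMO(A[\tfrac1t],f/t)$ involve only non-positive powers of $t$; thus a section in the kernel of the second arrow lies in $A[\tfrac1f]$, and then Lemma~\ref{lemm:memCritMO} together with $A[\tfrac1f]\cap A[t]=A$ (inside $A[t,\tfrac1f]$) places it in $\ulMO(A,f)$. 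For surjectivity, given $z\in\ulMO(A[t,\tfrac1t],f)$ one uses the radical identity $\sqrt{I[t]}=\sqrt I[t]$ from the proof of Lemma~\ref{lem:MOt} to write $fz\in\sqrt{(f)A}\cdot A[t,\tfrac1t]$, splits $z=z_+ + z_-$ into its non-negative- and negative-degree parts, and verifies through Lemma~\ref{lemm:memCritMO} that $z_+\in\ulMO(A[t],f)$ and $-z_-\in\ulMO(A[\tfrac1t],f/t)$, so $z=z_+-(-z_-)$ lies in the image.

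The main obstacle is the middle identification, i.e.\ pinning down $\ulMO(A[\tfrac1t],f/t)$: this is the one place the reducedness hypothesis is genuinely needed --- for a non-reduced $A$ the module $\ulMO(A[\tfrac1t],f/t)$ is strictly larger than $M[\tfrac1t]$ (it picks up contributions from the nilradical of $A$, coming from terms $\tfrac{b_0}{f}\cdot\tfrac1t$ with $b_0$ nilpotent) and the sequence ceases to be exact in the middle. Everything else is either a direct appeal to the lemmas already established (\ref{lem:MOt}, \ref{lemm:redNotOO}, \ref{lemm:etaleAffineCoh}, \ref{lemm:memCritMO}) or the routine bookkeeping of matching the arrows coming from the standard covering of $\PP^1$ with the standard ones, for which Lemma~\ref{lemm:affineEtQC} suffices.
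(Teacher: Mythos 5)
Your argument is correct, and the cohomological ``Consequently'' is handled exactly as in the paper (quasi-coherence from Theorem~\ref{theo:MOdefi} plus Lemma~\ref{lem:ci-criterion}). For the exact sequence itself, the skeleton is shared — both you and the paper first invoke Lemma~\ref{lemm:redNotOO} (the one genuine use of reducedness) to replace $\ulMO(A[\tfrac1t],f/t)$ by $\ulMO(A[\tfrac1t],f)$ and then compare with the classical sequence for $A[\tfrac1f]$ — but your treatment of right-exactness is genuinely different. The paper never identifies the last term with $M[t,\tfrac1t]$; it views Eq.\eqref{equa:ulMOcube} only as a \emph{sub}sequence of the classical one, gets left and middle exactness from Lemma~\ref{lemm:memCritMO} together with $A[t]\cap A[\tfrac1t]=A$ (your second, element-wise argument reproduces this step essentially verbatim), and then proves surjectivity by contradiction: it takes $a=\sum_{i=m}^{\ell}a_it^i$ not in the image with $\ell>0$ minimal and shows, by inspecting the top-degree coefficient of $(af)^na$, that $a_\ell t^\ell$ already lies in $\ulMO(A[t],f)$, contradicting minimality. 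Your route instead computes all three outer terms outright as $M[t]$, $M[\tfrac1t]$, $M[t,\tfrac1t]$ with $M=\ulMO(A,f)$ — using Lemma~\ref{lem:MOt} twice, Lemma~\ref{lemm:redNotOO} once, and Lemma~\ref{lemm:etaleAffineCoh} for the Laurent ring — after which surjectivity is just the degree splitting $z=z_++z_-$ and the whole sequence is the split classical one. This is arguably cleaner and sidesteps the paper's leading-coefficient argument entirely; the only point requiring care, which you correctly flag, is that these identifications are equalities of submodules of $A[\tfrac1f][t,\tfrac1t]$ compatible with the transition maps of Lemma~\ref{lemm:affineEtQC}. (A minor slip in your closing remark: in the non-reduced example the extra elements of $\ulMO(A[\tfrac1t],f/t)$ are of the form $\tfrac{b_0}{f}\,t$ with $b_0$ nilpotent — positive powers of $t$, as in the paper's remark where the extra piece is $\langle\epsilon\rangle t$ — not $\tfrac{b_0}{f}\cdot\tfrac1t$; this does not affect your proof.)
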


\begin{proof}
Since $A$ is reduced, by Lem.~\ref{lemm:redNotOO}, we may replace $\ulMO(A[\tfrac{1}{t}],f/t)$ with \allowbreak $\ulMO(A[\tfrac{1}{t}],f)$. Then the sequence Eq.\eqref{equa:ulMOcube} is a subsequence of the exact sequence
\begin{equation} \label{equa:ulMOcubeInt}
 0 \to A[\tfrac{1}{f}] \to A[\tfrac{1}{f}][t] \oplus A[\tfrac{1}{f}][\tfrac{1}{t}]
 \to A[\tfrac{1}{f}][t, \tfrac{1}{t}] \to 0.
\end{equation}
Exactness of Eq.\eqref{equa:ulMOcube} at $\ulMO(A,f)$ follows from left exactness of Eq.\eqref{equa:ulMOcubeInt}.

Let's show exactness of Eq.\eqref{equa:ulMOcube} in the middle. Suppose that we have a cycle $(a, b)$ in the middle of Eq.\eqref{equa:ulMOcube}. By exactness of Eq.\eqref{equa:ulMOcubeInt}, $a = b$ is in $A[\tfrac{1}{f}]$. Moreover, by Lem.\ref{lemm:memCritMO} it satisfies $af, (af)^na \in A[t] \cap A[\tfrac{1}{t}] = A$ for some $n \geq 0$. Hence, it comes from an element of $\ulMO(A, f)$.

Now let's show right exactness of Eq.\eqref{equa:ulMOcube}. Suppose that it is not surjective. Choose an element $a = \sum_{i = m}^\ell a_it^i \in \ulMO(A[t, \tfrac{1}{t}], f)$ not in the image. If $\ell \leq 0$ or $m \geq 0$, then this element is in the image (because it satisfies $af, (af)^na \in A[t, \tfrac{1}{t}]$ for some $n$) so we must have $m < 0$ and $\ell > 0$. 
We prove that the condition $\ell > 0$ leads to a contradiction as follows.
Suppose that we have chosen an element such that $\ell$ is minimal. 
The highest degree term of $(af)^na$ is $(a_\ell t^\ell f)^{n} a_\ell t^\ell$. 
But then from $af, (af)^na \in A[t, \tfrac{1}{t}]$ we deduce that $a_\ell t^\ell f$ and $(a_\ell t^\ell f)^{n} a_\ell t^\ell$ are in $A[t]$, so $a_\ell t^\ell$ is in the image of $\ulMO(A[t], f)$. Since $a$ is not in the image, $a - a_\ell t^\ell$ is also not in the image, so $a$ did not have minimal $\ell$; a contradiction.

Finally, the second assertion in the statement follows from Lem.~\ref{lem:ci-criterion} since $\ulMO|_{\hX_{\et}}$ is quasi-coherent \'etale sheaf by Prop.~\ref{prop:fpqc}.
\end{proof}

\begin{rema}
The above exactness is false if $A$ is not reduced, as one sees immediately from the example $(A, f) = (k[\epsilon]/\epsilon^2, 1)$. Indeed, in this case we have
\begin{align*}
\ulMO(A[t], f) &= A[t] \\
\ulMO(A[\tfrac{1}{t}], f/t) &= A[\tfrac{1}{t}] + \langle \epsilon \rangle t \\
\ulMO(A[t, \tfrac{1}{t}], f) &= A[t, \tfrac{1}{t}] \\
\end{align*}
giving global sections of $A + \langle \epsilon \rangle t$, instead of $A$.
\end{rema}

\section{Transfers on {$\protect\ulMO$}} \label{sec:MOtransfers}

We observe that the structure of presheaf with transfers on $\OO$ recalled in Recollections~\ref{reco:Cor}\eqref{reco:Cor:Omega} induces a structure of presheaf with transfers on $\ulMO$.

\begin{lemma} \label{lemm:MOTransfers}
Let $\sX, \sY \in \ulMSm_k$ with $\hX$ normal, and $\alpha \in \hom_{\ulMCor_k}(\sX, \sY) \subseteq \hom_{\Cor_k}(\iX, \iY)$. Then there exists a unique map $\ulMO(\sY) \to \ulMO(\sX)$ making the following diagram commute:
\[\xymatrix{
\ulMO (\sY) \ar[r] \ar[d] & \ulMO (\sX) \ar[d] \\
\OO (\iY) \ar[r]^{\alpha^*} & \OO (\iX).
}\]
\end{lemma}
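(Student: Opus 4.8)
The plan is to reduce the existence of the transfer map to the descent square of Proposition~\ref{prop:hdescent}, using the explicit description of $\hom_{\ulMCor_k}(\sX,\sY)$.

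\emph{Uniqueness and first reductions.} Locally on $\hX$, $\ulMO(\sX)$ is a sub-$\OO_{\hX}$-module of $\OO_{\iX}$ (Definition~\ref{defi:ulMORing}), so $\ulMO(\sX)\to\OO(\iX)$ is injective; hence the map in the statement is unique if it exists, and existence is equivalent to the assertion that the transfer map $\alpha^*\colon\OO(\iY)\to\OO(\iX)$ attached to $\alpha$ by the structure $\OO_{\tr}$ of Recollection~\ref{reco:Cor}\eqref{reco:Cor:Omega} carries $\ulMO(\sY)$ into $\ulMO(\sX)$ inside $\OO(\iX)$. Since $\hX$ is normal it is a finite coproduct of integral normal schemes, so by additivity of $\ulMCor_k$ we may assume $\hX$, and hence $\iX$, is integral; replacing $\sY$ by an isomorphic modulus pair, we may also assume $\hY$ is normal, so that $\ulMO(\sX)$ and $\ulMO(\sY)$ are the genuine values of the (ring-theoretically defined) sheaf of Theorem~\ref{theo:MOdefi}.

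\emph{Unwinding $\alpha$.} By Recollection~\ref{reco:Cor} there is a proper surjective $h\colon\hW\to\hX$ with $\hW$ integral — and, replacing $\hW$ by its normalisation, which still indexes the colimit, normal — together with a modulus pair $\sW=(\hW,h^*\mX)$ with interior $\iW=h^{-1}(\iX)$, morphisms $f_1,\dots,f_r\colon\sW\to\sY$, and integers $n_i$, such that $\alpha$ pulls back along $\iW\to\iX$ to $\sum_i n_i f_i^\circ$ in $\Cor_k$. (Recall that $\ulMO$ extends to a sheaf on not-necessarily-smooth modulus pairs, and that Proposition~\ref{prop:hdescent} holds in that generality.) Since $\iW$ is normal (open in the normal scheme $\hW$), each right-fraction factorisation $f_i=g_i\circ\sigma_i^{-1}$ — with $\sigma_i\colon\sW_i\to\sW$ an abstract admissible blowup and $g_i\colon\sW_i\to\sY$ ambient — may be taken with $\hW_i$ normal. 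Then $\sigma_i^*\colon\ulMO(\sW)\to\ulMO(\sW_i)$ is an isomorphism by Proposition~\ref{prop:hdescent} (normal target, isomorphism on interiors), so $f_i^*:=(\sigma_i^*)^{-1}\circ g_i^*\colon\ulMO(\sY)\to\ulMO(\sW)$ is defined, and — using that $\ulMO\hookrightarrow\OO((-)^\circ)$ is natural on ambient morphisms (Lemma~\ref{lemm:affineEtQC}) and an isomorphism on $\sigma_i$ — the composite $\ulMO(\sY)\xrightarrow{f_i^*}\ulMO(\sW)\hookrightarrow\OO(\iW)$ equals $\ulMO(\sY)\hookrightarrow\OO(\iY)\xrightarrow{(f_i^\circ)^*}\OO(\iW)$.

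\emph{Conclusion.} Fix $a\in\ulMO(\sY)\subseteq\OO(\iY)$. By functoriality of $\OO_{\tr}$, the pullback of $\alpha^*(a)\in\OO(\iX)$ along $\iW\to\iX$ is $\sum_i n_i(f_i^\circ)^*(a)\in\OO(\iW)$, which by the previous paragraph is the image of $\sum_i n_i f_i^*(a)\in\ulMO(\sW)$. Thus $\alpha^*(a)\in\OO(\iX)$ has image inside the subgroup $\ulMO(\sW)\subseteq\OO(\iW)$. Since $h\colon\hW\to\hX$ is proper surjective with normal target $\hX$ and $h^*\mX$ is the induced modulus, Proposition~\ref{prop:hdescent} says that
\[\xymatrix@R=14pt{
\ulMO(\sW)\ar[r] & \OO(\iW)\\
\ulMO(\sX)\ar[u]\ar[r] & \OO(\iX)\ar[u]
}\]
is Cartesian; hence $\alpha^*(a)\in\ulMO(\sX)$, which gives the desired map $\ulMO(\sY)\to\ulMO(\sX)$, unique by the first paragraph.

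\emph{Main difficulty.} There is no serious new input beyond Proposition~\ref{prop:hdescent} and the transfers on $\OO$; the work is organisational. The point requiring care is the passage through normalisations, ensuring that each occurrence of $\ulMO$ is the genuine sheaf of Theorem~\ref{theo:MOdefi} so that Proposition~\ref{prop:hdescent} applies verbatim to both the abstract admissible blowups $\sigma_i$ and to $h$, together with the (routine) check that $\ulMO$-pullback along the possibly non-ambient $f_i$ is compatible with $\OO$-pullback on interiors — reducing via the right calculus of fractions to the ambient case of Lemma~\ref{lemm:affineEtQC} and blowup invariance.
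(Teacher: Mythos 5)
Your proof is correct and follows essentially the same route as the paper's: pass to a normalised proper surjective $\sW\to\sX$ witnessing the modulus correspondence, observe that $\ulMO(\sY)\to\ulMO(\sW)$ exists and is compatible with pullback on interiors, and then descend to $\ulMO(\sX)$ via the Cartesian square of Proposition~\ref{prop:hdescent}. The only difference is that you spell out in detail the step the paper dismisses with ``certainly exists'' (the reduction of the possibly non-ambient $f_i$ to ambient morphisms and \aab{}s via the calculus of fractions); the normalisation of $\hY$ is unnecessary but harmless.
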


\begin{rema}
A modulus pair with non-smooth interior will appear in the proof. One checks directly that 
Lemma~\ref{lemm:affineEtQC}, 
Proposition~\ref{prop:fpqc}, 
Lemma~\ref{lemm:etaleAffineCoh}, 
Theorem~\ref{theo:MOdefi}, 
Lemma~\ref{lemm:memCritMO}, 
Lemma~\ref{lemm:intersectHeightOne}, and 
Proposition~\ref{prop:hdescent} all work verbatim for pairs $(\hX, \mX)$ wth $\hX$ Noetherian normal, and $\mX$ an effective Cartier divisor. In fact these work even more generally than that, cf.Remark~\ref{rema:baby}.
\end{rema}

\begin{proof}
By definition, Rec.\ref{reco:Cor}(3), there is a morphism of modulus pairs $\sW \to \sX$ such that $\hW$ is integral, $\hW \to \hX$ is proper surjective, $\iW \to \iX$ is finite, and the composition $\sW \to \sX \to \sY$ is a finite sum of morphisms of modulus pairs. Normalising, we can assume $\hW$ is integrally closed in $\iW$. As such, the morphism $(\ast)$ in the diagram

\[ \xymatrix{
\OO(\iY) \ar[r] &\OO(\iX) \ar[r]^{\subseteq} & \OO(\iW) \\
\ulMO(\sY) \ar@{-->}[r]^{(\ast\ast)} \ar@/_12pt/@{-->}[rr]_{(\ast)} \ar[u]_{\cup|} &\ulMO(\sX) \ar[r] \ar[u]_{\cup|} & \ulMO(\sW) \ar[u]_{\cup|}
} \]
certainly exists, and is unique by injectivity of $\ulMO(\sW) \subseteq \OO(\iW)$. By Proposition~\ref{prop:hdescent} the square on the right is Cartesian, so the morphism $(\ast\ast)$ also exists and is unique.
\end{proof}

\section{Hodge realisation for $\protect\ulMO$}

We now combine the above to prove our main theorem for $\ulMO$, Theorem~\ref{theo:MORealisation}. The idea is that $R\Gamma_{\Nis}(-, \ulMO)$ can be equipped with transfers, and should be blow up invariant and cube invariant. Sadly, Example~\ref{exam:counterGabber} below shows that $R\Gamma_{\Nis}(-, \ulMO)$ is not blowup invariant without some stricter hypotheses. We use normal crossings. 

\begin{nota}
Write $\ulMSm_k^{\nc} \subseteq \ulMSm_k$ (resp. $\ulMCor_k^{\nc} \subseteq \ulMCor_k$) for the full subcategory of quasi-projective normal crossings modulus pairs. 
\end{nota}

\begin{rema} \label{rema:NCMDM}
In general, we have the following.
\begin{enumerate}
 \item If $\sY \to \sX$ is an abstract admissible blowup, then {$\sY \otimes \bcube \to \sX \otimes \bcube$} is again an abstract admissible blowup.\footnote{Recall that $\sX \otimes \bcube$ has total space $\hX \times \PP^1$ and modulus $\mX \times \PP^1 + \hX \times \{\infty\}$.} So any functor on $\ulMCor_k$ which sends $\sX \otimes \bcube \to \sX$ to an isomorphism, also sends $\sY \otimes \bcube \to \sY$ to an isomorphism.
 
 \item If $\sX \in \ulMCor_k^{\nc}$ then $\sX \otimes \bcube \in \ulMCor_k^{\nc}$.
\end{enumerate}

If $k$ satisfies (RoS), then we also have:
\begin{enumerate}
 \item The inclusions $\ulMSm_k^{\nc} \subseteq \ulMSm_k$ and $\ulMCor_k^{\nc} \subseteq \ulMCor_k$ are equivalences of categories.

 \item Consequently, the canonical comparison functor 
\[
\frac{D(\Shv_{\ulMNis}(\ulMCor_k^{\nc}))}{\biggl \langle \ZZ_{\tr}(\sX \otimes \bcube) \to \ZZ_{\tr}(\sX) : \sX \in \ulMCor_k^{\nc} \biggr \rangle}
\to
\frac{D(\Shv_{\ulMNis}(\ulMCor_k))}{\biggl \langle \ZZ_{\tr}(\sX\otimes \bcube) \to \ZZ_{\tr}(\sX) : \sX \in \ulMCor_k \biggr \rangle}
\]
is an equivalence of categories. 
\end{enumerate}
\end{rema}

\begin{theo} \label{theo:MORealisation}
Suppose that $k$ satisfies (RoS) and (WF), Def.\ref{defi:RosWF}, e.g., $char(k) {=} 0$. 
Then there is a unique object $\RMO \in \ulMDM_k^{\eff}$ such that for $\sX$ with normal crossings we have
\[ \hom_{\ulMDM_k^{\eff}}(M(\sX), \RMO[n]) \cong H^n_{\Zar}(\hX, \sqrt{\sI} \otimes \sI^{-1}). \]
\end{theo}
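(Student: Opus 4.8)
The plan is to assemble the results of Sections~\ref{sec:ulMO}--\ref{sec:MOtransfers} into an object of the explicit model \eqref{eq:MDMsub} for $\ulMDM_k^{\eff}$, working throughout with normal crossings modulus pairs --- which is harmless by Remark~\ref{rema:NCMDM} since $k$ satisfies (RoS). First I would package the transfers: by Lemma~\ref{lemm:MOTransfers}, for all $\sX,\sY\in\ulMSm_k^{\nc}$ (so $\hX$ is smooth, hence normal) and every $\alpha\in\hom_{\ulMCor_k}(\sX,\sY)$ there is a \emph{unique} map $\alpha^*\colon\ulMO(\sY)\to\ulMO(\sX)$ lying over the Voevodsky transfer on $\OO$, and uniqueness forces additivity and functoriality. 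Thus $\ulMO$ upgrades canonically to an additive presheaf on $\ulMCor_k^{\nc}$; by Theorem~\ref{theo:MOdefi} it is a quasi-coherent \'etale --- in particular Nisnevich --- sheaf on each small site, so once we know (next paragraph) that it descends to the localisation $\ulMSm_k^{\nc}$, it defines an object of $\Shv_{\ulMNis}(\ulMCor_k^{\nc})$, which we view in degree $0$ in $D(\Shv_{\ulMNis}(\ulMCor_k^{\nc}))$.

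The heart of the argument --- and the step I expect to be the main obstacle --- is blow-up invariance and the ensuing descent. We must show that, restricted to normal crossings pairs, the complex $R\Gamma_{\Nis}(-,\ulMO)$ sends every abstract admissible blowup to a quasi-isomorphism. By Appendix~\ref{sec:MtauCoh} together with Theorem~\ref{theo:MOdefi} (which identify $H^\bullet_{\ulMNis}(\sX,\ulMO)$ with $\colim_{\sX'\to\sX}H^\bullet_{\Zar}(\hX',\ulMO_{\hX'})$ over the filtered poset of abstract admissible blowups of $\sX$), it suffices to see that this colimit collapses to $H^\bullet_{\Zar}(\hX,\ulMO_{\hX})$ when $\sX$ is normal crossings. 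Here (RoS) makes the blowups with normal crossings total space cofinal in the poset, and (WF), in the form recalled in Appendix~\ref{appendix-a}, links any two such modifications by a zig-zag of blowups and blowdowns along smooth centres having normal crossings with the boundary; each step induces an isomorphism on $\ulMO$-cohomology by Proposition~\ref{prop:MOBUinv}, so all transition maps in the cofinal subsystem are isomorphisms and the colimit collapses as wanted. The delicate point is the bookkeeping required to stay inside the normal crossings setting while invoking weak factorisation; that one genuinely cannot dispense with the normal crossings hypothesis is the content of Example~\ref{exam:counterGabber}. Note that all the genuinely new input for this step is already isolated in Proposition~\ref{prop:MOBUinv} and the appendices.

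It remains to verify cube invariance and read off the formula. Proposition~\ref{prop:ulMOcube} gives, for $\hX$ reduced (in particular smooth), exactness of the sequence \eqref{eq:affine-ci} for $F=\ulMO$, hence by Lemma~\ref{lem:ci-criterion} that the first projection induces $H^n_\tau(\sX,\ulMO)\xrightarrow{\sim}H^n_\tau(\sX\boxtimes\bcube,\ulMO)$; since $\sX\otimes\bcube$ and $\sX\boxtimes\bcube$ are the same modulus pair, the object $\ulMO$ has cube-invariant hypercohomology, so by the normal crossings version of \eqref{eq:MDMsub} (and Remark~\ref{rema:NCMDM}) it defines an object $\RMO\in\ulMDM_k^{\eff}$. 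For the hom formula, combine \eqref{equa:ExtCoh}, the identification \eqref{eq:MDMsub}, and the previous paragraph:
\[
\hom_{\ulMDM_k^{\eff}}(M(\sX),\RMO[n])\;\cong\;\mathbb{H}^n_{\ulMNis}(\sX,\RMO)\;\cong\;H^n_{\ulMNis}(\sX,\ulMO)\;\cong\;H^n_{\Zar}(\hX,\ulMO_{\hX}),
\]
and finally $\ulMO_{\hX}\cong\sqrt{\sI}\otimes\sI^{-1}$, since locally for $\mX=(f)$ one has $\ulMO(A,f)=\tfrac1f\sqrt{(f)}=\sqrt{(f)}\otimes_A\tfrac1fA$ by Definition~\ref{defi:ulMORing}. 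Uniqueness of $\RMO$ follows because the $M(\sX)$ with $\sX$ normal crossings form a set of compact generators of $\ulMDM_k^{\eff}$ and, the construction being functorial in $\hX$, the displayed isomorphism can be taken natural, so $\RMO$ represents a well-defined cohomological functor.
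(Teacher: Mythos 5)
Your proposal is correct and follows essentially the same route as the paper's proof: transfers via Lemma~\ref{lemm:MOTransfers}, the identification $H^\bullet_{\ulMNis}(\sX,\ulMO)\cong\colim_{\sY\to\sX}H^\bullet_{\Nis}(\sY,\ulMO)$ from Proposition~\ref{prop:MZarIscolimZar}, collapse of the colimit for normal crossings $\sX$ using Proposition~\ref{prop:aabAreBu}, (RoS), (WF) and Proposition~\ref{prop:MOBUinv}, cube invariance from Proposition~\ref{prop:ulMOcube}, and Remark~\ref{rema:NCMDM} to pass between the normal crossings and general settings. The only additions are your explicit treatment of uniqueness and of the identification $\ulMO_{\hX}\cong\sqrt{\sI}\otimes\sI^{-1}$, which the paper leaves implicit.
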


\begin{proof}
Consider $\ulMO \in \PSh(\ulMSm_k)$ from Proposition~\ref{prop:hdescent}. 
By Lemma~\ref{lemm:MOTransfers} this factors through $\ulMCor_k$. Consider its image $\RMO$ in $D(\Shv_{\ulMNis}(\ulMCor_k))$. For any $\sX \in \ulMCor_k$ we have 
\begin{align}
\hom_{D(\Shv_{\ulMNis}(\ulMCor_k))}(\ZZ_{\tr}(\sX), \RMO[n])
&= 
Ext^n(\ZZ_{\tr}(\sX), \ulMO)
\nonumber
\\
&\stackrel{Eq.\eqref{equa:ExtCoh}}{=} 
H_{\ulMNis}^n(\sX, \ulMO)
\label{eq:homDMNSTcolim}
\\
&\stackrel{Prop.\ref{prop:MZarIscolimZar}}{=} 
\colim_{\sY \to \sX} H_{\Nis}^n(\sY, \ulMO)
\nonumber
\end{align}
where the colimit is over abstract admissible blowups. Consider the case that $\sX \in \ulMSm_k^{\nc}$. By Proposition~\ref{prop:aabAreBu} we can assume all $\hY$ are actual blowups of $\hX$, and by (RoS) we can assume that all $\sY$ are normal crossings. By (WF), such $\sY \to \sX$ are zig zags of abstract admissible blowups $\sV \to \sW$ such that $\hV \to \hW$ is a blowup in a regular centre that has normal crossings with $\mW$. Since $\sX \in \ulMSm_k^{\nc}$ by Proposition~\ref{prop:MOBUinv} (and Theorem~\ref{theo:MOdefi}) the functor $H_{\Nis}^n(-, \ulMO)$ sends such $\sV \to \sW$ to isomorphisms. So 
\[ \textrm{Eq.\eqref{eq:homDMNSTcolim}} 
 \cong H_{\Nis}^n(\sX, \ulMO); \qquad \textrm{ for normal crossings } \sX. \]
Finally, by Proposition~\ref{prop:ulMOcube} we deduce that Eq.\eqref{eq:homDMNSTcolim} is cube invariant, at least for normal crossings $\sX$. But this is sufficient to deduce that it is cube invariant for all $\sX \in \ulMSm_k$, Rem.\ref{rema:NCMDM}. So $\RMO$ lies in the full subcategory $\ulMDM_k^{\eff} \subseteq D(\Shv_{\ulMNis}(\ulMCor_k))$.
\end{proof}

\section{Post-script}

Here we collect some odds and ends.

Here is a proof of the claim in Example~\ref{exam:ulMOsncd}.

\begin{lemm}\label{lemm:flatMO}
Let $A$ be a UFD and $f$ a non-zero divisor. Then the $A$-submodule $\ulMO(A) \subset A[f^{-1}]$ is free of rank one. In particular, $\ulMO(A)$ is a flat $A$-module.
\end{lemm}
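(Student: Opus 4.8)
The lemma as stated writes $\ulMO(A)$, but the correct reading (consistent with Definition~\ref{defi:ulMORing}) is $\ulMO(A,f)$; I proceed with that understanding.

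The plan is to exhibit an explicit generator. First I would reduce to the case where $f$ is a product of distinct irreducibles raised to positive powers. Since $A$ is a UFD, write $f = u\, f_1^{r_1}\cdots f_m^{r_m}$ with $u$ a unit, the $f_j$ pairwise non-associate irreducibles, and $r_j \geq 1$ (if $f$ is itself a unit the claim is trivial, as then $\ulMO(A,f) = A$). Replacing $f$ by the unit multiple $f_1^{r_1}\cdots f_m^{r_m}$ changes neither $A[f^{-1}]$ nor the submodule $\ulMO(A,f)$, so assume $f = f_1^{r_1}\cdots f_m^{r_m}$. Then $\sqrt{(f)} = (f_1\cdots f_m)$: indeed $\sqrt{(f)}$ is the intersection of the minimal primes over $(f)$, which in a UFD are exactly the principal primes $(f_j)$, and $\bigcap_j (f_j) = (f_1\cdots f_m)$ because the $f_j$ are coprime (a standard UFD fact: an element divisible by each of pairwise non-associate irreducibles is divisible by their product).

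Next I would identify the module. By definition $\ulMO(A,f) = \tfrac{1}{f}\sqrt{(f)} = \tfrac{1}{f}(f_1\cdots f_m)A = \tfrac{1}{f_1^{r_1-1}\cdots f_m^{r_m-1}}A$ as a submodule of $\Frac(A) \supseteq A[f^{-1}]$. Set $e := \tfrac{1}{f_1^{r_1-1}\cdots f_m^{r_m-1}} = \tfrac{f_1\cdots f_m}{f}$. Then $\ulMO(A,f) = A\cdot e$, and the map $A \to \ulMO(A,f)$, $a \mapsto a e$, is an isomorphism of $A$-modules: it is visibly surjective, and injective because $e$ is a nonzero element of the domain $\Frac(A)$, so $ae = 0$ forces $a = 0$. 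Hence $\ulMO(A,f)$ is free of rank one, and in particular flat.

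The only step requiring a little care is the claim $\sqrt{(f)} = (f_1\cdots f_m)$, which is where the UFD hypothesis is genuinely used — both to know the minimal primes over $(f)$ are principal (generated by the irreducible factors) and to conclude $\bigcap_j(f_j) = (f_1\cdots f_m)$ via coprimality of the $f_j$. Everything else is formal bookkeeping. This also makes transparent the computation recorded in Example~\ref{exam:ulMOsncd}, taking $A = \QQ[x_1,\dots,x_n]$ and $f = x_1^{r_1}\cdots x_i^{r_i}$.
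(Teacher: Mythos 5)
Your proof is correct and follows essentially the same route as the paper's: factor $f$ into irreducibles using the UFD hypothesis, compute $\sqrt{(f)} = (f_1\cdots f_m)$, and identify $\ulMO(A,f)$ as the free module generated by $\tfrac{1}{f_1^{r_1-1}\cdots f_m^{r_m-1}}$. You supply slightly more detail than the paper (the unit case, and the justification of $\sqrt{(f)} = (f_1\cdots f_m)$ via minimal primes), but the argument is the same.
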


\begin{proof}
It suffices to show that $\ulMO(A,f) = \{a/f \in A[f^{-1}] : a \in \sqrt{(f)}\}$ is a free $A$-module of rank one.
Since $A$ is a UFD, there exists a decomposition $f = p_1^{m_1}\cdots p_n^{m_n}$, where $p_i$ are irreducible elements in $A$ and $m_i >0$ for all $i=1,\dots,n$. 
Then we have $\sqrt{(f)} = (p_1\cdots p_n) \subset A$, and hence
\[
\ulMO(A,f) = \frac{1}{p_1^{m_1-1}\cdots p_n^{m_n-1}} \cdot A \subset A{[f^{-1}]}.
\]
Since $p_1\cdots p_m$ is a non-zerodivisor as a factor of $f$, we conclude that $\ulMO(A,f) = (p_1^{m_1-1}\cdots p_n^{m_n-1})^{-1}$ is an invertible sheaf on $\hX$.
\end{proof}

Here is a remark about the more general setting.

\begin{rema} \label{rema:baby}
We have already remarked that 
Lemma~\ref{lemm:affineEtQC}, 
Proposition~\ref{prop:fpqc}, 
Lemma~\ref{lemm:etaleAffineCoh}, 
Theorem~\ref{theo:MOdefi},
and
Lemma~\ref{lemm:memCritMO} 
work for general modulus pairs (i.e., $\hX$ a qcqs scheme and $\mX$ an effective Cartier divisor). 

Lemma~\ref{lemm:intersectHeightOne} is a kind of valuative criterion for global sections. There is a much more general version of this lemma. The more general version is valid for any ring $A$ equipped with a nonzero divisor $f$, and the local rings $A_\p$ are replaced with local rings of the relative Riemann-Zariski space, denoted $Val_{\Spec A[f^{-1}]}(\Spec(A))$ in Temkin's article, \cite{Tem11}.  As such Proposition~\ref{prop:hdescent} also holds for general modulus pairs, but with the restriction that $\OO_{\hX}$ be integrally closed in $j_*\OO_{\iX}$, where $j:\iX \subseteq \hX$ is the inclusion.


For Lemma~\ref{lemm:redNotOO}, Proposition~\ref{prop:ulMOcube} one must further assume that the total space is reduced in the general statements.
\end{rema}

Here is a counter-example showing that in Proposition~\ref{prop:MOBUinv} we need to at least assume that $\hX$ has rational singularities inside the divisor.

\begin{exam}[Gabber] \label{exam:counterGabber}
Suppose $f: \hY \to \hX$ is a resolution of singularities of some $\hX$ with non-rational singularities. That is, such that $R^if_*\OO_{\hY} \neq 0$ for some $i > 0$. Suppose that $\hX$ admits a reduced effective Cartier divisor $\mX$ containing the singularities such that $\mY = f^*\mX$ is also reduced. Then $\ulMO_{(\hY, \mY)} = \OO_{\hY}$ and $\ulMO_{(\hX, \mX)} = \OO_{\hX}$ so by assumption $ \ulMO_{\sX} \to Rf_*\ulMO_{\sY}$ is not an isomorphism. 

An explicit example can be produced by considering the affine cone over an elliptic curve (for example): Suppose that $E \subseteq \PP^2_k$ is a smooth curve with $H^1(E, \OO_E) \neq 0$, e.g., an elliptic curve, and choose a $\PP^1_k \subseteq \PP^2_k$ such that $\PP^1_k \cap E$ is reduced. Let $CE \subseteq \AA^3_k$ be the affine cone over $E$ (we recall a construction below). Let $BE = Bl_{CE} \{0\} \to CE$ be the blowup of the singular point of $CE$. Equip $CE$ and $BE$ with the pullbacks $\mC$, $\mB$ of the effective Cartier divisor $\AA^2_k \subseteq \AA^3_k$ corresponding to the $\PP^1_k \subseteq \PP^2_k$ chosen above, and set $\sB = (BE, \mB)$, $\sC = (CE, \mC)$. We claim that 
\begin{equation} \label{equa:BECEiso}
H^1_{\Zar}(BE, \ulMO_{\sB}) \supseteq H_{\Zar}^1(E, \OO_E) \neq 0
\end{equation}
and therefore
\[ \ulMO_{\sC} \to Rf_*\ulMO_{\sB} \]
is not an isomorphism where $f: BE \to CE$ is the canonical morphism. Note $CE$ is affine, so $H^1_{\Zar}(BE, \ulMO_{\sB})$ is precisely the space of global sections of the quasi-coherent sheaf $R^1f_*\ulMO_{\sB}$.

We recall a construction of $BE, CE$. To begin with, recall that the blowup $Bl_{\AA^3}\{0\}$ of $\AA^3$ in the origin is canonically identified with the total space of the line bundle $\OO_{\PP^2}(1)$ on $\PP^2$ via a retraction $\pi: Bl_{\AA^3}\{0\} \to \PP^2$ to the exceptional divisor $\PP^2 \subseteq Bl_{\AA^3}\{0\}$.\footnote{Indeed, classically $Bl_{\AA^3} \{0\}$ is the variety of pairs $(L, x)$ such that $L \subseteq \AA^3$ is a line through the origin and $x \in L$. The projection $Bl_{\AA^3} \{0\} \to \AA^3$ sends $(L, x)$ to $x$, and the retraction $Bl_{\AA^3} \{0\} \to \PP^2$ sends $(L, x)$ to $L$. The exceptional divisor $\PP^2 \subseteq Bl_{\AA^3} \{0\}$ is the set $\{(L, x)\ |\ x = 0\}$.
} Then one can define $BE \subseteq Bl_{\AA^3} \{0\}$ and $CE \subseteq \AA^3$ by forming the Cartesian square on the left and the surjection $f$. We also have the further Cartesian square on the right coming from the inclusion of the exceptional divisor $\PP^2 \subseteq Bl_{\AA^3}\{0\}$.
\[ \xymatrix{
E \ar[d]_\iota & \ar[l]_{\theta} BE \ar[d]_\phi \ar[r]^f & CE \ar[d] && BE \ar[d]_\phi & \ar[l] \ar[d]^\iota E \\
\PP^2 & \ar[l]^\pi \ar[r]_g Bl_{\AA^3}\{0\} & \AA^3 && Bl_{\AA^3} \{0\} & \ar[l] \PP^2
} \]
Note $\PP^2 \cup \pi^{-1}\PP^1 = g^{-1}\AA^2 \subseteq Bl_{\AA^3} \{0\}$, so $E \cup \theta^{-1}(\PP^1 \cap E) = \mB = f^{-1}\mC$.

For the inclusion of Eq.\ref{equa:BECEiso}, first note that since $\PP^1_k \cap E$ is reduced, the effective Cartier divisor $\mB$ is reduced. Indeed, $\pi$ and therefore $\theta$ is an $\AA^1$-bundle. So
\begin{equation} \label{equa:MOOB}
\ulMO_{\sB} \cong \OO_{BE}.
\end{equation}
Since affine schemes have no higher coherent cohomology, we have $R^j\theta_*\OO_{BE} = 0$ for $j > 0$ and so the spectral sequence $H^i_{\Zar}(E, R^j\theta_*\OO_{BE}) \implies H^{i+j}_{\Zar}(BE, \OO_{BE})$ gives an isomorphism 
\begin{equation} \label{equa:BEE}
H^1_{\Zar}(BE, \OO_{BE}) \cong H^1_{\Zar}(E, \theta_*\OO_{BE}).
\end{equation}
Then by definition\footnote{Indeed, we have identified $Bl_{\AA^3} \{0\}$ with the total space of $\OO_{\PP^2}(1)$, i.e., with $\uSpec \oplus_{i \geq 0} \OO_{\PP^2}(i)$, and $BE$ is the fibre product.} $BE = \uSpec \oplus_{i \geq 0} \OO_E(i)$ which contains the direct summand $\OO_E$:
\begin{equation} \label{equa:OOEOOBE}
\theta_*\OO_{BE} \cong \OO_E \oplus \sF 
\end{equation}
Combining Eq.\eqref{equa:MOOB}, Eq.\eqref{equa:BEE}, and Eq.\eqref{equa:OOEOOBE} gives Eq.\eqref{equa:BECEiso}.
\end{exam}

\begin{rema}
Note that $CE$ in Example~\ref{exam:counterGabber} is normal, since it is regular in codimension one, and complete intersection. The singularity is contained inside the divisor $\mC$. Of course, $BE$ is also normal, since it's an affine bundle over a smooth curve, and therefore also smooth.
\end{rema}

\appendix

\section{Resolution of singularities and weak factorisations}\label{appendix-a}

\begin{defi} \label{defi:NC}
Let $\sX$ be a modulus pair and $Z \subseteq \hX$ a closed subscheme. 
We will say that \emph{$Z$ has strict normal crossings with $\mX$} if for every point $x \in \hX$ the local ring $\OO_{\hX, x}$ is regular, and there exists a regular system of parameters\footnote{Cf.\cite[00KU]{stacks-project}.} $t_1, ..., t_n \in \OO_{\hX, x}$ such that
\[
\mX|_{\Spec(\OO_{\hX, x})} = \prod_{a \in A} t_a^{r_a}, \qquad \textrm{ and } \qquad Z|_{\Spec(\OO_{\hX, x})} = \Spec(\mathcal{O}_{\hX,x} / \langle t_b : b \in B \rangle)
\]
for some $r_a > 0$ and $A, B \subseteq \{1, ..., n\}$.

We will say that \emph{$Z$ has normal crossings with $\mX$} if there exists an \'etale covering $\hV \to \hX$ such that $Z \times_{\hX} \hV$ has strict normal crossings with $\mV$.

We say that $\sX$ is a \emph{normal crossings modulus pair} if $\varnothing$ has normal crossings with $\mX$.
\end{defi}

\begin{rema}
Note, $A \cap B \neq \varnothing$ is allowed; in particular, $Z \subseteq \mX$ is allowed. 
\end{rema}

\begin{defi}[{cf.\cite[\S 1.2]{AT19}}]\label{def-wf}
Suppose that $f: \sY \to \sX$ is a abstract admissible blowup between normal crossings modulus pairs, such that $\hY \to \hX$ is an actual blowup of noetherian qe regular schemes. 
A \emph{weak factorisation} of $\sY \to \sX$ is a factorisation
of $f$ in $\ulMSCH$\footnote{Cf.\cite[Def.1.23]{KM21}.}
\[ \sY = \sV_0 \stackrel{s_1}{\cong} \sV_1 \stackrel{s_2}{\cong} \sV_2 \cong \dots \stackrel{s_l}{\cong} \sV_l = \sX \]
such that for each $i = 1, \dots, l$, either $s_i$ or $s_i^{-1}$ is an abstract admissible blowup in $\ulPSCH$ whose total space $\sV_{i-1} \to \sV_i$ (resp. $\sV_{i-1} \leftarrow \sV_i$) is the blowup of a regular closed subscheme which has normal crossings with $\mV_i$ (resp. $\mV_{i-1}$).
\end{defi}

\begin{defi} \label{defi:RosWF}
Consider the following properties that a field $k$ might satisfy.
\begin{enumerate}
 \item[(RoS)] For every $\sX \in \ulPSm_k$, there exists an abstract admissible blowup $\sY \to \sX$ such that $\sY$ is normal crossings and $\hY \to \hX$ is an actual blowup.
 
 \item[(WF)] Every abstract admissible blowup $f: \sY \to \sX$ in $\ulPSm_k$ such that $\hY, \hX$ are smooth and $\hY \to \hX$ is an actual blowup, admits a weak factorisation.
\end{enumerate}
\end{defi}

\begin{theo}[Resolution of Singularities, {\cite[Thm.1.1]{Tem08}, \cite{Hir64}}] \label{theo:ROS}
Let $X$ be a Noetherian quasi-excellent integral scheme of characteristic zero. Then $X$ admits a semi-strict embedded resolution of singularities. In particular, for every closed subscheme $Z \subseteq X$ there is a blowup $f: X' \to X$ with centre disjoint from the regular locus of $X$, such that $X'$ is regular, $Z \times_X X'$ is a normal crossings divisor. 
\end{theo}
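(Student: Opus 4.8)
The plan is essentially to cite the input and then do the bookkeeping needed for the ``in particular''. The first assertion --- that a Noetherian quasi-excellent integral scheme $X$ of characteristic zero admits a semi-strict embedded resolution of singularities --- is \cite[Thm.1.1]{Tem08}, and its proof reduces the quasi-excellent case to Hironaka's resolution \cite{Hir64} of algebraic varieties over a characteristic-zero field. I would not reproduce any of this: it is the entire substance of the statement.

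What remains is to unwind the conclusion. Recall that a semi-strict embedded resolution of a closed subscheme $Z \subseteq X$ is a finite chain of blowups
\[ X' = X_m \to X_{m-1} \to \cdots \to X_1 \to X_0 = X \]
in which each centre is a regular closed subscheme contained in the closed ``bad locus'' --- the locus where the current pair fails to satisfy ``the ambient scheme is regular and the total transform of $Z$ is a strict normal crossings divisor'' --- and such that at the final stage $X_m$ is regular and $Z \times_X X_m$ is a strict normal crossings divisor. I would then invoke the standard fact that a finite composition of blowups in closed subschemes is itself a blowup of the base in a closed subscheme, cf.\ \cite{stacks-project}, so that $f: X' = X_m \to X$ is a single blowup. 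Since every centre of the chain lies over the complement of the open locus along which $X$ is regular and $Z$ is already strict normal crossings, and a blowup in a centre disjoint from an open subset restricts to an isomorphism over that subset, the centre of $f$ is disjoint from that open locus and $f$ is an isomorphism over it --- when $Z = \varnothing$ this open locus is precisely the regular locus of $X$, matching the phrasing above. Finally, a strict normal crossings divisor is a normal crossings divisor in the sense of Definition~\ref{defi:NC}, since the Zariski-local normal form there is the required \'etale-local one for the trivial covering; this gives the stated conclusion for $Z \times_X X'$.

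The only real obstacle is that the input is a deep and entirely imported theorem: there is nothing to do but quote it accurately, and everything downstream is formal. For orientation I would also record how it is used in the body of the paper. Applying it with $X = \hX$ and $Z = \mX$ to a pair $\sX = (\hX, \mX) \in \ulPSm_k$, smoothness of $\iX$ forces the singular locus of $\hX$ to lie inside $\mX$, hence the bad locus of $(\hX, \mX)$ --- and so every centre occurring in the chain --- sits over $\mX$; the resolving blowup $\hX' \to \hX$ is therefore an isomorphism over $\iX$ and underlies an abstract admissible blowup $(\hX', \mX \times_{\hX} \hX') \to \sX$ whose source is a normal crossings modulus pair, which is precisely property (RoS) of Definition~\ref{defi:RosWF}.
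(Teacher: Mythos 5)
Your proposal matches the paper exactly in substance: the paper gives no proof of Theorem~\ref{theo:ROS} at all, treating it purely as an imported result of Temkin and Hironaka, which is precisely what you do. The additional bookkeeping you supply (composing the blowup chain into a single blowup via the standard Stacks-project lemma, locating its centre away from the already-resolved locus, and relating strict normal crossings to Definition~\ref{defi:NC} and to property (RoS)) is correct and harmless, though the paper leaves all of it implicit.
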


\begin{theo}[Weak Factorisation, {cf.\cite[Thm.1.2.1]{AT19}}] \label{theo:WF}
If $k$ is characteristic zero then $k$ satisfies (RoS).
\end{theo}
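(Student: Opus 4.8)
The plan is to deduce (RoS) directly from Theorem~\ref{theo:ROS}. Fix $\sX = (\hX,\mX) \in \ulPSm_k$ with $\car(k) = 0$; then $\hX$ is a Noetherian quasi-excellent $\QQ$-scheme. First note that $\hX$ is automatically reduced: working on an affine open $U = \Spec A$ with $\mX|_U = V(f)$ for a nonzerodivisor $f \in A$, the ring $A[f^{-1}] = \OO(\iX \cap U)$ is reduced because $\iX$ is smooth, so any nilpotent $a \in A$ satisfies $f^m a = 0$ for some $m$ and hence $a = 0$. Moreover $\mX$ contains no generic point of $\hX$ (a nonzerodivisor lies in no minimal prime), so $\iX = \hX \smallsetminus |\mX|$ is dense, hence schematically dense in the reduced scheme $\hX$.

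Next I reduce to the case that $\hX$ is integral. The normalisation $\nu\colon \hX^\nu \to \hX$ is finite since $\hX$ is excellent, it is an isomorphism over the normal locus of $\hX$ — which contains $\iX$, as $\iX$ is smooth — and, being a projective birational morphism that is an isomorphism over the schematically dense open $\iX$, it is the blowup of $\hX$ along a closed subscheme supported in $|\mX|$. Thus $(\hX^\nu, \nu^*\mX) \to \sX$ is an abstract admissible blowup whose underlying morphism is an actual blowup, and $\hX^\nu$ is a finite disjoint union of integral normal schemes of finite type over $k$; so it suffices to treat each component. (Alternatively, Temkin's theorem holds for reduced Noetherian quasi-excellent $\QQ$-schemes without the integrality hypothesis, in which case this step may be skipped.)

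So assume $\hX$ integral. Apply Theorem~\ref{theo:ROS} with $X = \hX$ and $Z = \mX$, taken with its possibly non-reduced Cartier structure: one obtains a blowup $f\colon \hY \to \hX$ with $\hY$ regular, whose centre $C$ is disjoint from the regular locus $\hX^{\mathrm{reg}}$, and such that $\mX \times_{\hX} \hY$ is a normal crossings divisor in the sense of Definition~\ref{defi:NC}. Since $\iX$ is smooth we have $\iX \subseteq \hX^{\mathrm{reg}}$, so $C \subseteq |\mX|$ and $f$ restricts to an isomorphism over $\iX$; also $f$ maps no generic point of the regular scheme $\hY$ into $|\mX|$, each such point dominating the generic point of $\hX$, so $\mY := f^*\mX = \mX \times_{\hX} \hY$ is a genuine effective Cartier divisor. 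Hence $f\colon (\hY, \mY) \to (\hX, \mX)$ is an abstract admissible blowup with $\hY \to \hX$ an actual blowup, and as $\hY$ is regular and $\mY$ is normal crossings, $(\hY, \mY)$ is a normal crossings modulus pair. Composing with the normalisation blowup of the previous step — composites of abstract admissible blowups are again abstract admissible blowups, and a composite of blowups that is an isomorphism over the schematically dense open $\iX$ is again an actual blowup of $\hX$ — gives the required $\sY \to \sX$.

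The only genuinely non-formal ingredient is Theorem~\ref{theo:ROS} itself (Hironaka classically, Temkin in the quasi-excellent case), which we use as a black box; everything else is bookkeeping. Two points deserve a word of care: (i) that the resolution centre lies over $\mX$, so that $\iX$ is untouched and the result is an \emph{abstract admissible} blowup — this is immediate from the smoothness of $\iX$; and (ii) that feeding $Z = \mX$ (rather than its support) into Theorem~\ref{theo:ROS} is legitimate and produces exactly the condition of Definition~\ref{defi:NC}, regularity of $\hY$ together with arbitrary multiplicities $r_a > 0$ for $\mX \times_{\hX} \hY$. The reduction to integral total spaces via normalisation is elementary, but is the one spot where one must recall that the normalisation of an excellent scheme is an honest blowup which is an isomorphism over any schematically dense normal open.
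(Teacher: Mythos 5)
You have proved the literal printed sentence, but not what this theorem is actually there to assert. The paper gives no proof of Theorem~\ref{theo:WF} at all: it is a black-box import from the literature, and its bracketed title (``Weak Factorisation''), its citation to \cite[Thm.1.2.1]{AT19}, and the way it is consumed in the proof of Theorem~\ref{theo:MORealisation} (``By (WF), such $\sY \to \sX$ are zig zags of abstract admissible blowups\dots'') make clear that the body is a misprint for ``$k$ satisfies (WF)'' --- the property (RoS) is what Theorem~\ref{theo:ROS} is quoted to supply. Weak factorisation in the sense of Definition~\ref{def-wf} is an independent and much deeper theorem (Abramovich--Karu--Matsuki--W{\l}odarczyk, Abramovich--Temkin); it cannot be extracted from Theorem~\ref{theo:ROS} by the bookkeeping you carry out, and your argument never produces, or even addresses, a factorisation of an admissible blowup between normal crossings modulus pairs into blowups and blowdowns with regular centres having normal crossings with the modulus. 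So, measured against the content the paper needs from this statement, the proposal misses the substance entirely; measured against the misprinted text, it is the natural derivation of (RoS) from embedded resolution, which is presumably exactly why the paper records Theorem~\ref{theo:ROS} separately.

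Even read purely as a proof of (RoS), the conversions to ``actual blowup'' are the delicate points and you assert rather than prove them. (RoS) quantifies over all $\sX \in \ulPSm_k$: $\hX$ is separated of finite type, reduced (as you correctly show) but possibly reducible and not quasi-projective. Your claims that the normalisation $\hX^\nu \to \hX$ is an honest blowup, and that its composite with the resolution morphism is again a single blowup of $\hX$, are precisely the conversions the paper only performs under quasi-projectivity and integrality hypotheses (Proposition~\ref{prop:aabAreBu}, via Temkin's result plus \cite[8.1.24]{Liu02}, which concerns projective birational morphisms onto quasi-projective integral schemes). Composing blowups whose centres avoid $\iX$ can be justified in general (composition of $U$-admissible blowups of qcqs schemes is again a $U$-admissible blowup), but you neither invoke nor prove such a lemma, and ``the normalisation of an excellent scheme is an honest blowup with centre in $|\mX|$'' is not a standard fact outside the quasi-projective integral setting; the alternative you mention only parenthetically (that Temkin's theorem applies to reduced, non-integral quasi-excellent schemes) is what would actually need to be checked. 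Finally, note that Theorem~\ref{theo:ROS} as printed (centre disjoint from the regular locus of $X$) cannot literally yield a nontrivial modification when $\hX$ is already regular but $\mX$ is not yet normal crossings --- the correct embedded statement has centres lying over the non-snc locus of the pair --- so if you use it as a black box you should flag that you need the pair version, with the centre contained in $|\mX|$ so that admissibility still holds.
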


Notice that (RoS) and (WF) deal with actual blowups. To can turn abstract admissible blowups into actual blowups we using the following.

\begin{prop}[Temkin] \label{prop:aabAreBu}
Suppose that $\sY \to \sX$ is an abstract admissible blowup in $\ulPSm_k$ with $\hX$ quasi-projective (e.g., affine) and integral. Then there exists a second abstract admissible blowup $\sY' \to \sY$ such that $\hY' \to \hX$ is an actual blowup.
\end{prop}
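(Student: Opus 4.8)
The plan is to deduce this from the flattening theorem of Raynaud--Gruson, together with the observation that a flat proper modification of an integral scheme is an isomorphism.

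Write $f:\hY\to\hX$ for the given abstract admissible blowup, so $f$ is proper, $\mY=f^*\mX$, and $f$ restricts to an isomorphism over the dense open $U:=\iX=\hX\setminus|\mX|$ of the integral scheme $\hX$. First I would apply the flattening theorem of Raynaud--Gruson (see \cite[081R]{stacks-project}) to the finite type morphism $f$ and the quasi-coherent module $\OO_{\hY}$, which is flat over $\hX$ at every point lying over $U$: this produces a $U$-admissible blowup $b:\hX'\to\hX$, say the blowup in a quasi-coherent ideal $\sI\subseteq\OO_{\hX}$ with $V(\sI)\cap U=\varnothing$, such that the strict transform of $\OO_{\hY}$ along $b$ is flat over $\hX'$. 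Since $\hX$ is integral and $V(\sI)\subseteq|\mX|\subsetneq\hX$, the ideal $\sI$ is the unit ideal at the generic point, so $\hX'$ is again integral and $b$ is an isomorphism over $U$. Since moreover $\hY$ is integral with generic point lying over $\eta_{\hX}\notin V(\sI)$, the strict transform of $\hY$ along $b$ agrees with the blowup $c:\hY':=\Bl_{\sI\OO_{\hY}}(\hY)\to\hY$ (cf.\ \cite[080E]{stacks-project}), whose centre $V(\sI\OO_{\hY})$ lies in $f^{-1}(|\mX|)=|\mY|$. Setting $\mY':=c^*\mY$ and $\sY':=(\hY',\mY')$, the morphism $c:\sY'\to\sY$ is then an abstract admissible blowup: the total space map is proper, the modulus pulls back correctly, and, the centre lying in $|\mY|$, the interior is unchanged, $\iY'\cong\iY$, hence still smooth.

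The remaining step is to show that the induced morphism $h:\hY'\to\hX'$ is an isomorphism; once this is done, $\hY'\to\hX$ factors as $b\circ h$ and is therefore isomorphic over $\hX$ to the blowup of $\hX$ in $\sI$, i.e.\ an actual blowup, which is the claim. Here I would argue as follows: $h$ is proper, being a closed subscheme of $\hY\times_{\hX}\hX'$, which is proper over $\hX'$; it is flat by the choice of $b$, since the structure sheaf of the strict transform $\hY'$ is precisely the strict transform of $\OO_{\hY}$; and it restricts to an isomorphism over $U$. In particular $h$ is dominant between the integral schemes $\hY'$ and $\hX'$ with $0$-dimensional generic fibre $\Spec k(\eta_{\hX'})$, so by flatness every fibre of $h$ has dimension $\dim\hY'-\dim\hX'=0$; thus $h$ is quasi-finite, hence finite (being also proper), hence finite locally free, of rank $1$ since it has rank $1$ over $\eta_{\hX'}$. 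Finally a rank-one finite locally free $\OO_{\hX'}$-algebra is trivial (locally $h_*\OO_{\hY'}=\OO_{\hX'}\cdot e$ with $e$ a module generator, and $1=ae$ forces $a$ to be a unit), so $h_*\OO_{\hY'}=\OO_{\hX'}$ and $h$ is an isomorphism.

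The only substantial input is the Raynaud--Gruson flattening theorem; everything afterwards is elementary. I expect the only delicate points to be bookkeeping ones: keeping track that the schemes involved stay integral of the expected dimension (so that ``flat $\Rightarrow$ equidimensional fibres'' and ``generic rank one'' may be invoked), and checking that passing to the strict transform preserves both the modulus-pair structure and the smoothness of the interior --- and both of these follow from the blowup centres lying inside $|\mX|$, resp.\ $|\mY|$.
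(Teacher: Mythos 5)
Your argument is correct, but it takes a genuinely different route from the paper's. The paper's proof is a two-line reduction: it invokes Temkin's theorem \cite[Cor.3.4.8]{Tem11} that every $\iX$-modification is dominated by an $\iX$-blowup (in Temkin's sense, i.e.\ one carrying a relatively ample sheaf with a section invertible on $\iX$), and then converts ``projective birational onto an integral quasi-projective scheme'' into ``actual blowup'' via \cite[8.1.24]{Liu02} --- that last step is precisely where the quasi-projectivity hypothesis is used. You instead run the classical Raynaud--Gruson argument directly (essentially the Stacks Project's proof that a $U$-modification is dominated by a $U$-admissible blowup): flatten $f$ by a $U$-admissible blowup $b:\hX'\to\hX$, identify the strict transform $\hY'$ with $\Bl_{\sI\OO_{\hY}}(\hY)$, and show the resulting flat proper modification $h:\hY'\to\hX'$ of integral finite-type $k$-schemes is an isomorphism, so that $\hY'\to\hX$ is literally the blowup in $\sI$. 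The intermediate verifications are all standard and hold as you state them: $\hY$ is integral because $\iY$ is a schematically dense integral open, the fibre-dimension count for a flat dominant morphism of integral finite-type $k$-schemes gives quasi-finiteness, proper plus quasi-finite gives finite, and a rank-one finite locally free algebra is trivial; likewise $c:\sY'\to\sY$ is an abstract admissible blowup because its centre lies in $|\mY|$, so $\iY'\cong\iY$ and $c^*\mY$ remains an effective Cartier divisor on the integral scheme $\hY'$. What your approach buys is that quasi-projectivity of $\hX$ is never actually needed --- only integrality (and quasi-compact separatedness, automatic here) --- so you prove a marginally stronger statement and stay within the Stacks Project's flattening machinery; what the paper's approach buys is brevity, by outsourcing the construction entirely to Temkin and Liu.
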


\begin{proof}
This is essential \cite[Cor.3.4.8]{Tem11} which says that any $\iX$-modification $\hY \to \hX$ is dominated by an $\iX$-blow up $\hY' \to \hX$. Here $\iX$-modification is a factorisation $\iX \to \hY \to \hX$ into a schematically dominant morphism $\iX \to \hY$ and a proper morphism $\hY \stackrel{f}{\to} \hX$, and an $\iX$-blow up is an $\iX$-modification $\hY' \to \hX$ such that there exists an $f$-ample $\OO_{\hY'}$-module $\sL$ equipped with a global section which is invertible on $\iX$. The existence of the ample sheaf $\sL$ implies that $f$ is projective, \cite[0B45]{stacks-project}, and therefore it is an actual blow up, \cite[8.1.24]{Liu02}.
\end{proof}

\section{Comparison of $\protect \tau$ and $\protect \ulM\tau$-cohomologies.} \label{sec:MtauCoh}

Our goal in this subsection is Prop.\ref{prop:MZarIscolimZar} which says that the $\ulM\tau$-cohomology is the colimit over abstract admissible blowups of that $\tau$-cohomology for $\tau = \Zar, \et$. 

The same proof works for $\fppf$, but we have been diligently avoiding $\ulMSch_k$ in this article.

\begin{reco}[Small sites] \label{reco3}\ 
\begin{enumerate}
 \item The small Zariski 
 site $\sX_{\Zar}$
 on a modulus pair is the full subcategory of $(\ulPSm_k)_{/\sX}$ whose objects are minimal morphisms $(\hU, \mU) \to (\hX, \mX)$ such that $\hU \to \hX$ is an open immersion 
i.e., \emph{minimal open immersions}. 
It is canonically equivalent to the small Zariski 
site $\hX_{\Zar}$ 
of the total space $\hX$.

 \item The small $\ulMZar$ 
 site $\sX_{\ulMZar}$ 
 on a modulus pair is the essential image of $\sX_{\Zar}$. 
 It follows from \cite[Lem.1.32, Thm.2.13]{KM21} that all objects of $\sX_{\ulMZar}$ 
 are of the form $$\sV \stackrel{t^{-1}}{\to} \sV' \stackrel{f}{\to} \sX' \stackrel{s}{\to} \sX$$ where $s, t$ are (the images of) abstract admissible blowups in $\ulPSm$ and $f$ is the image of a minimal open immersion. 
 Similarly, every morphism in 
$\sX_{\ulMZar}$ 
is also of this form. 
 \item The small sites $\sX_{\Nis}$, $\sX_{\et}$, $\sX_{\ulMNis}$, $\sX_{\ulMet}$ are defined analogously.
\end{enumerate}
\end{reco}

\begin{lemma}\label{lem:finlimmod}
Let $\tau \in \{\Zar,\Nis,\et\}$. Then, for any modulus pair $\sX$ over $k$, the functor 
\[
\alpha : \hX_{\tau} \to \sX_{\ulM\tau}; \quad \hU \mapsto (\hU, \mX \times_{\hX} \hU)
\]
preserves finite limits. 
\end{lemma}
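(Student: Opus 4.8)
The plan is to reduce the statement to the two generating kinds of finite limit, the terminal object and pullbacks: a functor between two categories that both have all finite limits preserves all finite limits as soon as it preserves the terminal object and pullbacks. Both sides here do have all finite limits. The small $\tau$-site $\hX_\tau$ has $\hX$ as terminal object and admits fibre products, since $\tau$-morphisms ($\tau \in \{\Zar,\Nis,\et\}$) are stable under composition and base change; and $\sX_{\ulM\tau}$ is a full subcategory of $(\ulMSm_k)_{/\sX}$ by Recollection~\ref{reco3}, while $\ulMSm_k$ has all fibre products by Recollection~\ref{reco:1}.

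For the terminal object, $\alpha$ sends $\hX$ to $\sX = (\hX, \mX)$, which is terminal in $\sX_{\ulM\tau}$: every object carries a structure morphism to $\sX$, and uniqueness is inherited from the analogous uniqueness on total spaces (for the blowups $\hY \to \hX$ occurring in the presentation of an object of $\sX_{\ulM\tau}$, cf.\ Recollection~\ref{reco3}). For pullbacks, I would take $\hU \xrightarrow{a} \hW \xleftarrow{b} \hV$ in $\hX_\tau$. The key point is that the morphism $\alpha(a)\colon (\hU, \mX \times_{\hX} \hU) \to (\hW, \mX \times_{\hX} \hW)$ is \emph{minimal} (because $\mX \times_{\hX} \hU = (\mX \times_{\hX} \hW) \times_{\hW} \hU$) and its underlying scheme map $a$ is flat, $\tau$-morphisms being flat. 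So the flat case of the fibre-product description in Recollection~\ref{reco:1}, item~(\ref{enum:PSmFib}), computes $\alpha(\hU) \times_{\alpha(\hW)} \alpha(\hV)$ inside $\ulPSm_k$: its total space is $\hU \times_{\hW} \hV$ and its modulus is the restriction of $\mX \times_{\hX} \hV$, i.e.\ $\mX \times_{\hX} (\hU \times_{\hW} \hV)$. That is precisely $\alpha(\hU \times_{\hW} \hV)$, and it remains the fibre product after localising, since $\ulPSm_k \to \ulMSm_k$ preserves fibre products along minimal morphisms (Recollection~\ref{reco:1}). As this object lies in the full subcategory $\sX_{\ulM\tau} \subseteq (\ulMSm_k)_{/\sX}$, it is also the pullback of $\alpha(\hU) \to \alpha(\hW) \leftarrow \alpha(\hV)$ inside $\sX_{\ulM\tau}$, and $\alpha$ visibly carries the structural projections to the structural projections. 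Hence $\alpha$ preserves the terminal object and pullbacks, and therefore all finite limits.

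The only genuine obstacle, and a mild one, is keeping track of the localisation: one must check that a pullback computed in the small site $\sX_{\ulM\tau}$ agrees with the one computed in $\ulMSm_k$. This comes down to $\sX_{\ulM\tau}$ being a full subcategory of $(\ulMSm_k)_{/\sX}$ that contains $\alpha(\hU \times_{\hW} \hV)$ — which it does, because the fibre product already lands in $\sX_\tau \subseteq \sX_{\ulM\tau}$, so no abstract admissible blowup is needed to express it. The remaining points — that $\hX_\tau$ has all finite limits, that $\alpha$ is well defined on objects and morphisms, and the verification of the minimality and flatness hypotheses of Recollection~\ref{reco:1}, item~(\ref{enum:PSmFib}) — are routine.
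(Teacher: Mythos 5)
Your proposal is correct and follows essentially the same route as the paper: reduce to preservation of the terminal object and of fibre products, then use the fact that for ambient minimal morphisms the fibre product in $\ulMSm_k$ is computed on total spaces with pulled-back modulus (\cite[Prop.1.33]{KM21}, i.e.\ Recollection~\ref{reco:1}\eqref{enum:PSmFib}). Your extra care about minimality, flatness, and the compatibility of pullbacks in the small site $\sX_{\ulM\tau}$ versus $\ulMSm_k$ only makes explicit what the paper's terse proof leaves implicit.
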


\begin{proof}
Since $\alpha$ send the terminal object to the terminal object, it suffices to show that it preserves fiber products.
This follows from the fact that for any diagram of ambient minimal morphisms $\sB \to \sA \leftarrow \sC$, the modulus pair 
\[
(\hB \times_{\hA} \hC , \text{the pullback of $\mA$})
\]
represents the fiber product $\sB \times_{\sA} \sC$ in $\ulMSm_k$, \cite[Prop.1.33]{KM21}. (Note that this does not hold for non-minimal morphisms).
\end{proof}

\begin{cor}\label{cor:coskmod}
The functor $\alpha$ in Lemma~\ref{lem:finlimmod} commutes with the skeleton functors and the coskeleton functors.\footnote{The skeleton functor is the forgetful functor $\sk: \PSh(\Delta, C) \to \PSh(\Delta_{\leq n}, C)$ and the coskeleton functor is its right adjoint.} More precisely, for any simplicial object $K$ in $\hX_\tau$ and for any $n \geq 0$, we have an isomorphism $\sk_n(\alpha(K)) \cong \alpha(\sk_n K)$ and $\cosk_n(\alpha(K)) \cong \alpha(\cosk_n K)$.
\end{cor}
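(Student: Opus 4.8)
The plan is to deduce the statement purely formally from Lemma~\ref{lem:finlimmod}, via the pointwise formula for Kan extensions. Recall that, by definition, $\sk_n$ is restriction of simplicial objects along $\Delta_{\leq n} \hookrightarrow \Delta$, and $\cosk_n$ is its right adjoint, i.e.\ the right Kan extension along that inclusion. The skeleton case is immediate: $\sk_n$ is precomposition with $\Delta_{\leq n} \hookrightarrow \Delta$, so it commutes strictly with applying $\alpha$ levelwise, giving $\sk_n(\alpha(K)) = \alpha(\sk_n K)$ on the nose, with no hypotheses. All the content lies in $\cosk_n$.

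For the coskeleton I would write out the pointwise formula: for a simplicial object $K$ of $\hX_\tau$ and each $m \geq 0$,
\[
(\cosk_n K)_m \;\cong\; \lim_{\substack{[j] \to [m] \\ j \leq n}} K_j ,
\]
the limit ranging over the comma category of arrows $[j] \to [m]$ in $\Delta$ with $j \leq n$ (and the evident morphisms). The crucial point is that this indexing category is \emph{finite} — there are only $n+1$ possibilities for $[j]$, and $\Delta$ has finite Hom-sets — so $(\cosk_n K)_m$ is a \emph{finite} limit of objects of $\hX_\tau$, which exists since $\hX_\tau$ has a terminal object (namely $\hX$) and fibre products. Applying $\alpha$ and invoking Lemma~\ref{lem:finlimmod} (preservation of finite limits) yields
\[
\alpha\bigl((\cosk_n K)_m\bigr) \;\cong\; \lim_{\substack{[j] \to [m] \\ j \leq n}} \alpha(K_j) ,
\]
which is precisely the pointwise formula computing $(\cosk_n(\alpha K))_m$. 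As these isomorphisms are natural in $[m]$, hence compatible with the simplicial operators (both sides being the value at $[m]$ of one and the same right Kan extension), we conclude that $\cosk_n(\alpha K)$ exists in $\sX_{\ulM\tau}$ and $\cosk_n(\alpha K) \cong \alpha(\cosk_n K)$. Existence on the $\ulM\tau$-side is automatic from this; alternatively, one may note that $\sX_{\ulM\tau}$ itself has finite limits computed through $\alpha$ from $\hX_\tau$, using fibre products of minimal morphisms as in the proof of Lemma~\ref{lem:finlimmod} and \cite[Prop.1.33]{KM21}.

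I do not expect a genuine obstacle here, as the corollary is a soft consequence of Lemma~\ref{lem:finlimmod}. The single point requiring care is the bookkeeping in the middle paragraph: one must make explicit that, with $n$ fixed, $\cosk_n$ is genuinely computed by a \emph{finite} limit rather than an arbitrary small one — this is exactly where the finiteness of $\Delta_{\leq n}$ and of the Hom-sets of $\Delta$ enters. Once that is pinned down, Lemma~\ref{lem:finlimmod} does all the work and no further input about the categories $\hX_\tau$ and $\sX_{\ulM\tau}$ is needed.
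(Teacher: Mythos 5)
Your proposal is correct and follows essentially the same route as the paper: the skeleton case is handled by noting $\sk_n$ is just restriction along $\Delta_{\leq n}\subset\Delta$, and the coskeleton case reduces to the observation that $\cosk_n$ is computed by finite limits, so Lemma~\ref{lem:finlimmod} applies. The paper simply cites \cite[0183]{stacks-project} for the finite-limit construction of $\cosk_n$ where you spell out the pointwise Kan extension formula; the content is the same.
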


\begin{proof}
Since the skeleton functor is induced by the restriction by the inclusion $\Delta_{\leq n} \subset \Delta$, we obviously have $\sk_n(
\alpha(K)) = 
\alpha(\sk_n K)$. 
To show the other assertion, by Lemma~\ref{lem:finlimmod} it suffices to note that $\cosk_n$ is constructed by finite limits (see \cite[0183]{stacks-project}). 
\end{proof}

\begin{lemma}\label{lem:Mtauhyp-criterion}
Let $\tau \in \{\Zar,\et\}$.
Let $\sX$ be a modulus pair over $k$, and let $\hU_\bullet$ be an $n$-truncated simplicial object in $\hX_{\tau}$ for some $n \geq 0$.
Define an $n$-truncated simplicial object $\sU_\bullet$ in $\sX_{\ulM\tau}$ by $\sU_i := (\hU_i,\mX \times_{\hX} \hU_i)$.
Then $\hU_\bullet$ is an $n$-truncated $\tau$-hypercovering if and only if $\sU_\bullet$ is an $n$-truncated $\ulM\tau$-hypercovering.
\end{lemma}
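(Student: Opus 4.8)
The plan is to unwind the definition of $n$-truncated hypercovering on both sides and, using the formal properties of $\alpha$ already recorded, reduce the equivalence to a single statement comparing coverings, which I would then verify directly from the definition of the modulus topology. Recall that, for a site $\mathcal{S}$ with finite limits and terminal object $\ast$, an $n$-truncated simplicial object $K_\bullet$ is an $n$-truncated hypercovering exactly when $K_0 \to \ast$ is a covering and, for each $0 \le m < n$, the canonical morphism $K_{m+1} \to (\cosk_m \sk_m K_\bullet)_{m+1}$ is a covering, where $(\cosk_m \sk_m K_\bullet)_{m+1}$ is the relevant finite limit. I would apply this with $\mathcal{S} = \hX_\tau$ (which has finite limits for $\tau \in \{\Zar,\et\}$, with terminal object $\hX$) and with $\mathcal{S} = \sX_{\ulM\tau}$, noting that the finite limits $(\cosk_m \sk_m \sU_\bullet)_{m+1}$ needed here exist in $\sX_{\ulM\tau}$ and are computed by $\alpha$ from the corresponding ones in $\hX_\tau$, by Lemma~\ref{lem:finlimmod} and Corollary~\ref{cor:coskmod}.

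First I would assemble the formal inputs: $\alpha$ sends $\hX$ to $\sX$ and preserves finite limits (Lemma~\ref{lem:finlimmod}), and by Corollary~\ref{cor:coskmod} it commutes with $\sk_m$ and $\cosk_m$. Therefore $\alpha$ carries the canonical morphism $\hU_0 \to \hX$ to $\sU_0 \to \sX$ and, for each $0 \le m < n$, the canonical morphism $\hU_{m+1} \to (\cosk_m \sk_m \hU_\bullet)_{m+1}$ to $\sU_{m+1} \to (\cosk_m \sk_m \sU_\bullet)_{m+1}$. Consequently the lemma reduces to the claim that, for every morphism $g$ of $\hX_\tau$,
\[
(\star)\qquad g \text{ is a } \tau\text{-covering}\iff \alpha(g) \text{ is an } \ulM\tau\text{-covering}.
\]

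For ``$\Rightarrow$'' in $(\star)$: if $g\colon \hW \to \hW'$ is a $\tau$-covering, then $\alpha(g)\colon (\hW, \mX\times_{\hX}\hW) \to (\hW', \mX\times_{\hX}\hW')$ is a minimal morphism (both moduli are restrictions of $\mX$) whose underlying morphism of total spaces is a classical $\tau$-covering; by the construction of the modulus topology, Recollection~\ref{reco2}, such one-element families generate $\ulM\tau$-covering sieves. For ``$\Leftarrow$'', I would show that $\alpha(g)$ being an $\ulM\tau$-covering of $\sW' := \alpha(\hW')$ forces $g$ to be surjective. By the refinement statement \cite[Cor.4.21]{KM21} (applied over $\sW'$), the sieve generated by $\alpha(g)$ contains a family $\{\sU_i \to \sY \xrightarrow{b} \sW'\}_i$ with $b$ an abstract admissible blowup and $\{\sU_i \to \sY\}_i$ a $\tau$-covering, so each $\sU_i \to \sY$ is minimal and $\coprod_i \hU_i \to \hY$ is a classical $\tau$-covering. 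Since $\mX\times_{\hX}\hW'$ is an effective Cartier divisor it contains no generic point of $\hW'$, hence the interior of $\sW'$ is dense; as $b$ is proper and an isomorphism over that interior it is surjective on total spaces, and the same holds for the abstract admissible blowups that appear when, via the right calculus of fractions for $\Sigma$, one represents the factorisation of each $\sU_i \to \sW'$ through $\alpha(g)\colon \sW \to \sW'$ by a zig-zag of ambient morphisms and abstract admissible blowups. Chasing a point $w' \in \hW'$ through $b$, through a suitable $\hU_i$, and through that zig-zag then yields a point of $\hW$ above $w'$, so $g$ is surjective, i.e.\ a $\tau$-covering.

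I expect the reverse implication of $(\star)$ to be the main obstacle: passing from the a priori zig-zaggy notion of an $\ulM\tau$-covering in the localisation $\ulMSm_k$ to honest surjectivity of a morphism of schemes requires careful bookkeeping with the right calculus of fractions together with the explicit description of the objects and morphisms of the small $\ulM\tau$-site recalled in Recollection~\ref{reco3}. Everything else is formal, powered by Lemma~\ref{lem:finlimmod} and Corollary~\ref{cor:coskmod}.
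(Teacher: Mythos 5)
Your proposal is correct and follows essentially the same route as the paper: reduce via Lemma~\ref{lem:finlimmod} and Corollary~\ref{cor:coskmod} to the single claim that a morphism in $\hX_\tau$ is a $\tau$-covering iff its image under $\alpha$ is an $\ulM\tau$-covering, then prove the nontrivial direction by refining via \cite[Cor.4.21]{KM21} into a $\tau$-covering followed by an abstract admissible blowup and chasing surjectivity on total spaces through the calculus-of-fractions zig-zag. The paper phrases the base case only for $\sU_0 \to \sX$ and invokes it for the coskeleton comparison maps, which is exactly your statement $(\star)$ in disguise.
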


\begin{proof}
First we treat the case $n=0$. 
If $\hU_0 \to \hX$ is a $\tau$-covering, then $\sU_0 \to \sX$ is an $\ulM\tau$-covering by definition of $\ulM\tau$. 
Conversely, suppose that $\sU_0 \to \sX$ is an $\ulM\tau$-covering. 
Since $\hU_0 \to \hX$ is \'etale (resp. locally an open immersion) as an object of $\hX_\et$ (resp. $\hX_\Zar$), it suffices to show that it is surjective. 
By \cite[Cor.~4.21]{KM21}, the associated morphism $\sU_0 \to \sX$ in $\ulMSm_k$ is refined by a composition of minimal ambient morphisms
\[
\sV \xrightarrow{f} \sX' \xrightarrow{s} \sX,
\]
where $s$ is an abstract admissible blow-up, and $f$ is a $\tau$-covering. So we have the solid commutative square
\[ \xymatrix@!=3pt{
& \ar@{-->}[dr]^g \ar@{-->}[dl]_t & \\
\sV \ar[d]_f \ar[rr]^\phi && \sU_0 \ar[d] \\
\sX' \ar[rr]_s && \sX
} \]
in $\ulMSm_k$. As we observed in Recollection~\ref{reco:1} the morphism $\phi$ can be written as a composition $\phi = g \circ t^{-1}$ for some abstract admissible blowup $t$ and some minimal morphism $t$, giving the dashed morphisms making a commutative triangle. Since $t$, $f$ and $s$ are surjective on the total spaces, so is $\sU_0 \to \sX$.

Next we treat the case $n > 0$. For any $m < n$, consider the canonical morphisms 
\begin{equation}\label{eq:c}
c : \hU_{m+1} \to (\cosk_m \sk_m \hU_\bullet)_{m+1}
\end{equation}
and 
\[
d: \sU_{m+1} \to (\cosk_m \sk_m \sU_\bullet)_{m+1},
\]
where $c$ is a morphism of schemes and $d$ is a morphism in $\ulMSm_k$.
Since we know that $\hU_0 \to \hX$ is a $\tau$-covering if and only if $\sU_0 \to \sX$ is an $\ulM\tau$-covering by the base case $n=0$, it remains to show that $c$ is a $\tau$-covering if and only if $d$ is an $\ulM\tau$-covering.
But by Cor.\ref{cor:coskmod}, we may assume that the underlying scheme of $\cosk_m \sk_m \sU_\bullet$ is given by $\cosk_m \sk_m \hU_\bullet$, and hence that $d$ is represented by $c$. 
Then the desired assertion follows from the base case $n=0$.
\end{proof}

\begin{lemma}\label{lem:findiagmodel}
Let $\tau \in \{\Zar,\et\}$, and $\sX$ a modulus pair over $k$. Then, for any finite diagram $\sU_\bullet :I \to \sX_{\ulM\tau}$, there exist an abstract admissible blow-up $\sX' \to \sX$ and a finite diagram $\hV_\bullet :I \to \hX'_{\tau}$ such that $\sV_\bullet \to \sX' \to \sX$ is isomorphic to $\sU_\bullet \to \sX$, where $\sV_i := (\hV_i,\mX \times_{\hX} \hV_i)$ for each $i \in I$.
\end{lemma}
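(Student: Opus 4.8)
The plan is to produce a single abstract admissible blowup $\sX'\to\sX$ over which the whole diagram becomes an honest diagram of schemes; conceptually this is the statement that $\sX_{\ulM\tau}$ is the filtered colimit of categories $\colim_{(\sX'\to\sX)}\hX'_\tau$ over abstract admissible blowups of $\sX$, so that a \emph{finite} diagram factors through a single stage. Two elementary remarks are used throughout. \textbf{(a)} For any modulus pair $\sY$ the interior $\iY$ is scheme-theoretically dense in $\hY$ (the complement of an effective Cartier divisor contains no associated point); hence every abstract admissible blowup $\hZ\to\hY$ is surjective, and any proper $\tau$-morphism restricting to an isomorphism over $\iY$ is an isomorphism. \textbf{(b)} By the right calculus of fractions for $\Sigma$, Rec.~\ref{reco:1}, the poset of abstract admissible blowups over $\sX$ is cofiltered: any finite family of them admits a common refinement $\sX'\to\sX$ factoring through each of them by an abstract admissible blowup.

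\emph{Objects.} By Rec.~\ref{reco3}(2) each of the finitely many $\sU_i$ is, as an object of $\ulMSm_k$ over $\sX$, isomorphic to a minimal open (for $\tau=\Zar$; a minimal \'etale scheme for $\tau=\et$) of the total space of some abstract admissible blowup $\sX_i\to\sX$. Taking a common refinement $\sX'\to\sX$ of the $\sX_i$ by remark \textbf{(b)} and base changing, each $\sU_i\times_{\sX}\sX'$ is honest over $\hX'$, and the projection $\sU_i\times_{\sX}\sX'\to\sU_i$ is an isomorphism (it is a base change of an abstract admissible blowup, hence itself one).

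\emph{Morphisms.} Write a morphism $\sU_\phi\colon\sU_i\to\sU_j$ of the diagram, via the calculus of fractions, as $f\circ t^{-1}$ with $t\colon\sW_\phi\to\sU_i$ an abstract admissible blowup and $f\colon\sW_\phi\to\sU_j$ ambient. Up to replacing $t$ by a dominating genuine blowup (Prop.~\ref{prop:aabAreBu}), the total space of $\sW_\phi$ is a blowup $\Bl_{Z}\hV_i$ of the open $\hV_i\subseteq\hX'$ realizing $\sU_i$; since $t$ is an isomorphism away from the modulus, the centre $Z$ lies in $|\mX|\cap\hV_i$, so its schematic closure $\ol Z$ in $\hX'$ still lies in $|\mX|$ and $\Bl_{\ol Z}\hX'\to\hX'$ is again an abstract admissible blowup whose restriction to $\hV_i$ is $t$. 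A common refinement $\sX''\to\sX'$ of the finitely many blowups $\Bl_{\ol Z}\hX'$ arising this way (remark \textbf{(b)}) is then an abstract admissible blowup over which every $\sW_\phi$ is honest; in fact $\sW_\phi\times_{\sX}\sX''$ then coincides with $\sU_i\times_{\sX}\sX''$ (as $\Bl_{\ol Z}\hX'$ was built to extend $t$, the strict transform of $\sW_\phi$ along $\hX''\to\hX'$ is $\sU_i\times_{\sX}\sX''$ itself), so $t\times_{\sX}\sX''$ is an identity and $\sU_\phi\times_{\sX}\sX''$ is the honest $\tau$-morphism $f\times_{\sX}\sX''$. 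By functoriality of $-\times_{\sX}\sX''$ these assemble into a diagram $\hV_\bullet\colon I\to\hX''_\tau$, and the natural isomorphisms $\sU_i\times_{\sX}\sX''\cong\sU_i$ over $\sX$ (abstract admissible blowups are invertible in $\ulMSm_k$) identify $\sV_\bullet\to\sX''\to\sX$ with $\sU_\bullet\to\sX$; one takes $\sX'=\sX''$.

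The step I expect to be the main obstacle is the extension of the ``inner'' abstract admissible blowups $\sW_\phi\to\sU_i$ of opens to abstract admissible blowups of the ambient total space: for genuine blowups the schematic-closure-of-the-centre construction above is clean precisely because admissibility of $t$ forces the centre into $|\mX|$, but treating arbitrary abstract admissible blowups uniformly --- and obtaining the statement for \emph{all} modulus pairs $\sX$ without a standing quasi-projectivity hypothesis --- is delicate, and is where one leans on Temkin's results, Prop.~\ref{prop:aabAreBu} and its inputs. Everything else --- the compatibility bookkeeping, and the verification via remark \textbf{(a)} that base change collapses the zig-zags of Rec.~\ref{reco3}(2) --- is routine.
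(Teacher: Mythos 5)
Your overall architecture is the right one and matches the paper's: first use Recollection~\ref{reco3} to make all the objects $\sU_i$ honest objects of $\hX'_\tau$ over a single common refinement $\sX'\to\sX$ (your remark (b) plus base change is exactly how the paper does this), and then observe that the only remaining problem is to kill, by a further abstract admissible blowup of $\sX$, the finitely many wrong-way arrows $t_\phi\colon\sW_\phi\to\sU_i$ appearing in the fraction decompositions of the morphisms of the diagram. Your final assembly step (the base-changed $t_\phi$ become isomorphisms, so the subdivided diagram collapses back to a diagram indexed by $I$ landing in the small site, which is a full subcategory of $\ulPSm_{k/\sX}$) is also essentially the paper's argument, phrased via its barycentric subdivision of $I$.

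The gap is in how you produce the blowup of $\sX'$ that trivializes the $t_\phi$. You propose to realize each $t_\phi$ as an actual blowup $\Bl_Z\hV_i\to\hV_i$ via Proposition~\ref{prop:aabAreBu} and then extend it to $\hX'$ by blowing up the schematic closure $\ol Z$. This fails on two counts. First, Proposition~\ref{prop:aabAreBu} requires the base to be quasi-projective and \emph{integral}; the objects $\hV_i$ and $\hW_\phi$ of the small site carry no such hypotheses, and the lemma is asserted for arbitrary modulus pairs. Second, and more seriously, for $\tau=\et$ the structure map $\hV_i\to\hX'$ is étale but not an open immersion, so there is no closed subscheme $\ol Z\subseteq\hX'$ whose preimage in $\hV_i$ recovers $Z$: the closure of the image of $Z$ pulls back to $Z$ together with extra material coming from the other sheets of the étale map, and $\Bl_{\ol Z}\hX'\times_{\hX'}\hV_i$ is then not $\Bl_Z\hV_i$. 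This extension/descent-of-blowups problem is precisely what you flag as "the main obstacle", and it is not routine: the paper avoids it entirely by forming the single morphism $\sW=\sqcup_\phi\sW_\phi\to\sU=\sqcup_\phi\sU_{\mathrm{target}(\phi)}$ in $\ulPSm_k$ and invoking (the proof of) the flattening-type theorem \cite[Thm.2.13]{KM21}, which produces one abstract admissible blowup $\sX'\to\sX$ whose base change makes $\sW\to\sU$ an isomorphism, with no quasi-projectivity, integrality, or Zariski-versus-étale caveats. To repair your argument you should replace the schematic-closure construction by an appeal to that result (or to Raynaud--Gruson flattening in the form used there).
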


\begin{proof}
We discuss the \'etale case, but the same argument works for the Zariski case. As we observed in Recollection~\ref{reco3}, every object of $\sX_{\ulMet}$ is of the form $\sV \stackrel{t^{-1}}{\to} \sV' \stackrel{f}{\to} \sX' \stackrel{s}{\to} \sX$ where $s, t$ are the images in $\ulMSm_k$ of abstract admissible blowups, and $\sV' \in \sX'_{\et}$. So up to replacing $\sU_\bullet$ with an isomorphic diagram, and replacing $\sX$ with a sufficiently large abstact admissible blowup, we can assume that all $\sU_i$ are in the strict image of $\sX_{\et} \to \sX_{\ulMet}$ (not just in the essential image).

Again applying Recollection~\ref{reco3}, for every $\phi: i \to j$ in $I$, we can write $\sU_\phi: \sU_i \to \sU_j$ as 
\[ \sU_i \stackrel{t_\phi^{-1}}{\to} \sW_\phi \stackrel{s_\phi f_\phi}{\to} \sU_j \]
with $f_\phi$, $s_\phi$, $t_\phi$ as above. This gives a new diagram indexed by the barycentric subdivision $sd(I)$ of the directed graph $I$. Here, $sd(I)$ is the directed graph which has a span $i \stackrel{\sigma_\phi}{\leftarrow} \phi \stackrel{\psi_\phi}{\to} j$ for every edge $i \stackrel{\phi}{\to} j$ of $I$.\footnote{ More explicitly, $sd(I)$ has set of vertices the disjoint union $V_{sd(I)} = V_I \sqcup E_I$ of the vertices and edges of $I$, and set of edges $E_{sd(I)} = E_I \sqcup E_I$ two copies of $E_I$. The source morphism $E_{sd(I)} \to V_{sd(I)}$ is the identity on both copies of $E_I$. The target is the sum of the source and target morphisms $E_I \rightrightarrows V_I$ of $I$.} %
By construction, this new diagram factors as $sd(I) \stackrel{\sV}{\to} \ulPSm \to \ulMSm$. Now consider the disjoint unions $\sW = \sqcup_{Ar(I)} \sW_\phi$ and $\sU = \sqcup_{Ar(I)} \sU_{target(\phi)}$ with the canonical morphisms $\sW \stackrel{t}{\to} \sU \to \sX$ in $\ulPSm$.
By (the proof of) \cite[Thm.2.13]{KM21} there exists an abstract admissible blowup $\sX' \to \sX$ in $\ulPSm$ such that when we form the pullbacks
\[ \xymatrix{
\sX' \times_{\sX} \sW \ar[d]_-{(\ast)} \ar[r] & \sW \ar[d]^t \\
\sX' \times_{\sX} \sU \ar[d] \ar[r]^-{(\ast\ast)} & \sU \ar[d] \\
\sX' \ar[r] & \sX
} \]
in $\ulPSm$, the morphism $(\ast)$ becomes an isomorphism in $\ulPSm$. It follows that $\sX' \times_\sX \sV_\bullet: sd(I) \to \ulPSm$ is actually indexed by $I$, since all ``backwards'' edges $\sigma_\phi$ of $sd(I)$ are sent to isomorphisms. The horizontal morphisms $(\ast\ast)$ are abstract admissible blowups, so they assemble to give a natural isomorphism from $\sX' \times_\sX \sV_\bullet$ to $\sU_\bullet$ in $\ulMSm$. So now we have a diagram in $\ulPSm_{/\sX}$ whose objects are all in $\sX_{\et}$. Since the inclusion $\sX_{\et} \to \ulPSm_{/\sX}$ is fully faithful, our new diagram factors through $\sX_{\et}$. 
\end{proof}

\begin{cor}\label{cor:truncatedhyp-model}
Let $\tau \in \{\Zar,\et\}$.
Let $\sX$ be a modulus pair over $k$, and let $\sU_\bullet$ be an $n$-truncated $\ulM\tau$-hypercovering of $\sX$ in $\ulMSm_k$ for some $n \geq 0$.
Then there exists an abstract admissible blow-up $\sX' \to \sX$ and an $n$-truncated $\tau$-hypercovering $\hU'_\bullet \to \hX'$ such that the induced morphism of simplicial objects
$\sU'_\bullet \to \sX' \to \sX$ is isomorphic to $\sU_\bullet \to \sX$, where $\sU'_m := (\hU'_m,\mX \times_{\hX} \hU'_m)$ for each $m \leq n$.
\end{cor}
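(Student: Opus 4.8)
The plan is to obtain the corollary by feeding the finite indexing category $\Delta_{\leq n}^{\op}$ into Lemma~\ref{lem:findiagmodel} and then invoking Lemma~\ref{lem:Mtauhyp-criterion}; the statement is essentially the conjunction of these two lemmas, so no new idea is needed. An $n$-truncated simplicial object in a category $C$ is precisely a functor $\Delta_{\leq n}^{\op} \to C$, and $\Delta_{\leq n}^{\op}$ is a finite category, so $\sU_\bullet$ qualifies as a finite diagram $\sU_\bullet \colon \Delta_{\leq n}^{\op} \to \sX_{\ulM\tau}$ in the sense of Lemma~\ref{lem:findiagmodel}. Applying that lemma produces an abstract admissible blowup $\sX' \to \sX$, a finite diagram $\hU'_\bullet \colon \Delta_{\leq n}^{\op} \to \hX'_{\tau}$ --- equivalently an $n$-truncated simplicial object in $\hX'_{\tau}$ --- and an isomorphism of diagrams over $\sX$ between $\sU_\bullet \to \sX$ and $\sU'_\bullet \to \sX' \to \sX$, where $\sU'_m := (\hU'_m, \mX \times_{\hX} \hU'_m)$ for $m \leq n$.

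Next I would match up the moduli and transport the hypercovering property. Because $\sX' \to \sX$ is an abstract admissible blowup, the modulus of $\sX'$ is the pullback of $\mX$ to $\hX'$, and hence $\mX \times_{\hX} \hU'_m$ is equally the restriction of that modulus to $\hU'_m$; thus $\sU'_\bullet$ is exactly the $n$-truncated simplicial object built from $\hU'_\bullet$ by the construction of Lemma~\ref{lem:Mtauhyp-criterion} read over $\sX'$. Moreover abstract admissible blowups are inverted in $\ulMSm_k$, so $\sX' \cong \sX$ there, and since the (co)skeleton functors entering the definition of hypercoverings are built from finite limits this identification does not affect them; therefore an $n$-truncated $\ulM\tau$-hypercovering of $\sX$ is the same datum as one of $\sX'$. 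In particular $\sU'_\bullet$, being isomorphic as a diagram to the $\ulM\tau$-hypercovering $\sU_\bullet$, is an $n$-truncated $\ulM\tau$-hypercovering of $\sX'$, and Lemma~\ref{lem:Mtauhyp-criterion} applied over $\sX'$ yields that $\hU'_\bullet \to \hX'$ is an $n$-truncated $\tau$-hypercovering, as required.

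I do not expect any genuine obstacle here: the only points to watch are the bookkeeping of which modulus is pulled back along which morphism, and recording that, after inverting abstract admissible blowups, the hypercovering condition does not see the replacement of $\sX$ by $\sX'$. All of the real content --- producing a scheme-level diagram after an admissible blowup, and the equivalence between the $\tau$- and $\ulM\tau$-hypercovering conditions --- has already been extracted in Lemmas~\ref{lem:findiagmodel} and~\ref{lem:Mtauhyp-criterion}.
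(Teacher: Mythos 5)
Your proposal is correct and follows essentially the same route as the paper: apply Lemma~\ref{lem:findiagmodel} to the finite diagram $\Delta_{\leq n}^{\op} \to \sX_{\ulM\tau}$, identify $\mX \times_{\hX} \hU'_m$ with the pullback of the modulus of $\sX'$, and then transfer the hypercovering property via Lemma~\ref{lem:Mtauhyp-criterion} applied over $\sX'$. The paper's own proof is the same two-step argument, stated more tersely.
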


\begin{proof}
By Lem.\ref{lem:findiagmodel}, there exist an abstract admissible blow-up $\sX' \to \sX$ and a simplicial object $\hV_\bullet$ in $\hX'_\tau$ such that $\sV_\bullet \to \sX' \to \sX$ is isomorphic to $\sU_\bullet \to \sX$, where $\sV_\bullet$ is given by $\sV_m := (\hV_m,\mX \times_{\hX} \hV_m)$. 
Since $\sU_\bullet \to \sX$ is an $n$-truncated $\ulM\tau$-hypercovering and since $(\sU_\bullet \to \sX) \cong (\sV_\bullet \to \sX')$, Lem.\ref{lem:Mtauhyp-criterion} shows that $\hV_\bullet \to \hX'$ is an $n$-truncated $\tau$-hypercovering.
\end{proof}

\begin{prop} \label{prop:MZarIscolimZar}
For any modulus pair $\sX$ over $k$ and any presheaf of abelian groups $F \in \PSh(\sX_{\ulM\tau})$, we have
\[ R^n \Gamma_{\ulM\tau}(\sX, F) = \colim_{\sX' \to \sX}  R^n \Gamma_{\tau}(\hX', F|_{\hX'_{\tau}}) \]
for all $n\in \Z$ and $\tau \in \{\Zar, \et\}$, where the colimit is over abstract admissible blowups.
\end{prop}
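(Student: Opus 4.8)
The plan is to compute both sides through hypercoverings and to identify the two resulting filtered systems. Recall Verdier's description of the cohomology of a presheaf: for a presheaf of abelian groups $F$ on a site and an object $Y$, one has $R^n\Gamma(Y,F) \cong \colim_K H^n\big(s(F(K))\big)$, where $K$ runs over the (homotopy) category of hypercoverings of $Y$, $s(F(K))$ is the cochain complex attached to the cosimplicial abelian group $F(K)$, and — crucially for us — it is enough to let $K$ run over $m$-truncated hypercoverings for $m$ depending only on $n$ (see e.g.\ \cite[01GY,~01GF]{stacks-project}). I would apply this both to $(\sX,F)$ in $\sX_{\ulM\tau}$ and to $(\hX',F|_{\hX'_\tau})$ in $\hX'_\tau$, for each abstract admissible blowup $\sX'\to\sX$ in $\ulPSm_k$; here $F|_{\hX'_\tau}$ is the restriction of $F$ along $\hX'_\tau\xrightarrow{\alpha'}\sX'_{\ulM\tau}$, noting that $\sX'_{\ulM\tau}$ has the same image inside $\sX_{\ulM\tau}$ as $\sX_{\ulM\tau}$ itself, since $\sX'\to\sX$ is an abstract admissible blowup (Rec.~\ref{reco3}).

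Next I would build the comparison map. For an $m$-truncated $\tau$-hypercovering $\hU'_\bullet\to\hX'$, Lemma~\ref{lem:Mtauhyp-criterion} says $\sU'_\bullet:=(\hU'_\bullet,\mX\times_{\hX}\hU'_\bullet)$ is an $m$-truncated $\ulM\tau$-hypercovering of $\sX$, and tautologically $s(F(\sU'_\bullet))=s\big(F|_{\hX'_\tau}(\hU'_\bullet)\big)$. Combined with functoriality in $\sX'$ — a refinement $\sX''\to\sX'$ of abstract admissible blowups induces $\hX''_\tau\to\hX'_\tau$ by strict transform, hence transition maps on truncated $\tau$-hypercoverings and on $\tau$-cohomology, compatibly with restriction of $F$ — this yields a natural homomorphism
\[ \colim_{\sX'\to\sX} R^n\Gamma_{\tau}(\hX',F|_{\hX'_\tau}) \longrightarrow R^n\Gamma_{\ulM\tau}(\sX,F), \]
the left-hand colimit being filtered because abstract admissible blowups over $\sX$ form a filtered poset (Rec.~\ref{reco:1}(4)).

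Then I would check this map is an isomorphism by proving surjectivity and injectivity. For surjectivity: any class on the right is represented on some $m$-truncated $\ulM\tau$-hypercovering $\sU_\bullet$ of $\sX$, and Corollary~\ref{cor:truncatedhyp-model} provides an abstract admissible blowup $\sX'\to\sX$ and an $m$-truncated $\tau$-hypercovering $\hU'_\bullet\to\hX'$ with $\sU'_\bullet\cong\sU_\bullet$ over $\sX$; so the class is hit from $R^n\Gamma_\tau(\hX',F|_{\hX'_\tau})$. For injectivity: a class coming from $(\sX',\hU'_\bullet)$ that dies on the right becomes a coboundary after a refinement $\sU''_\bullet\to\sU'_\bullet$ by another $m$-truncated $\ulM\tau$-hypercovering; a morphism of $m$-truncated simplicial objects is a finite diagram in $\sX_{\ulM\tau}$, so Lemma~\ref{lem:findiagmodel} (together with Lemma~\ref{lem:Mtauhyp-criterion}, exactly as in the proof of Corollary~\ref{cor:truncatedhyp-model}) models the whole morphism $\sU''_\bullet\to\sU'_\bullet\to\sX$ by a morphism of $\tau$-hypercoverings over some abstract admissible blowup $\sX''\to\sX'$; pulling back to $\hX''$ the class is a coboundary, hence already zero in the colimit.

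The main obstacle I expect is the homotopy bookkeeping hidden in Verdier's formula: the hypercovering categories are only filtered after inverting simplicial homotopies, so the comparison functor $(\sX',\hU'_\bullet)\mapsto\sU'_\bullet$ must be shown to be homotopy-cofinal — i.e.\ one must also model homotopies (again finite diagrams, so again covered by Lemma~\ref{lem:findiagmodel}) and verify all identifications are natural in $\sX'$ — and one must confirm that the strict-transform transition functors preserve $\tau$-hypercoverings and that the indexing poset is genuinely filtered, both of which reduce to results of \cite{KM21}. Everything else is formal; in particular, no quasi-coherence or Mayer–Vietoris input is needed here, since the argument takes place entirely at the level of hypercoverings.
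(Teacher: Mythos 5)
Your proposal is correct and follows essentially the same route as the paper: both sides are rewritten via Verdier's hypercovering theorem as filtered colimits over truncated hypercoverings, and the two index categories are identified using Lemma~\ref{lem:Mtauhyp-criterion}, Lemma~\ref{lem:findiagmodel} and Corollary~\ref{cor:truncatedhyp-model}. The paper phrases the final step more tersely as a cofinality of index categories (deducing both surjectivity and injectivity at once from Corollary~\ref{cor:truncatedhyp-model}), whereas you spell out the injectivity separately by modelling morphisms of hypercoverings with Lemma~\ref{lem:findiagmodel}; this is a presentational difference only.
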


\begin{proof}
By Verdier's hypercovering theorem, [SGA4, Expose V, Sec.7, Thm.7.4.1], for any category with finite limits $C$ equipped with a finitary\footnote{Finitary means every covering family $\{U_i \to X\}$ admits a finite subfamily which is still a covering family.} topology $\tau$, and additive presheaf of abelian groups $F$ we have
\[ H^n_\tau(C, F) \cong \varinjlim_{U_\bullet \to X} H^n(F(U_\bullet)) \]
where the colimit is the filtered colimit over the category of $m$-truncated hypercoverings of the terminal object $X$ and $m > n$.
In particular, we have
\[
R^n \Gamma_{\ulM\tau}(\sX, F) = \colim_{\sU_\bullet \to \sX} H^n (F(\sU_\bullet)),
\]
and 
\[
\colim_{\sX' \to \sX}R^n \Gamma_{\tau}(\hX', F|_{\hX'_{\tau}}) 
= \colim_{\sX' \to \sX}\colim_{\hU'_\bullet \to \hX'} H^n (F(\sU'_\bullet)),
\]
where 
$\sU_\bullet \to \sX$ runs over $(n+1)$-truncated $\ulM\tau$-hypercoverings of $\sX$, $\hU'_\bullet \to \hX'$ runs over $(n+1)$-truncated  $\tau$-hypercoverings of $\hX'$, and $\sU'_\bullet := (\hU'_i, X^{\prime \infty} \times_{\hX'} \hU'_i)_i$.
Note that there exists a natural morphism 
\[
\colim_{\sX' \to \sX}\colim_{\hU'_\bullet \to \hX'} H^n (F(\sU'_\bullet))
\to
\colim_{\sU_\bullet \to \sX} H^n (F(\sU_\bullet))
\]
since the index category on the left is contained in the one on the right.
It suffices to show the inclusion of the opposite direction, but this is a direct consequence of Corollary~\ref{cor:truncatedhyp-model}. 
\end{proof}

\section{A quasi-coherent cohomology calculation} \label{sec:cohCohCalc}

\begin{prop}[{\cite[Prop.2.1.12]{EGAIII}, \cite[01XT]{stacks-project}}] \label{prop:projCoh}
For any ring $A$, we have
\begin{enumerate}
 \item $H^i(\PP^n_A, \OO(\ast)) = 0$ for $i \neq 0, n$.
 \item The canonical homomorphism of graded rings 
 \[ A[t_0, \dots, t_n] \stackrel{\sim}{\to} H^0(\PP^n_A, \OO(\ast)) \]
 is a bijection.
 \item 
 \[ \tfrac{1}{t_0\dots t_n}A[\tfrac{1}{t_0}, \dots, \tfrac{1}{t_n}] \stackrel{\sim}{\to} H^n(\PP^n_A, \OO(\ast)) \]
where the morphism sends an element on the left to the corresponding section of the \v{C}ech cohomology with respect to the standard covering, and the left hand side has the standard grading. In particular, the highest degree nonzero elements are $\tfrac{a}{t_0\dots t_n}$, for $a \in A \setminus \{0\}$, and these have degree $-n-1$.  
\end{enumerate}
\end{prop}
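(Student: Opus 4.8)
The plan is to reduce everything to an explicit \v{C}ech computation for the standard affine cover. Write $S = A[t_0,\dots,t_n]$ with $\deg t_i = 1$, put $U_i = D_+(t_i) \subseteq \PP^n_A$ and $\mathcal U = \{U_0,\dots,U_n\}$, and set $\OO(\ast) := \bigoplus_{d\in\ZZ}\OO(d)$. Every finite intersection $U_{i_0\cdots i_p}$ is affine---it is the spectrum of the degree-zero part of the localisation $S_{t_{i_0}\cdots t_{i_p}}$---so the quasi-coherent sheaf $\OO(\ast)$ has vanishing higher cohomology on all of them, and the standard comparison of \v{C}ech and sheaf cohomology for such a cover gives $H^p(\PP^n_A,\OO(\ast)) \cong \check H^p(\mathcal U, \OO(\ast))$ for every $p$. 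Collecting all twists at once, the alternating \v{C}ech complex is
\[
C^\bullet:\quad \bigoplus_{i} S_{t_i} \longrightarrow \bigoplus_{i<j} S_{t_it_j} \longrightarrow \cdots \longrightarrow S_{t_0\cdots t_n}
\]
in cohomological degrees $0,\dots,n$, where $S_{t_{i_0}\cdots t_{i_p}}$ is the free $A$-module on the Laurent monomials $t^\alpha$ with $\alpha_j\ge 0$ for all $j\notin\{i_0,\dots,i_p\}$.

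The key step is to identify the \emph{augmented} complex $\widetilde C^\bullet = \bigl(\,S \to \bigoplus_i S_{t_i} \to \cdots \to S_{t_0\cdots t_n}\,\bigr)$, placed in degrees $0,\dots,n{+}1$, with the $A$-linear tensor product
\[
\widetilde C^\bullet \;\cong\; \bigotimes_{i=0}^{n}\bigl[\,A[t_i] \hookrightarrow A[t_i,t_i^{-1}]\,\bigr],
\]
each factor being a two-term complex in degrees $0,1$; indeed the degree-$k$ part of the right-hand side is $\bigoplus_{|I|=k}\bigl(\bigotimes_{i\in I}A[t_i,t_i^{-1}]\bigr)\otimes\bigl(\bigotimes_{i\notin I}A[t_i]\bigr)$, which is precisely $\bigoplus_{|I|=k}S_{t_I}$, with matching differentials. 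Each factor $[A[t_i]\hookrightarrow A[t_i,t_i^{-1}]]$ is a complex of free $A$-modules whose only nonzero cohomology is $H^1 \cong A[t_i,t_i^{-1}]/A[t_i] \cong t_i^{-1}A[t_i^{-1}]$, again free over $A$. Since all modules in sight are $A$-free, K\"unneth applies without $\mathrm{Tor}$-terms, so $H^\bullet(\widetilde C^\bullet)$ is the tensor product of the cohomologies of the factors: it is concentrated in degree $n{+}1$, where it equals $\bigotimes_{i=0}^n t_i^{-1}A[t_i^{-1}] = \tfrac{1}{t_0\cdots t_n}A[t_0^{-1},\dots,t_n^{-1}]$.

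Translating this back to $C^\bullet$ yields the three assertions. Vanishing of $H^p(\widetilde C^\bullet)$ for $p\le n$ together with a short diagram chase gives the natural isomorphism $A[t_0,\dots,t_n] \xrightarrow{\sim} \check H^0(\mathcal U,\OO(\ast))$, which is (2), and $\check H^p(\mathcal U,\OO(\ast)) \cong H^{p+1}(\widetilde C^\bullet) = 0$ for $1\le p\le n{-}1$; since $C^\bullet$ is supported in degrees $0,\dots,n$, this gives (1). Finally $\check H^n(\mathcal U,\OO(\ast)) = H^{n+1}(\widetilde C^\bullet) = \tfrac{1}{t_0\cdots t_n}A[t_0^{-1},\dots,t_n^{-1}]$, and by construction this identification is exactly the one through \v{C}ech cohomology for the standard cover; the grading statement is read off the monomials, those $t^\alpha$ with every $\alpha_i\le -1$ having degree $\le -(n{+}1)$ and equality forcing $\alpha=(-1,\dots,-1)$. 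This is (3).

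There is no serious obstacle here: the content is the combinatorial identification of $\widetilde C^\bullet$ as a tensor product of two-term complexes and the bookkeeping of the grading. The one point that genuinely requires care---and the reason the statement is valid over an arbitrary base ring $A$ rather than only over a field or a domain---is that every $A$-module occurring (the localisations $S_{t_I}$ and the cokernels $t_i^{-1}A[t_i^{-1}]$) is free, so the K\"unneth step introduces no higher $\mathrm{Tor}$; and one should record that the \v{C}ech computation computes genuine sheaf cohomology because the cover and all of its finite intersections are affine while the sheaf is quasi-coherent. Alternatively, one may simply invoke \cite[Prop.~2.1.12]{EGAIII} or \cite[01XT]{stacks-project}.
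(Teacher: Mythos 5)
Your proof is correct. The paper itself gives no argument for this proposition---it is quoted directly from \cite[Prop.~2.1.12]{EGAIII} and \cite[01XT]{stacks-project}---and your \v{C}ech computation (with the augmented complex packaged as a tensor product of the two-term complexes $[A[t_i]\hookrightarrow A[t_i,t_i^{-1}]]$ and a Tor-free K\"unneth argument) is a correct, self-contained version of exactly the computation those references carry out, with all the points needing care (affineness of the intersections, freeness over an arbitrary base $A$, the grading bookkeeping) properly attended to.
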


\begin{prop}[{cf.[SGA6, VII, Lem.3.5]}] \label{prop:buCohLine}
Let $k$ be a ring and write $\AA^n := \AA^n_k$ for all $n \geq 0$.
Let $f: B_{n+1} = Bl_{\AA^{n+1}}\{0\} \to \AA^{n+1}$ be the blowup of affine $(n+1)$-space at the origin, and let $\mathcal{O}(1)$ be the line bundle associated to the exceptional divisor. Set $\mathcal{O}(i) := \mathcal{O}(1)^{\otimes i}$ for all $i \in \Z$.
Then $f_*\OO(i)$ is the coherent sheaf associated to $I^i$ where $I$ is the ideal of the origin, and we set $I^i := \Gamma(\AA^{n+1}, \OO_{\AA^{n+1}})$ for $i < 0$.
Moreover, we have
\[ R^qf_*(\OO(i)) = 0 \]
for all $q > 0$ and $i > {-n-1}$.
\end{prop}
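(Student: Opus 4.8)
The plan is to pass to the projective-bundle description of the blowup. As recalled in Example~\ref{exam:counterGabber}, the blowup $B_{n+1} = \Bl_{\AA^{n+1}}\{0\}$ carries a retraction $\pi \colon B_{n+1} \to \PP^n$ onto the exceptional divisor $E$, which identifies $B_{n+1}$ with the relative $\uSpec$ of $\bigoplus_{j\geq 0}\OO_{\PP^n}(j)$; in particular $\pi$ is affine and $\pi_*\OO_{B_{n+1}} = \bigoplus_{j\geq 0}\OO_{\PP^n}(j)$. With the convention of Proposition~\ref{prop:BasicBlowupInvariant} one has $\OO(1) = \OO_{B_{n+1}}(-E)$, and since $\OO_{B_{n+1}}(-E) = \pi^*\OO_{\PP^n}(1)$, we get $\OO(i) = \pi^*\OO_{\PP^n}(i)$ for every $i \in \ZZ$. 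By the projection formula $\pi_*\OO(i) = \OO_{\PP^n}(i) \otimes \pi_*\OO_{B_{n+1}} = \bigoplus_{j\geq 0}\OO_{\PP^n}(i+j)$, and $R^q\pi_* = 0$ for $q > 0$ as $\pi$ is affine.

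First I would reduce to projective space: since $\AA^{n+1}$ is affine, $R^qf_*\OO(i)$ is the quasi-coherent sheaf associated to $H^q(B_{n+1},\OO(i))$, and composing the two affine maps gives $H^q(B_{n+1},\OO(i)) = H^q(\PP^n,\pi_*\OO(i)) = \bigoplus_{j\geq 0}H^q(\PP^n,\OO_{\PP^n}(i+j))$. Now I would feed this into Proposition~\ref{prop:projCoh} over the base ring $k$. Part (1) gives $H^q(\PP^n,\OO_{\PP^n}(m)) = 0$ for $0 < q < n$ with no restriction on $i$; and for $q = n$, part (3) shows $H^n(\PP^n,\OO_{\PP^n}(m)) = 0$ whenever $m \geq -n$. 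Since $i > -n-1$ and $j \geq 0$ imply $i+j \geq -n$, every summand in the $q=n$ case vanishes too. This gives $R^qf_*\OO(i) = 0$ for all $q > 0$ and $i > -n-1$.

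For the pushforward itself, Proposition~\ref{prop:projCoh}(2) gives $H^0(B_{n+1},\OO(i)) = \bigoplus_{j\geq 0}H^0(\PP^n,\OO_{\PP^n}(i+j)) = \bigoplus_{j\geq 0}(k[t_0,\dots,t_n])_{i+j}$, where negatively graded pieces are zero. When $i \geq 0$ this is $\bigoplus_{m\geq i}(k[t_0,\dots,t_n])_m$, which is exactly the ideal power $I^i$ for $I = (t_0,\dots,t_n)$ since the polynomial ring is generated in degree one; when $i < 0$ it is all of $k[t_0,\dots,t_n] = \Gamma(\AA^{n+1},\OO)$, matching the convention $I^i := \Gamma(\AA^{n+1},\OO)$ for $i<0$. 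Finally I would note that this identification is $k[t_0,\dots,t_n]$-linear: for $i \geq 0$ the line bundle $\OO(i) = \OO_{B_{n+1}}(-iE)$ is the inverse-image ideal sheaf $I^i\cdot\OO_{B_{n+1}}$, so the isomorphism is induced by the pullback map $\Gamma(\AA^{n+1},\OO) \to \Gamma(B_{n+1},\OO_{B_{n+1}})$. (Both $I^i$ and $\Gamma(\AA^{n+1},\OO)$ are finitely generated, so $f_*\OO(i)$ is indeed coherent.)

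The whole argument is bookkeeping on top of Proposition~\ref{prop:projCoh}; the only steps requiring attention are pinning down the twist so that $\OO(1)$ really is $\pi^*\OO_{\PP^n}(1)$ and not $\pi^*\OO_{\PP^n}(-1)$ (a sign error here would shift the vanishing range), and confirming the module structure in the last step. Neither is a genuine obstacle.
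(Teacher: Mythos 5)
Your proof is correct, but it takes a genuinely different route from the paper's for both halves of the statement. For the higher direct images the paper twists the ideal-sheaf sequence of the exceptional divisor, $0\to\OO(i+1)\to\OO(i)\to\phi_*\OO_{\PP^{n}}(i)\to 0$, invokes Serre vanishing for $i\gg 0$, and then runs a descending induction on $i$, using $H^q(\PP^n,\OO(i))=0$ for $q>0$, $i>-n-1$ at each step; you instead use the affine projection $\pi\colon B_{n+1}\to\PP^n$ (the total-space description of the blowup already used in Example~\ref{exam:counterGabber}), the identity $\OO(i)=\pi^*\OO_{\PP^n}(i)$, and the projection formula to get $H^q(B_{n+1},\OO(i))=\bigoplus_{j\geq 0}H^q(\PP^n,\OO_{\PP^n}(i+j))$ in one shot. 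For $f_*\OO(i)$ the paper does an explicit monomial computation on the standard charts $U_k$, whereas you read the answer off the grading of $\pi_*\OO_{B_{n+1}}=\bigoplus_{j\geq 0}\OO_{\PP^n}(j)$. Your argument is shorter, treats $q=0$ and $q>0$ uniformly, and gives the exact value of $R^nf_*\OO(i)$ for $i\leq -n-1$ as a bonus; the paper's exact-sequence-plus-Serre-vanishing argument is the one that survives in situations where the blowup is not globally an affine bundle over the exceptional divisor. You are right that the twist convention is the only real pitfall, and your normalisation $\OO(1)=\OO(-E)=\pi^*\OO_{\PP^n}(1)$ is the one forced by the claim $f_*\OO(i)=\widetilde{I^i}$ and is consistent with the paper's exact sequence. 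One shared caveat: both your step ``negatively graded pieces are zero'' and the paper's assertion that $f_*\OO(i)=\OO_{\AA^{n+1}}$ for $i<0$ implicitly assume $n\geq 1$ (for $n=0$ one has $H^0(\PP^0,\OO(m))=k$ for all $m$, and the blowup is an isomorphism with $f_*\OO(i)=t^i k[t]$); this degenerate case plays no role in the applications.
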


\begin{proof}
The statement about global sections follows from a direct calculation. Indeed, if $\AA^n = \Spec(k[t_0, \dots, t_n])$, then on the $k$th standard open \[U_k := \Spec(k[\tfrac{t_0}{t_k}, \dots, t_k, \dots, \tfrac{t_n}{t_k}]) \subset B_{n+1},\] the line bundle $\OO(i)$ is the free sub-$k$-module of $k[t_0, \dots, t_n, t_0^{-1}, \dots, t_n^{-1}]$ generated by monomials $t_0^{r_0} \dots t_n^{r_n}$ such that $r_j \geq 0$ for $j \neq k$, and $r_k \geq i - \sum_{j \neq k} r_j$. The intersection $\bigcap_{k = 0}^n \OO(i)(U_k)$ of these groups is the free abelian group generated by monomials $t_0^{r_0} \dots t_n^{r_n}$ subject to the condition $r_0, \dots, r_n \geq 0$ if $i \leq 0$, and subject to the further condition $\sum_j r_j \geq i$ if $i \geq 0$. Hence, the claim in the statement.

Next we prove the vanishing assertion. 
Since $R^qf_*(\mathcal{O}(i))$ is a coherent sheaf on $\AA^{n+1}$, it suffices to show that its global section vanishes. 
Consider the short exact sequences
\[ 0 \to \OO_{B_{n+1}}(i+1) \to \OO_{B_{n+1}}(i) \to \phi_*(\OO_{\PP^{n}}(i)) \to 0 \]
where $\phi: \PP^{n} \hookrightarrow B_{n+1}$ is the canonical inclusion of the exceptional divisor. 
\[
\cdots \to R^{q} f_*\OO_{B_{n+1}}(i+1) \to R^{q} f_*\OO_{B_{n+1}}(i) \to R^{q} f_*\phi_*(\OO_{\PP^{n}}(i)) \to \cdots.
\]
Moreover, noting that $\phi$ is an affine morphism, we have \[R^{q} f_*\phi_*(\OO_{\PP^{n}}(i)) = H^q (\PP^n ,\OO_{\PP^{n}}(i)),\] which vanishes when $i > -(n+1)$ by Prop.\ref{prop:projCoh}(3).
By Serre vanishing for proper morphisms (which is valid for any noetherian base, see \cite[Prop.~2.6.1]{EGAIII}, \cite[Lem.~0B5U]{stacks-project}), since $\OO(1)$ is ample, there is some $N$ such that $R^qf_*(\OO_{B_{n+1}}(i)) = 0$ for all $i \geq N$ and $q > 0$.
Therefore, descending induction starting with $i = N$ shows that $R^qf_*\OO_{B_{n+1}}(i) = 0$ for $q > 0$ and $i > {-(n+1)}$.
\end{proof}

\bibliographystyle{halpha-abbrv}
\bibliography{bib}

\begin{thebibliography}{KMSY21b}
\expandafter\ifx\csname url\endcsname\relax
  \def\url#1{\texttt{#1}}\fi
\expandafter\ifx\csname doi\endcsname\relax
  \def\doi#1{\burlalt{doi:#1}{http://dx.doi.org/#1}}\fi
\expandafter\ifx\csname urlprefix\endcsname\relax\def\urlprefix{URL }\fi
\expandafter\ifx\csname href\endcsname\relax
  \def\href#1#2{#2}\fi
\expandafter\ifx\csname burlalt\endcsname\relax
  \def\burlalt#1#2{\href{#2}{#1}}\fi

\bibitem[AT19]{AT19}
D.~Abramovich and M.~Temkin.
\newblock Functorial factorization of birational maps for qe schemes in
  characteristic 0.
\newblock {\em Algebra Number Theory}, 13(2):379--424, 2019.
\newblock \doi{10.2140/ant.2019.13.379}.

\bibitem[Bar77]{Bar77}
W.~Barth.
\newblock Some properties of stable rank-2 vector bundles on $\mathbb{P}^n$.
\newblock {\em Mathematische Annalen}, 226:125--150, 1977.

\bibitem[BGI67]{SGA6}
P.~Berthelot, A.~Grothendieck, and L.~Illusie.
\newblock {\em Th\'eorie des intersections et th\'eor\`eme de Riemann-Roch}.
\newblock S\'eminaire de G\'eom{\'e}trie Alg\'ebrique du Bois Marie. 1966-67.

\bibitem[BPO22]{BPO22}
F.~Binda, D.~Park, and P.~A. {\O}stv{\ae}r.
\newblock Triangulated categories of logarithmic motives over a field.
\newblock {\em Ast\'{e}risque}, (433):ix+267, 2022.
\newblock \doi{10.24033/ast}.

\bibitem[D'A23]{Dad23}
M.~D'Addezio.
\newblock Edged crystalline cohomology.
\newblock {\em In preparation}, 2023.

\bibitem[Gro63]{EGAIII}
A.~Grothendieck.
\newblock \'{E}l\'{e}ments de g\'{e}om\'{e}trie alg\'{e}brique. {III}.
  \'{E}tude cohomologique des faisceaux coh\'{e}rents. {II}.
\newblock {\em Inst. Hautes \'{E}tudes Sci. Publ. Math.}, (17), 1963.

\bibitem[Hir64]{Hir64}
H.~Hironaka.
\newblock Resolution of singularities of an algebraic variety over a field of
  characteristic zero. {I}, {II}.
\newblock {\em Ann. of Math. (2) {\bf 79} (1964), 109--203; ibid. (2)},
  79:205--326, 1964.
\newblock \doi{10.2307/1970547}.

\bibitem[KelMiy21]{KM21}
S.~Kelly and H.~Miyazaki.
\newblock Modulus sheaves with transfers, 2021,
  \burlalt{arXiv:2106.12837}{http://arxiv.org/abs/arXiv:2106.12837}.

\bibitem[KMSY20]{kmsy3}
B.~Kahn, H.~Miyazaki, S.~Saito, and T.~Yamazaki.
\newblock Motives with modulus, {III}: Triangulated categories of motives with
  modulus over a field.
\newblock https://arxiv.org/abs/2011.11859, 2020.

\bibitem[KMSY21a]{kmsy1}
B.~Kahn, H.~Miyazaki, S.~Saito, and T.~Yamazaki.
\newblock {Motives with modulus, {I}: {M}odulus sheaves with transfers for
  non-proper modulus pairs}.
\newblock {\em {{\'E}pijournal de G{\'e}om{\'e}trie Alg{\'e}brique}}, {Volume
  5}, Jan 2021.
\newblock \doi{10.46298/epiga.2021.volume5.5979}.

\bibitem[KMSY21b]{kmsy2}
B.~Kahn, H.~Miyazaki, S.~Saito, and T.~Yamazaki.
\newblock {Motives with modulus, {II}: {M}odulus sheaves with transfers for
  proper modulus pairs}.
\newblock {\em {{\'E}pijournal de G{\'e}om{\'e}trie Alg{\'e}brique}}, {Volume
  5}, Jan 2021.
\newblock \doi{10.46298/epiga.2021.volume5.5980}.

\bibitem[KSY22]{BKS22}
B.~Kahn, S.~Saito, and T.~Yamazaki.
\newblock Reciprocity sheaves, {II}.
\newblock {\em Homology, Homotopy and Applications}, 24(1):71--91, 2022.

\bibitem[KSYR16]{KSY16}
B.~Kahn, S.~Saito, T.~Yamazaki, and K.~R\"ulling.
\newblock Reciprocity sheaves, with two appendices by {K}ay {R}\"ulling.
\newblock {\em Compositio Mathematica}, 152(9):1851--1898, 2016.
\newblock \urlprefix\url{https://doi.org/10.1112/S0010437X16007466}.

\bibitem[Liu02]{Liu02}
Q.~Liu.
\newblock {\em Algebraic {G}eometry and {A}rithmetic Curves}, volume~6.
\newblock Oxford University Press Oxford, 2002.

\bibitem[Mil80]{Mil80}
J.~S. Milne.
\newblock {\em \'{E}tale cohomology}.
\newblock Princeton Mathematical Series, No. 33. Princeton University Press,
  Princeton, N.J., 1980.

\bibitem[Miy20]{nistopmod}
H.~Miyazaki.
\newblock Nisnevich topology with modulus.
\newblock {\em Ann. K-Theory}, 5(3):581--604, 2020.
\newblock \doi{10.2140/akt.2020.5.581}.

\bibitem[RS21]{RS18}
K.~R{\"u}lling and S.~Saito.
\newblock Reciprocity sheaves and abelian ramification theory.
\newblock {\em Journal of the Institute of Mathematics of Jussieu}, pages
  1--74, 2021, \burlalt{1812.08716}{http://arxiv.org/abs/1812.08716}.

\bibitem[Ser88]{SerreAGCF}
J.-P. Serre.
\newblock {\em Algebraic Groups and Class Fields}.
\newblock Graduate texts in mathematics. Springer-Verlag, 1988.

\bibitem[SGA72]{SGA41}
{\em Th\'{e}orie des topos et cohomologie \'{e}tale des sch\'{e}mas. {T}ome 1:
  {T}h\'{e}orie des topos}.
\newblock Lecture Notes in Mathematics, Vol. 269. Springer-Verlag, Berlin-New
  York, 1972.
\newblock S\'{e}minaire de G\'{e}om\'{e}trie Alg\'{e}brique du Bois-Marie
  1963--1964 (SGA 4), Dirig\'{e} par M. Artin, A. Grothendieck, et J. L.
  Verdier. Avec la collaboration de N. Bourbaki, P. Deligne et B. Saint-Donat.

\bibitem[{Sta}18]{stacks-project}
T.~{Stacks Project Authors}.
\newblock \textit{Stacks Project}.
\newblock \url{https://stacks.math.columbia.edu}, 2018.

\bibitem[SV96]{SVSing}
A.~Suslin and V.~Voevodsky.
\newblock Singular homology of abstract algebraic varieties.
\newblock {\em Invent. Math.}, 123(1):61--94, 1996.
\newblock \doi{10.1007/BF01232367}.

\bibitem[Tem08]{Tem08}
M.~Temkin.
\newblock Desingularization of quasi-excellent schemes in characteristic zero.
\newblock {\em Adv. Math.}, 219(2):488--522, 2008.
\newblock \doi{10.1016/j.aim.2008.05.006}.

\bibitem[Tem11]{Tem11}
M.~Temkin.
\newblock Relative {R}iemann-{Z}ariski spaces.
\newblock {\em Israel Journal of Mathematics}, 185(1):1, 2011.

\bibitem[Voe96]{Voe96}
V.~Voevodsky.
\newblock Homology of schemes.
\newblock {\em Selecta Math. (N.S.)}, 2(1):111--153, 1996.
\newblock \doi{10.1007/BF01587941}.

\bibitem[Voe00]{VoeTri}
V.~Voevodsky.
\newblock Triangulated categories of motives over a field.
\newblock In {\em Cycles, transfers, and motivic homology theories}, volume 143
  of {\em Ann. of Math. Stud.}, pages 188--238. Princeton Univ. Press,
  Princeton, NJ, 2000.

\end{thebibliography}

\printindex
\printindex[not]

\end{document}